\documentclass{amsart}
\usepackage[foot]{amsaddr}
\usepackage[english]{babel}
\usepackage[utf8]{inputenc}
\usepackage{listings} 
\usepackage{graphicx}
\usepackage{dsfont}
\usepackage{amsmath,amssymb,amsthm}
\usepackage{color}   
\usepackage{cancel} 
  
% \usepackage{refcheck} % descomentar para encontrar los labels no usados

%% numeración automática y hyperreferencias
%\usepackage{hyperref}
%\usepackage{cleveref}
%\usepackage{autonum}
%\makeatletter
%\newcommand{\restore@Environment}[1]{%
%  \AtBeginDocument{%
%    \csletcs{#1*}{#1}%
%    \csletcs{end#1*}{end#1}%
%  }%
%}
%\forcsvlist\restore@Environment{alignat,equation,gather,multline,flalign,align}
%\makeatother

\hoffset=-2.5cm \voffset=-2cm
\textwidth=16cm \textheight=22cm

%\journal{Constructive Approximation}

%\newenvironment{proof}{\smallskip\noindent{{\it Proof.}}\hskip \labelsep}%
%            {\hfill\penalty10000\raisebox{-.09em}{\large\bf\rm $\blacksquare$}\par\medskip}

\theoremstyle{plain}

\newtheorem{thm}{Theorem}
\newtheorem{lem}[thm]{Lemma}
\newtheorem{teo}[thm]{Theorem}
\newtheorem{cor}[thm]{Corollary}
\newtheorem{prop}[thm]{Proposition}

\theoremstyle{definition}
\newtheorem{defi_aux}[thm]{Definition}
\newenvironment{defi}{ \begin{defi_aux}}{%
    \end{defi_aux}\bigskip
}
\newtheorem{ex_aux}[thm]{Example}
\newenvironment{ex}{ \begin{ex_aux}}{%
    \end{ex_aux}\bigskip
}

\theoremstyle{remark}
\newtheorem{rmk_aux}[thm]{Remark}
\newenvironment{rmk}{\bigskip\begin{rmk_aux}}{%
    \end{rmk_aux}\bigskip
}

\newcommand{\R}{\mathds{R}}
\newcommand{\N}{\mathds{N}}
\newcommand{\Z}{\mathds{Z}}
\newcommand{\C}{\mathds{C}}
\newcommand{\liZ}{l_\infty(\Z)}
\newcommand{\espan}[1]{\text{span}\left\{ #1 \right\}}

\newcommand{\lra}{\longrightarrow}
\newcommand{\iinZ}{{i\in\Z}}
\newcommand{\cF}{\mathcal{F}}
\newcommand{\cC}{\mathcal{C}}

\newcommand{\kZ}{{2^{-k}\Z}}

\newcommand{\iZ}{{i\in\Z}}
\newcommand{\Sinf}{S^\infty}
\newcommand{\ze}[2]{\mathcal{Z}\left(#1,#2\right)}
\newcommand{\zek}[3]{\mathcal{Z}\left(#2^{#1},#3\right)}
\newcommand{\norma}[1]{\left\|#1\right\|_\infty}
\newcommand{\de}{\delta_\eta}
\newcommand{\uno}{\mathds{1}}

%\newcommand{\sergio}[1]{{\color{blue}#1}}
%\newcommand{\nosergio}[1]{{\color{red}#1}}

%% tests
%\usepackage{showlabels}
%\usepackage{xcolor}
%\pagecolor[rgb]{0.1,0.1,0.1}
%\color[rgb]{1,1,0}

\begin{document}

%\begin{frontmatter}

\title[Nonlinear stationary subdivision schemes that reproduce trigonometric functions]{Nonlinear stationary subdivision schemes\\that reproduce trigonometric functions}

\author{Rosa Donat} %\corref{cor:author}
\email{donat@uv.es}
%\cortext[cor:author]{Corresponding author. Tel: +34 96 3544727, Fax: +34 96 3543922}
\author{Sergio López-Ure\~na}
\email{sergio.lopez-urena@uv.es}

\date{\today}
\address{Departament de Matem\`atiques.  Universitat de Val\`encia. Doctor Moliner Street 50, 46100 Burjassot, Valencia (Spain).}

\begin{abstract}
In this paper we define a family of nonlinear, stationary,
interpolatory  subdivision schemes
with the capability of reproducing conic shapes including polynomials upto
second order. 
Linear, non-stationary, subdivision schemes do also achieve this goal,
but different conic sections require different refinement rules to guarantee
exact reproduction.
On the other hand,  with our construction, exact reproduction of
different conic shapes can be 
achieved using exactly the same nonlinear scheme.
%{\em    nonlinear trigonometric scheme}, without any previous knowledge on
% $\gamma$.
Convergence, stability, approximation and shape preservation
properties of the new schemes are analyzed. In addition, the conditions to obtain $\cC^1$ limit functions are also studied.

                   % these samples. 
%The {\em trigonometric scheme} is a member of the family  which is
%monotonicity preserving, 
% $\cC^{1}$ smooth, stable and fourth order accurate, when applied to
% smooth strictly monotone data.  

\end{abstract}

\maketitle

\smallskip
\noindent \textbf{Keywords.}
nonlinear subdivision schemes, exponential polynomials, reproduction,
monotonicity preservation, approximation, smoothness. %\sep CAGD.

\smallskip
\noindent \textbf{MSC.} 41A25, 41A30, 65D15, 65D17.

%\end{frontmatter}

\section{Introduction}

Subdivision refinement is a powerful technique for the design and
representation of curves and surfaces. Subdivision algorithms are
simple to implement and extremely well suited for computer
applications, which make them  very attractive to users interested in
geometric modeling and Computer Aided Geometric Design (CAGD). 

As in \cite{DL95,Dyn92,DLL03}, in this paper we consider that a
subdivision scheme $S=(S^k)_{k=0}^\infty$ is a sequence of
operators $S^k:\liZ\lra\liZ$\footnote{$\liZ:=\{(f_i)_\iZ \,: \,f_i\in\R, \, \exists M>0, |f_i|\leq
M\}$ and $\norma{f}:=\sup_{i\in\Z}|f_i|$.} such that for
% associated to $\kZ$. For
 each $f^0\in\liZ$, a sequence $(f^k)_{k=0}^\infty\subset\liZ$
 is
 recursively defined as follows:
$$f^{k+1}:=S^{k}f^{k}\in\liZ, \quad k\geq 0.$$

%\end{defi}
 
We shall restrict our attention to {\em binary}
subdivision
schemes which are \emph{uniform} and \emph{local}, i.e.
there exists $q\geq 0$ such that for any $f \in \liZ$
\begin{equation} \label{eq:psi}
(S^k f)_{2i+j}=\Psi_j^k(f_{i-q},\ldots,f_{i+q}), \qquad j=0,1, \quad \iZ,
\end{equation}
for some $\Psi_j^k:\R^{2q+1}\lra\R$.
  If the functions $\Psi_j^k$ are linear, the level dependent rules
  can be expressed as follows
\begin{equation} \label{eq:Sk-lin}
 %(S^k f)_{2i+j} %=  \sum_{l=-q_k}^q_k a_{2l+j}^k f_{i-l}
%= \sum_{l\in\Z} a_{2(i-l)+j} f_l, \qquad j=0,1,(T^k f)_i
\Psi_j^k(f_{i-q},\ldots,f_{i+q})=% \sum_{l \in \Z} 
\sum_{l=-q}^q a_{j-2l}^k f_{i+l} \qquad \iZ,
\end{equation}
where the sequence $(a^k_i)_\iinZ$ (that has finite support) is the
\emph{mask} of the linear operator $S^k$, and the scheme is called \emph{linear}. If the rules are the
same at all refinement levels $k\geq 0$, then the scheme is said to be
\emph{stationary}, and we simply denote $S^k=S$, $\Psi^k_j = \Psi_j$ and $a^k_i=a_i$, $\forall k \geq 0$.   The data  generated by
binary subdivision schemes are usually associated to the underlying
grids $\kZ$, $k \geq 0$.
 
Subdivision schemes of interest in practical applications need to be
{\em convergent}.
In this paper, we are concerned with the  classical notion of
uniform convergence, which is  relevant in geometric modeling.
\begin{defi} \label{defi:convergent}
A binary subdivision scheme, $S$, is \emph{uniformly convergent} if
\begin{equation} \label{eq:SS_conv}
\forall f^0\in\liZ \quad \exists \Sinf f^0\in\cC(\R) \, : \,
\lim_{k\rightarrow\infty}  \sup_\iZ |f^k_i - (\Sinf f^0)(i2^{-k})|=0.
\end{equation}
\end{defi}
If $S$ is a convergent subdivision scheme,  $\Sinf:\liZ\rightarrow\cC(\R)$ denotes
the operator that
sends any initial data $f^0$ to the continuous limit function
obtained by the subdivision process specified by $S$.  A uniformly
convergent subdivision scheme is $\cC^m$, or $\cC^m$-convergent, if
for any initial data the limit function has continuous derivatives up
to order $m$. 

{\em Interpolatory} subdivision refinement is often used in practical applications. Binary interpolatory subdivision schemes satisfy
\begin{equation} \label{eq:SS_interp}
 f^{k+1}_{2i} = (S^kf^k)_{2i} =f^k_i, \qquad \forall\iZ.
\end{equation}
Hence, they are defined by specifying only the so-called {\em
  insertion rules}, $\Psi^k_1$, that define $f^{k+1}_{2i+1}$ in \eqref{eq:psi}. If $S$ is
interpolatory and convergent then 
\begin{equation} \label{eq:interpolate_initial}
(\Sinf f^0)(i2^{-k})=f_i^k,
\end{equation}
that is, $\Sinf f^0$ interpolates the  data, $f^k$, at each resolution level.
This property may be very 
useful  in  CAGD, where
interpolatory subdivision 
schemes are often used to obtain  specific shapes from an initial
(coarse) set of samples. 

One important property of linear non-stationary subdivision schemes, which
linear stationary subdivision schemes do not have,
is that they can be used to efficiently generate {\em
  conical shapes} (circles, ellipses...) from
a coarse  initial set of samples.
For example,  an initial  representation of
$n$ equidistant  points on
the unit  circle is obtained by considering  $f^0
= (G(i))_\iinZ$  where $G(t)  = (\cos(\gamma t), \sin(\gamma t))$,
with $\gamma=2 \pi/n$. 
 From this initial coarse representation, the circle can be
obtained  by using interpolatory
subdivision schemes  capable of  
{\em reproducing} the trigonometric functions %$G_1(t)=
%\cos(\gamma t)$, %$G_2(t)=
%\sin(\gamma t)$, with $\gamma=2 \pi/n$. 
 The relevant definition on function reproduction is provided below
 for completeness. 
\begin{defi}%{Function reproduction.} 
\label{defi:rep_gen}
A convergent subdivision scheme $S$ \emph{reproduces} the set of continuous functions $\mathcal{V}$ if
\begin{equation} \label{def:F-rep}
 f^0_i = F(i) \quad \forall  \iinZ \quad \Longrightarrow \quad \Sinf f^0 = F, \qquad \forall
 F\in\mathcal{V}.
\end{equation}
\end{defi}

This paper is concerned with the efficient reproduction of conical sections by means of interpolatory
stationary {\em 
  nonlinear} subdivision schemes.
 In particular,  we consider 
 the (finite dimensional) spaces 
\begin{equation} \label{eq:Vngamma}
W_{0,\gamma}=%\begin{cases}
%\espan{1,t, \cdots, t^n, \exp(\gamma t), \exp(-\gamma
%t)}, \qquad  0 \neq \gamma \in \mathbb{R}\cup \imath (-\pi,\pi),
%\qquad \imath = \sqrt{-1},\\ 
\espan{1,  \exp(\gamma t), \exp(-\gamma t)}, \qquad  0 \neq \gamma \in \mathbb{R}\cup \imath (-\pi,\pi),
\qquad \imath = \sqrt{-1}, %& 0 = \gamma\end{cases},
\end{equation}
 which  can be used to represent circles, ellipses and hyperbolic
 functions centered anywhere in the plane. {

%It is well known that
As mentioned before, any linear scheme %to be  used for the purpose of
reproducing   functions in this space 
%the unit circle, from $n$ equally spaced initial samples,
 must  be necessarily  non-stationary.  In addition,  the value of
$\gamma$  must be known in order to determine the level-dependent rules of the non-stationary scheme  (see e.g. \cite{CR11,DLL03} and section
\ref{sec:definition}). In practice, this value is estimated from the samples 
\cite{DLL03}, but the dependence of these  subdivision  
schemes on the value of the parameter $\gamma$ % in \eqref{eq:Vngamma} 
may be seen as a 
drawback when we desire to obtain shapes
composed of different conical sections. In
this case,  several values of the 
parameter  $\gamma$  might have to be estimated from the initial
samples, and different schemes would have to be used for exact
reproduction of the different sections.
There is nowadays a 
relatively large  body of research concerning linear, non-stationary
subdivision   schemes and their
properties (see e.g. \cite {CR11,DL95,DLL03} and references therein). 

In this paper, we shall construct 
%nonlinear stationary rules that accomplish this same task (almost)
%independently of the value of $\gamma$.
%We note that   $W_{0,\gamma}$, $\gamma \in \imath (-\pi, \pi)$ can be
%used   to represent circles in the plane of
%any center and radius.} % i.e. $S^\infty F^0_l = F_l$.  
%In particular, we construct 
and analyze a family of {\em interpolatory, nonlinear,
stationary} subdivision
schemes with the
capability to reproduce\footnote{$\Pi_n$
  is the space of polynomials of degree $n$} $\Pi_{2} \cup W_{0,\gamma}$ for all $\gamma
\in \R \cup \imath[-\frac34\pi,\frac34\pi]$, independently of the
value of  $\gamma$, under rather general conditions.  Hence, our schemes
 provide  a simple and
effective way to obtain  curves composed of different
conic/hyperbolic sections by recursive subdivision, obtaining exact reproduction in each of the conic sections.

 %when $\gamma \neq 0$ \cite{DLL03}. 

The derivation of the nonlinear schemes proposed in this paper is
based on  the {\em   orthogonal rules} that  
annihilate the space $W_{0,\gamma}$. Orthogonal rules
are %described in \cite{DLL03} as 
one possible
way to derive non-stationary subdivision schemes capable of
reproducing spaces of {\em exponential polynomials} (see \cite{DLL03}
or section \ref{sec:expo}). 
%We shall see that, at each refinement level, there
%is a relation between the orthogonal rules of $W_{0,\gamma}$,
% the samples of functions that belong to that space and the
%parameter $\gamma$. These relations, at consecutive refinement levels,
%are used to construct a family of 
%nonlinear subdivision schemes with the desired reproduction properties.

The paper is organized as follows: In sections \ref{sec:preliminaries}
and  \ref{sec:expo} we recall the basic results about convergence of
stationary subdivision schemes, both linear and nonlinear, as well as
the essential ingredients on the spaces of exponential polynomials and
the derivation of the orthogonal rules that are relevant in our
construction.  Sections \ref{sec:definition} and \ref{sec:convergence}
are dedicated to the definition and convergence analysis of the proposed
nonlinear  subdivision schemes. The preservation of monotonicity is
analyzed  in section \ref{sec:mono}. In section
\ref{sec:smoothness} we perform a study of the regularity of the limit
functions obtained from monotone initial data,
following (loosely) some ideas in \cite{Kuijt98}. The approximation
capabilities and the stability are treated in section
\ref{sec:approx}. Section \ref{sec:numeric} shows some numerical
experiments that fully  support our theoretical results. We close in
section \ref{sec:conclusions} with some conclusions ans future perspectives.
 
\section{ Convergence of stationary subdivision schemes.
%:   Preliminaries
\label{sec:preliminaries}}  

In what follows, we  set the
notation for the remainder of the paper and briefly
review those tools used in the analysis of stationary subdivision schemes (linear or not) that are relevant in our development. The
interested reader is referred to \cite{CMD91,Dyn92}
for a more complete description of the  theory of {\em linear} subdivision schemes,
%, i.e. when $\Psi_j^k$ in \eqref{eq:psi} are linear functions
and to \cite{ADLT06,ADS16,DRS04,DL-US17,HO10}
for the relevant theory of nonlinear subdivision schemes that can be written as a nonlinear
perturbation of a convergent linear scheme.
  In the following we
 use the letter $T$ only for linear schemes, while $S$ shall denote a general subdivision scheme (linear or not).

 A well-known necessary
condition for the convergence of a stationary subdivision scheme is that
\begin{equation} \label{eq:sum+sum}
 \sum_\iZ a_{2i} = \sum_\iZ a_{2i+1} = 1.
\end{equation}
The last relation is equivalent to the {\em reproduction of constant
  sequences} property and  implies the existence of the  {\em first difference scheme
  $T^{[1]}$}, characterized by the following property
\begin{equation*} %\label{eq:def-S1}
 \nabla T f = T^{[1]} \nabla f ,\qquad \forall f \, \in \liZ,
\end{equation*}
where $\nabla f = (f_{i+1}-f_i)_\iZ$. 
%In the linear case, the first difference scheme is often used to prove convergence \cite{Dyn92}. In particular, 
Moreover, if $T$ is a linear subdivision scheme satisfying \eqref{eq:sum+sum}, then $T$ is convergent if, and only if,
\begin{equation} \label{eq:Slin-Lmu}
 \exists L>0, \eta<1 : \quad \norma{(T^{[1]} \circ T^{[1]} \circ
\overset{(L)}{\cdots} \circ T^{[1]}) f}\leq \eta \norma{f} \quad \forall f
\in \liZ.
\end{equation}
%This condition is often translated in terms of the symbol of
%$T^{[1]}$, which can be easily deduced using the Z-transform. 
 %It can be easily deduced that for $\mu\geq 1$
%\begin{equation} \label{eq:dif:finit}
%\ze{\nabla^\mu f}{z} = z^{-\mu}(1-z)^\mu\ze{f}{z},
%\end{equation}
%where  $\nabla^\mu=
%\nabla\nabla^{\mu-1}$, $\mu\geq 2$.
% Hence
%and which symbol,, can be easily deduced from \eqref{eq:dif:finit},
%\eqref{eq:prod_z} and the fact that \eqref{eq:sum+sum}
%implies that $a(-1)=0$ ($a(z)$ divisible by $1+z$):
%$$ a^{[1]}(z)= \frac{z a(z)}{1+z}.$$

%\subsection{nonlinear subdivision}
%
% The Z-transform cannot be used in the  nonlinear context,  that
% requires a different set of tools.
In \cite{ADL11,ADLT06,ADS16,DL08,DRS04,DL-US17,HO10},
the authors 
construct and analyze several nonlinear subdivision
schemes that can be described as a (rather specific) nonlinear
perturbation of a convergent linear scheme.  In this paper, we are
interested in %\cite{ADS16}, the authors consider a nonlinear 
subdivision schemes of the form %as follows, 
\begin{equation}  \label{eq:nonlinearSS}
 S f=T f+\cF(  \nabla f),
  \qquad \forall f \in \liZ,
\end{equation}
where $\cF:\liZ\rightarrow
\liZ$ may be a nonlinear operator and $T$ is a linear and
convergent subdivision scheme (so $T^{[1]}$ exists). It is easy to see
that  schemes of the form specified in \eqref{eq:nonlinearSS}  always admit a first-difference
 scheme. Indeed, applying the difference operator, $\nabla$ to
 \eqref{eq:nonlinearSS} we get
\[ \nabla S f= \nabla T f + \nabla \cF (\nabla f)=T^{[1]} \nabla f +
\nabla \cF(\nabla f). \]
 Hence, $S^{[1]}$ is defined as
\begin{equation} \label{eq:nonlinearS1}
 S^{[1]} f= T^{[1]}  f +  \nabla \cF(f), \qquad  \forall f \in \liZ . 
\end{equation}

%When $S$ has the form specified in \eqref{eq:nonlinearSS}
Then, convergence
can be proven using the following 
 result from \cite{ADS16} %(eq. (29), 
%If we assume that $S$ is interpolatory, of the form
%\eqref{eq:nonlinearSS} and it admits a first difference scheme, by
% the comments of section 4.1 of \cite{ADS16} the result can be reformulated as follows 
(notice the similarity with \eqref{eq:Slin-Lmu}).
\begin{teo}%{Theorem 1 of \cite{ADS16}. {\bf(no exactament)}}
 \label{teo:conv1} 
Let $S$ be an interpolatory subdivision scheme of the form
\eqref{eq:nonlinearSS}. 
% of the form (\ref{eq:nonlinearSS}) and it admits a  first
% difference scheme $S^{[1]}$. 
If
\begin{align}
\text{\textbf{C1.}} & \qquad \exists M>0: &  ||\cF(f) ||_{\infty} \leq M\norma{f}, &&   \forall f \in \liZ,  \label{eq:teo:conv1}\\
\text{\textbf{C2.}} & \qquad \exists L>0, \, \, 0<\eta<1 : & \norma{(
  S^{[1]}\circ S^{[1]}\circ 
\overset{L}{\cdots} \circ S^{[1]})(f)}\leq \eta \norma{f}, && \forall f \in
\liZ, \nonumber %\label{eq:teo:conv2}
\end{align}
then $S$ is  convergent. If  $T$ is $\cC^\alpha$, then $S$ is $\cC^{\beta}$ with
$\beta=\min \{\alpha, -\log_2(\eta)/L \}$.  
\end{teo}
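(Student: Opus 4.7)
The plan is to deduce uniform convergence of $S$ from a geometric decay of $\norma{\nabla f^k}$, and then extract the regularity of the limit from the same decay. Since $S$ has the form \eqref{eq:nonlinearSS}, the iterates of $\nabla f^0$ under $S^{[1]}$ are exactly the first differences of $f^k=S^k f^0$. First, I would observe that $S^{[1]}$ is a bounded operator on $\liZ$, in the sense $\norma{S^{[1]} f}\leq C\norma{f}$ for some constant $C>0$ depending only on $T^{[1]}$ and $M$: this follows from \eqref{eq:nonlinearS1}, \textbf{C1}, and the boundedness of the linear operator $T^{[1]}$. Writing $k=qL+r$ with $0\leq r<L$ and applying \textbf{C2} to the $q$-fold composition of $(S^{[1]})^L$ then gives
\[
\norma{\nabla f^k}\,\leq\, C^L\,\eta^{\lfloor k/L\rfloor}\,\norma{\nabla f^0},
\]
so $\norma{\nabla f^k}$ decays geometrically with rate $\mu:=\eta^{1/L}\in(0,1)$.

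The convergence statement then follows from the interpolatory structure. Let $F^k:\R\to\R$ denote the piecewise linear interpolant of $f^k$ on the grid $\kZ$. Because $(Sf)_{2i}=f_i$, the functions $F^k$ and $F^{k+1}$ agree at every node of $\kZ$, so on each dyadic interval of length $2^{-k}$ the sup-norm of $F^{k+1}-F^k$ is attained at the midpoint and controlled by
\[
\bigl| f^{k+1}_{2i+1}-\tfrac12(f^k_i+f^k_{i+1})\bigr|\,\leq\,\tfrac12\,\norma{\nabla f^{k+1}},
\]
which decays geometrically in $k$ by the previous step. Summing a geometric series shows $(F^k)_{k\geq 0}$ is Cauchy in $\cC(\R)$, and its continuous limit $\Sinf f^0$ satisfies \eqref{eq:SS_conv} because $F^k(i2^{-k})=f^k_i$ by construction.

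For the smoothness claim I would follow the perturbation-analysis template of \cite{DRS04,ADS16}: the error $S^k f^0-T^k f^0$ can be controlled through the level-$j$ nonlinear corrections $\cF(\nabla f^j)$ introduced by $S$ at each refinement step, whose sup-norms are bounded by $M\norma{\nabla f^j}\leq MC^L\mu^j\norma{\nabla f^0}$. A standard multiscale argument then converts a geometric decay with rate $2^{-j\beta}$ on dyadic increments into Hölder-$\beta$ regularity of the nonlinear correction to $T^\infty f^0$, with $\beta=-\log_2\mu=-\log_2(\eta)/L$. Combined with the $\cC^\alpha$-regularity of $T^\infty f^0$, this yields $\Sinf f^0\in\cC^{\min\{\alpha,\beta\}}$. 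I expect this last step to be the main technical obstacle: since $\cF$ is nonlinear one cannot literally split $S^k$ into independent linear and nonlinear parts, so extracting Hölder regularity from the accumulated perturbations requires a careful induction across scales, as performed in \cite{ADS16}.
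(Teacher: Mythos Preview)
The paper does not prove this theorem: it is quoted verbatim as a known result from \cite{ADS16} and no argument is given in the text. Consequently there is no ``paper's own proof'' to compare against; your outline is essentially the standard perturbation argument developed in \cite{DRS04,ADS16} that the paper is invoking, and you even cite those references yourself for the regularity step.

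Your sketch is sound. One small slip: the bound
\[
\Bigl| f^{k+1}_{2i+1}-\tfrac12\bigl(f^k_i+f^k_{i+1}\bigr)\Bigr|
=\tfrac12\bigl|\nabla f^{k+1}_{2i}-\nabla f^{k+1}_{2i+1}\bigr|
\le \norma{\nabla f^{k+1}},
\]
so the constant should be $1$ rather than $\tfrac12$; this has no effect on the Cauchy argument. The regularity part is, as you say, the delicate step: the telescoping identity $S^{k}f^0-T^{k}f^0=\sum_{j=0}^{k-1}T^{k-1-j}\bigl(S^{j+1}f^0-T S^{j}f^0\bigr)=\sum_{j=0}^{k-1}T^{k-1-j}\cF(\nabla f^j)$ combined with $\norma{\cF(\nabla f^j)}\le M\norma{\nabla f^j}\lesssim \mu^{j}$ and the $\cC^{\alpha}$-smoothing of $T$ is exactly how \cite{ADS16} extracts $\cC^{\min\{\alpha,-\log_2\mu\}}$ regularity, so deferring to that reference is appropriate here.
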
 

\begin{rmk}
 We write $F\in \cC^{\alpha}$, being $\alpha = n + \kappa$, with $n\in\N$ and $0<\kappa<1$, if $F^{(n)}$ exists and satisfies
$$|F^{(n)}(x)-F^{(n)}(y)| \leq C|x-y|^{\kappa}.$$
%If $\kappa\in (0,1)$, we also write $\cC^{n,\kappa} = \cC^\alpha$, $\alpha = n + \kappa$.

$S$ is $\cC^\alpha$ % $\alpha\geq 0$, 
if $S^\infty f^0 \in \cC^\alpha$ for any $f^0\in\liZ$.
$S$ is $\cC^{\alpha-}$ if $S$ is $\cC^{\alpha-\theta}$ for any $\theta>0$ small enough.
\end{rmk}

%In fact, we shall prove next that the regularity of $T$ does not limit
%the regularity of $S$ in \eqref{eq:nonlinearSS}, when $S^{[1]}$ is
%bounded.
In fact, it turns out that  the regularity of $T$ in
Theorem \ref{teo:conv1}  does not limit the regularity of $S$. 
%, provided $S^{[1]}$ is bounded.
\begin{cor} \label{cor:conv2}
Suppose that $S$ is interpolatory, of the form \eqref{eq:nonlinearSS},
and  satisfies
\text{\textbf{C1, C2}} 
in Theorem \ref{teo:conv1}. Then $S$ is $\cC^\beta$ with $\beta= -\log_2(\eta)/L$.
\end{cor}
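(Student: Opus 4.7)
The plan is to apply Theorem~\ref{teo:conv1} after replacing the underlying linear scheme $T$ in the decomposition \eqref{eq:nonlinearSS} by an auxiliary linear stationary scheme $\tilde T$ of arbitrarily large smoothness $\tilde\alpha > \beta := -\log_2(\eta)/L$. A convenient choice is a cardinal B-spline subdivision scheme of sufficiently large degree; any such $\tilde T$ is convergent, reproduces constants, and therefore admits a first-difference scheme $\tilde T^{[1]}$.

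Since both $T$ and $\tilde T$ satisfy \eqref{eq:sum+sum}, the mask of $T-\tilde T$ has vanishing even- and odd-indexed subsums. An elementary Abel-type rearrangement then produces a bounded, local, linear operator $\tilde{\cG}:\liZ\to\liZ$ with $(T-\tilde T)f = \tilde{\cG}(\nabla f)$ for every $f\in\liZ$. Setting $\tilde{\cF} := \cF + \tilde{\cG}$, one obtains
$$Sf = \tilde T f + \tilde{\cF}(\nabla f), \qquad f\in\liZ,$$
and $\tilde{\cF}$ inherits hypothesis \textbf{C1} from $\cF$ and from the boundedness of $\tilde{\cG}$.

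For hypothesis \textbf{C2}, I would show that the first-difference scheme of the new decomposition, $\tilde S^{[1]} = \tilde T^{[1]} + \nabla\tilde{\cF}$, actually coincides with the original $S^{[1]}$. Applying $\nabla$ to the identity $(T-\tilde T)f = \tilde{\cG}(\nabla f)$ gives $(T^{[1]}-\tilde T^{[1]})(\nabla f) = \nabla\tilde{\cG}(\nabla f)$ for every $f\in\liZ$, so the two linear subdivision schemes $T^{[1]}$ and $\tilde T^{[1]} + \nabla\tilde{\cG}$ agree on $\nabla\liZ$. Since every finitely-supported sequence (in particular the unit impulse $e_0$) belongs to $\nabla\liZ$, their masks must coincide, hence $T^{[1]} = \tilde T^{[1]} + \nabla\tilde{\cG}$ as operators on the whole of $\liZ$. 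Therefore $\tilde S^{[1]} = S^{[1]}$, and \textbf{C2} is preserved verbatim, with the same constants $L$ and $\eta$.

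With \textbf{C1} and \textbf{C2} in place for the decomposition built on the smoother $\tilde T$, Theorem~\ref{teo:conv1} yields $S\in\cC^{\min\{\tilde\alpha,\beta\}}$, which equals $\cC^\beta$ by our choice $\tilde\alpha > \beta$. The delicate step to justify carefully is the telescoping construction of $\tilde{\cG}$ together with the resulting mask equality $T^{[1]} = \tilde T^{[1]} + \nabla\tilde{\cG}$ on all of $\liZ$; once these are in place, the corollary follows immediately from Theorem~\ref{teo:conv1}.
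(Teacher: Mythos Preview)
Your proof is correct and follows the same overall strategy as the paper: rewrite $S$ as a perturbation of an auxiliary linear scheme of arbitrarily high smoothness, check that \textbf{C1} and \textbf{C2} survive the rewriting, and conclude via Theorem~\ref{teo:conv1}. The execution differs in two respects. First, the paper chooses the interpolatory Deslauriers--Dubuc schemes $T_{n,n}$ as auxiliary schemes (whose regularity $\alpha_n\to\infty$), and it obtains the new decomposition concretely by exploiting the interpolatory identity $(Sf)_{2i}=f_i$ to pass through $T_{1,1}$; you instead take B-spline schemes and use an Abel-type telescoping on the mask difference, which is more abstract but does not rely on $S$ (or $\tilde T$) being interpolatory at that step. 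Second, you spell out carefully why \textbf{C2} is unaffected, essentially proving that $S^{[1]}$ is intrinsic to $S$ and independent of the chosen decomposition; the paper tacitly uses this fact. Both routes are sound; yours is a bit more general in spirit (it would apply verbatim with any convergent, constant-reproducing $\tilde T$), while the paper's is slightly more hands-on and avoids the mask/Abel argument by leveraging the interpolatory structure already assumed on $S$.
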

\begin{proof}
If $S$ is of the form \eqref{eq:nonlinearSS}, $S^{[1]}$ is well
defined. Notice that the expression for $S^{[1]}$ in
\eqref{eq:nonlinearS1}, together with the fact that $\cF$ satisfies
property \textbf{C1} in Theorem \ref{teo:conv1}, implies that there exists
$C_S>0 $ such that 
$\norma{S^{[1]}(f)}\leq C_S \norma{f}$.

Using $S^{[1]}$ we can   write
\begin{align*}
(Sf)_{2i+1}&= (Sf)_{2i+1}- (Sf)_{2i}+ (Sf)_{2i}
= \nabla (Sf)_{2i} + f_i = (S^{[1]} \nabla f)_{2i} + f_i \\
(Sf)_{2i+1}&= (Sf)_{2i+2} - (Sf)_{2i+2}+ (Sf)_{2i+1}
= f_{i+1} - \nabla (Sf)_{2i+1} = f_{i+1} - (S^{[1]}\nabla f)_{2i+1} \\
(Sf)_{2i+1}&= \frac12((S^{[1]} \nabla f)_{2i} + f_i) + \frac12(f_{i+1} - (S^{[1]}\nabla f)_{2i+1})
= \frac12 f_i + \frac12 f_{i+1} - \frac12 ((S^{[1]}\nabla f)_{2i+1} -(S^{[1]} \nabla f)_{2i}).
\end{align*} 
Hence, we can write
$$ Sf = T_{1,1} f + \cF_S(\nabla f), \qquad \cF_S(f)_{2i+1} =- \frac12 ((S^{[1]} f)_{2i+1} -(S^{[1]}  f)_{2i}),$$
where $T_{1,1}$ denotes the 2-point Deslauriers-Dubuc scheme.
 
By applying the same argument to $T_{n,n}$, the 2n-point
Deslauriers-Dubuc interpolatory 
subdivision scheme,  we can also write
$$ T_{n,n} = T_{1,1} + \cF_{n,n}(\nabla f).$$
Thus, we have too
$$ Sf = T_{n,n} f + (\cF_S-\cF_{n,n})(\nabla f).$$
Notice that $S^{[1]}$ and $T_{n,n}$ are bounded operators, hence
$\cF_S$ and $\cF_{n,n}$  and  $\cF_S - \cF_{n,n}$ satisfy
condition ${C1}$ in Theorem \ref{teo:conv1}. 

Since  $S$ satisfies condition \textbf{ C2}, its  regularity is  $\cC^\beta$,
$\beta= \min \{ - \frac{\log_{2}(\eta)}{L}, \alpha_n\}$, where
$\alpha_n$ is the regularity of $T_{n,n}$. But $n$ was taken
arbitrarily and
$\alpha_n\overset{n\rightarrow\infty}{\longrightarrow}+\infty$ (see
e.g.  \cite{Donoho92}), then $\beta= - \frac{\log_{2}(\eta)}{L}$.
\end{proof}

\section{ Linear subdivision schemes that reproduce Exponential
  Polynomials. Orthogonal Rules \label{sec:expo}}

The reproduction of certain finite dimensional spaces is always an
important issue in subdivision refinement. In particular, for linear interpolatory subdivision schemes there is a well known
relation between the smoothness of the scheme and the reproduction of 
spaces of polynomials \cite{Dyn92}. 

Conic sections, spirals and other objects  of
interest in geometric
modeling  can be   expressed as combinations of {\em exponential
polynomials}. The reproduction of such (finite-dimensional)
spaces, a very desirable property in CAGD, has been thoroughly studied
in the literature.  The study of
non-stationary subdivision schemes reproducing general spaces of
exponential polynomials was initiated in 2003 in \cite{DLL03}.
We recall that these spaces are associated to the space of solutions
of certain  linear differential operator and refer the reader to
\cite{CR11,DLL03,Ron88} for further information. Here, we
shall only recall the definitions and concepts relevant to our development.

 General  Exponential Polynomial (EP) spaces
are defined in terms of two sets of 
parameters, $\bar \gamma=(\gamma_0,\gamma_1,\ldots,\gamma_\nu)$,
$\gamma_i\neq \gamma_j$, $i \neq j$, $\gamma_i \in \mathbb{C}$ and 
$\bar \mu=(\mu_0,\mu_1,\ldots,\mu_\nu)$, $0\leq \mu_i \in \mathbb{N}$,
for some fixed $\nu\in\N$,
$\nu\geq 0$. In this paper, the associated space of exponential
polynomials  shall be denoted as follows
\begin{equation}\label{eq:exp-poli-gen}
V_{\bar \gamma}^{\bar \mu}= \espan{ t^l \exp(\gamma_n t) :
  l=0,1,\ldots,\mu_n-1, \, n=0,1,\ldots,\nu }.
\end{equation}
Notice that $V^{(1,1,1)}_{(0,\gamma,-\gamma)}=W_{0,\gamma}$, a notation 
 used mainly for simplicity throughout the paper.

%where the analysis and design of linear
%non-stationary subdivision schemes that
%can reproduce a given
%finite-dimensional space of exponential polynomials is described, as
%well as the conditions for existence and uniqueness of
%such schemes.
We remark again that the set of parameters that define a
particular EP space needs to be known in order to determine the
linear, non-stationary, interpolatory subdivision scheme capable of
reproducing the space. In practice,  given an
initial set of samples, 
the parameters that determine the {\em correct} refinement rules are determined by a 
preprocessing step \cite{DLL03}.

As pointed out in \cite{DLL03} (Theorem 2.5), there
 exists a unique \emph{Minimal-Rank (M-R) reproducing} subdivision
 scheme for a given EP space of the form \eqref{eq:exp-poli-gen}.
Its mask can be derived from a linear system of equations (Theorem 2.3 of \cite{DLL03}) or
from the {\em orthogonal rules} of the EP space. These rules
annihilate samples of any function in the desired given EP space and give
rise to the so-called {\em orthogonal schemes}. 
% In \cite{DLL03}, the authors point out that
% one rule of an orthogonal scheme determines, by means of a simple
% down-scale formula, all the rules at coarser resolution levels. In
% addition, the rule of an orthogonal scheme at refinement level $k$
% determines the interpolatory rule at level $k-1$. 

We recall next  the definition of the
 %\begin{defi}%{Z-transform} The
\emph{Z-transform} of a sequence $f=(f_i)_{i\in\Z}$ as the formal Laurent series
\begin{equation*} %\label{eq:Ztransf-def}
\ze{f}{z}=\sum_{i\in\Z} f_i z^i.
\end{equation*}
%\end{defi}
Note that $\ze{\cdot}{z}$ is a linear operator and that  $\ze{f}{z}$
is a well-defined function for $z\in\C$,
$0<|z|<1$, if $f\in\liZ$. In terms of the  Z-transform,  (\ref{eq:Sk-lin}) can be equivalently expressed  as follows,
\begin{equation*} %\label{eq:prod_z}
\ze{T^{k}f}{z} = \ze{a^k}{z} \ze{f}{z^2}.
\end{equation*}
The Laurent polynomial $a^k(z):=\ze{a^k}{z}$ is the {\em symbol} of
$T^k$.

\begin{defi}{Orthogonal Rule.}
A Laurent series $b^k(z)=\sum_\iZ b_i^k z^i$ is an orthogonal rule for the
functional vector space  $\mathcal{V}$ at $\kZ$ if
\begin{equation*} %\label{eq:orto}
b^k(z) \zek{k}{F}{z} = 0, \qquad \forall F\in\mathcal{V}, \quad
F^k:=F|_\kZ.
\end{equation*}
\end{defi}

One of the easiest examples of orthogonal rules 
%to spaces of polynomials 
is obtained from the fact that
  the $\mu$-th finite difference operator annihilates
equal-spaced samples of  $\Pi_{\mu-1}$, the space of  polynomials of
degree up to $\mu-1$. Using the Z-transform on the relation
$$ \nabla^{\mu} P^k = 0, \qquad \forall k\geq 0 , \quad \forall P\in
\Pi_{\mu-1}$$
we obtain
\begin{equation} \label{eq:pol_ort}
 z^{-\mu}(1-z)^{\mu} \zek{k}{P}{z} = 0.
\end{equation}
hence $b(z)=(1-z)^{\mu}$ is an orthogonal rule for $\Pi_{\mu-1}$ for any $\mu \geq 1$.
Using this result, we shall see next that we can easily obtain
orthogonal rules to various EP spaces.

\begin{teo} \label{thm:orto} Let $\bar \gamma\in \C^{\nu+1}$, $\bar \mu \in \N^{\nu+1}$,  with $\gamma_i\neq \gamma_j$ if $i\neq j$. Then
$$ \left( \prod_{n=0}^\nu (1-\exp(2^{-k}\gamma_n)z)^{\mu_n} \right)  \zek{k}{F}{z} = 0, \quad
\forall F\in V_{\bar \gamma}^{\bar \mu}.$$
\end{teo}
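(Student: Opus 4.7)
The plan is to reduce the statement to the polynomial orthogonal rule \eqref{eq:pol_ort} via a ``gauge'' transformation that strips the exponential factor off a sampled basis element and turns multiplication by $(1-\omega z)$ into a backward difference.

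First, by linearity of $\mathcal{Z}(\cdot,z)$ and of the sampling map $F\mapsto F^k$, I would reduce the claim to checking it on an arbitrary basis element $F(t) = t^l \exp(\gamma_n t)$ with $0\le l\le \mu_n-1$ and some fixed $n\in\{0,\ldots,\nu\}$. Writing $\omega_m := \exp(2^{-k}\gamma_m)$, I would next record the elementary fact that multiplication of a formal Laurent series by $(1-\omega z)$ corresponds, at the sequence level, to the operator $(\Delta_\omega f)_i := f_i - \omega f_{i-1}$, i.e.\ $\mathcal{Z}(\Delta_\omega f,\,z) = (1-\omega z)\,\mathcal{Z}(f,\,z)$, since $z$ encodes a unit shift.

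The core step would be to strip off the exponential factor and land in the polynomial setting. Define $G_i := \omega_n^{-i} F^k_i = (i\,2^{-k})^l$, a polynomial in $i$ of degree $l$. A short induction on $m$, based on $\omega_n\cdot\omega_n^{i-1}=\omega_n^i$, shows
\begin{equation*}
(\Delta_{\omega_n}^m F^k)_i \;=\; \omega_n^i\,(\nabla_-^m G)_i,\qquad (\nabla_- G)_i := G_i - G_{i-1}.
\end{equation*}
Since $\deg G = l \le \mu_n-1 < \mu_n$, taking $m=\mu_n$ yields $\Delta_{\omega_n}^{\mu_n} F^k \equiv 0$ by the polynomial annihilation already established in \eqref{eq:pol_ort} (applied in its backward-difference form, which is equivalent).

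To conclude, since the Laurent polynomials $(1-\omega_m z)^{\mu_m}$ commute under multiplication, I can pull the matching factor $(1-\omega_n z)^{\mu_n}$ to the front of $b^k(z)\,\zek{k}{F}{z}$ and absorb it into the sequence:
\begin{equation*}
b^k(z)\,\zek{k}{F}{z} \;=\; \Big(\prod_{m\ne n}(1-\omega_m z)^{\mu_m}\Big)\,\mathcal{Z}\!\left(\Delta_{\omega_n}^{\mu_n} F^k,\;z\right) \;=\; 0.
\end{equation*}
The only genuinely delicate point is the inductive identity relating $\Delta_{\omega_n}^m$ to $\nabla_-^m$ through the gauge $G_i = \omega_n^{-i}F^k_i$; once that is in hand, everything else is coefficient-wise formal manipulation, and the distinctness hypothesis on the $\gamma_n$'s is not even needed for this direction.
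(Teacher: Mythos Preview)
Your proof is correct and follows the same strategy as the paper: reduce to a single exponential factor, strip it off to land in the polynomial setting, apply \eqref{eq:pol_ort}, and then combine across the $\gamma_n$'s by linearity and commutativity of the factors. The only cosmetic difference is that the paper performs the stripping step at the $Z$-transform level---observing $\mathcal{Z}(F^k,z)=\mathcal{Z}(P^k,\omega_n z)$ and substituting $z\mapsto\omega_n z$ directly into \eqref{eq:pol_ort}---rather than via your sequence-level induction relating $\Delta_{\omega_n}^m$ to $\nabla_-^m$; your observation that the distinctness hypothesis on the $\gamma_n$ is not actually used is likewise true of the paper's argument.
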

\begin{proof}
Consider first the  case $\nu=0$, $\gamma_0=\gamma$, $\mu_0=\mu$. In
this case, functions in $ V_\gamma^\mu$ are of the form  $F(t)=\exp(\gamma
t)P(t)$, with $P(t) \in \Pi_{\mu-1}$.
Then $F^k_i = \exp(2^{-k}i \gamma)P(2^{-k}i)$, and we have 
$$\zek{k}{F}{z} = \sum_\iZ \exp(2^{-k}\gamma i)P(2^{-k}i)z^i =
\sum_\iZ P(2^{-k}i) (\exp(2^{-k}\gamma)z)^i =
\zek{k}{P}{\exp(2^{-k}\gamma)z},$$
with $P^k=(P(2^{-k}i))_\iinZ$.
Replacing $z$ by $\exp(2^{-k}\gamma)z$ in (\ref{eq:pol_ort}) we obtain
$$ 0 = (1-\exp(2^{-k}\gamma)z)^{\mu} \zek{k}{P}{\exp(2^{-k}\gamma)z} = (1-\exp(2^{-k}\gamma)z)^{\mu} \zek{k}{F}{z}.$$

Now, for $\nu>0$, $F \in V_{\bar \gamma}^{\bar \mu}$ can be written as
$$F(t)=\sum_{n=0}^\nu F_n(t), \qquad F_n\in V_{\gamma_n}^{\mu_n}= \espan { t^l \exp(\gamma_n t) : l=0,\ldots,\mu_n-1}.$$

Since (by the case $\nu=0$) 
$$ (1-\exp(2^{-k}\gamma_n)z)^{\mu_n} \zek{k}{F_n}{z} = 0$$
we have
$$\left(\prod_{n=0}^\nu (1-\exp(2^{-k}\gamma_n)z)^{\mu_n}\right)
\zek{k}{F_m}{z}=0, \qquad m=0,\ldots, \nu$$
hence
$$0=\left( \prod_{n=0}^\nu (1-\exp(2^{-k}\gamma_n)z)^{\mu_n} \right)\sum_{m=0}^\nu \zek{k}{F_m}{z}=\left( \prod_{m=0}^\nu (1-\exp(2^{-k}\gamma_m)z)^{\mu_m} \right)\zek{k}{F}{z}.$$
%By the linearity of the Z-transform,
%$$\zek{k}{F}{z} = \sum_{i=0}^\nu \zek{k}{F_i}{z}.$$
\end{proof}

The next example shows how the orthogonal rules of an EP space
can be used to 
define a linear, non-stationary,  subdivision
scheme reproducing the given EP space. It also serves to examine 
a relation between the  non-stationary linear scheme obtained and
a {\em stationary} nonlinear rule, that is essentially
equivalent to the non-stationary refinement rules for data sampled in the given EP space. This derivation will be useful, in section \ref{sec:definition}, in the
construction of our new family of subdivision schemes.
 
%To better understand the outline to deduce the new nonlinear scheme presented in this paper, we firstly consider a simpler example where, using the same arguments as latter, we deduce a nonlinear stationary scheme from a linear non-stationary one. This simpler scheme is, for instance, able to reproduce any circle centered in the origin, independently of the number points of the circle used in the starting data $f^0$.

%For the shake of completeness, we illustrate in the following example
%how to construct the mentioned non-stationary subdivision schemes
%with the reproduction property. It could be deduced solving a linear
%system, using the theory of \cite{DLL03} or
%\cite{CR11}, but to getting familiar with
%the orthogonal series we use them in the construction. 

\begin{ex} \label{ex:1}
Circles, hyperbolas and ellipses, centered at the origin,
can be drawn using functions in the EP space (for suitable values of
the parameter $\gamma$)
$$V_{(\gamma, -\gamma)}^{(1,1)}:=\espan{\exp{(\gamma t)},\exp{(-\gamma t)}}.$$

%To derive the non-stationary scheme that reproduces this EP space, we use
%  Theorem \ref{thm:orto} to obtain  that
Using Theorem \ref{thm:orto}, we obtain that $\forall F \in V_{(\gamma,-\gamma)}^{(1,1)}$ 
\begin{align} \label{eq:ex:1:1}
0 &= (1-\exp(2^{-k}\gamma)z)(1-\exp(-2^{-k}\gamma)z) \zek{k}{F}{z}
= (1 - 2\phi_{\gamma,k} z + z^2) \zek{k}{F}{z}
 \end{align}
 with
 \begin{equation} \label{eq:phik-def}
 \phi_{\gamma,k} := \frac{1}{2} \left ( \exp(2^{-k} \gamma) +
   \exp(-2^{-k} \gamma) \right ) = \begin{cases}
\cosh(2^{-k} |\gamma|) , & \gamma\in\R \\
\cos(2^{-k} |\gamma|) , & \gamma\in\imath \R . %(-\pi,\pi) .
\end{cases}
\end{equation}
Rewriting (\ref{eq:ex:1:1}) as 
 \begin{equation} \label{eq:ex:1:1b}
\sum_\iZ \left( F_{i}^{k} - 2\phi_{\gamma,k} F_{i-1}^{k} +
  F_{i-2}^{k}\right) z^i = 0, \end{equation}
and considering the coefficients of the even powers in the Laurent series, we get %each coefficient of the Laurent series must be 0. Taking $i$ even
%($i\rightarrow 2i+2$),we have
\begin{equation*} %\label{eq:coef0}
 F_{2i+2}^{k} - 2\phi_{\gamma,k} F_{2i+1}^{k} + F_{2i}^{k} = 0,
 \qquad \forall\iZ.
\end{equation*}
Then, since $F_{2i}^{k}=%F(2^{-k}2i) = F(2^{-k+1}i) = 
F_{i}^{k-1}$, we can write
\begin{equation} \label{eq:coef0b} 
F_{2i+1}^{k} %= 
%\frac{1}{2\phi_{\gamma,k}}F_{2i+2}^{k} +\frac{1}{2\phi_{\gamma,k}}F_{2i}^{k} 
= \frac{1}{2\phi_{\gamma,k}}F_{i+1}^{k-1} +
\frac{1}{2\phi_{\gamma,k}}F_{i}^{k-1}.
\end{equation}
%This equation provides a rule to compute the values attached to the
%odd positions at level $k$, $F_{2i+1}^{k}$,  by a linear combination
%of $F_{i}^{k-1}$ and $F_{i+1}^{k-1}$. 
Thus, the linear,  
non-stationary, subdivision scheme
% defined by the  recurrence
\begin{equation} \label{eq:ex:1:1.5}
 f^{k+1}_{2i}=f^{k}_i, \quad f^{k+1}_{2i+1} = \frac{1}{2\phi_{\gamma,k+1}}f^{k}_{i+1} + \frac{1}{2\phi_{\gamma,k+1}}f^{k}_{i}
 \end{equation}
reproduces $V_{(\gamma, -\gamma)}^{(1,1)}$, provided $\gamma$ is such
that $\phi_{\gamma,k} \neq 0$, $\forall k \geq 1$. This condition is obviously
fulfilled for any $\gamma \in \mathbb{R}$, and for $\gamma \in
\imath(-\pi,\pi)$. Note that for  $\gamma=0$ the scheme becomes 
$$
 f^{k+1}_{2i}=f^{k}_i, \quad f^{k+1}_{2i+1} = \frac{1}{2}f^{k}_{i+1} + \frac{1}{2}f^{k}_{i},
$$
i.e. the 2-point Deslauriers-Dubuc, $T_{1,1}$, subdivision scheme
(which reproduces $\Pi_1$).

Next, we derive a nonlinear stationary rule that is {\em essentially
  equivalent} to \eqref{eq:coef0b}, in a sense to be made precise
later. There are two key points in this derivation.
The first one is that \eqref{eq:ex:1:1b} implies that the parameters
$\phi_{\gamma,k}$ can be obtained from the level-$k$ samples of
functions $F \in V^{(1,1)}_{(\gamma,-\gamma)}$, provided that $F^k_i
\neq 0$, as follows
\begin{equation} \label{eq:ex-3}
 \phi_{\gamma,k} = \frac{F^k_{i-1} + F^k_{i+1}}{2 F^k_{i}}.
\end{equation}
 The second  key point is that the
definition of $\phi_{\gamma,k}$ in  \eqref{eq:phik-def} implies
that  the following two-scale relation holds true, %and by fundamental trigonometric properties,
\begin{equation} \label{eq:phi_quadrat}
\phi_{\gamma,k+1}^2=\frac12(1+\phi_{\gamma,k}), \qquad \forall k \geq 0.
\end{equation}
 Assuming that $\imath \gamma\in (-\pi,\pi)$ or $\gamma\in \R$,
 so  that $\phi_{\gamma,k+1}>0$, from  \eqref{eq:phi_quadrat} and
 \eqref{eq:ex-3} we get
$$ \phi_{\gamma,k+1}=\sqrt{\frac12(1+\phi_{\gamma,k})} =
\sqrt{\frac{F^k_{i-1} +2F^k_{i} + F^k_{i+1}}{4F^k_{i}}} .$$
Thus, from \eqref{eq:coef0b}  and the expression above,
we obtain the following nonlinear relation for the point-values of $F
\in V_{(\gamma, -\gamma)}^{(1,1)}$ (under appropriate restrictions so
that the operations involved may take place) 
$$  F^{k+1}_{2i+1} 
%= \frac{1}{2 E(F_{i-1}^k,F_i^k,F_{i+1}^k)}(F_i^k + F_{i+1}^k)
=\sqrt{\frac{F^k_{i}}{F^k_{i-1} +2F^k_{i} + F^k_{i+1}}}(F_i^k +
F_{i+1}^k), \qquad F\in V_{(\gamma, -\gamma)}^{(1,1)}. $$
This derivation serves as a motivation to consider the following
 set of  rules 
\begin{equation} 
\label{eq:Rgamma}
(Rf)_{2i}=f_i, \qquad (R f)_{2i+1}= \sqrt{\frac{f_{i}}{f_{i-1} +2f_{i} + f_{i+1}}}(f_i +
f_{i+1}), \qquad f\in\liZ
\end{equation}
as   {\em   stationary } representatives of the level-dependent rules
\eqref{eq:ex:1:1.5}. Obviously \eqref{eq:Rgamma} 
 is not always well-defined, due to the appearance of a square
root and a fraction. Hence, it does not define  a subdivision scheme,
but it does serve as a straightforward illustration of
the  issues that will be found in the next section, where we shall
carry out the  construction of a new family of subdivision
  schemes.

\end{ex}

%In the next section  we show how to define a nonlinear stationary
%subdivision scheme able to reproduce certain EP spaces (that include
%trigonometric and hyperbolic functions, hence conic sections) without
%previous knowledge on the parameter used in its definition. Our
%derivation will be based on the  relation (shown in the previous
%example) between 
%certain level-dependent subdivision rules, and a stationary nonlinear
%counterpart. 

\section{%The trigonometric subdivision scheme
A family of nonlinear, stationary, subdivision schemes with
EP-reproducing properties} \label{sec:definition}
 
Our goal is to define a symmetric 4-point stationary scheme that can
reproduce functions in $W_{0,\gamma}$, without any previous knowledge
of $\gamma$.
The starting point in our derivation  is the following (minimal-rank)
non-stationary, symmetric, 4-point subdivision scheme derived 
in \cite{DLL03}, here called $T_\gamma = (T_\gamma^k)_{k=0}^\infty$:
\begin{align}
(T_\gamma^k f^k)_{2i+1} 
%&=-\Gamma_\gamma^k f_{i-1}^k + (\frac12+\Gamma_\gamma^k) f_i^k + (\frac12+\Gamma_\gamma^k) -\Gamma_\gamma^k f_{i+2}^k, \nonumber \\
&
=\frac12 f_i^k + \frac12 f_{i+1}^k - \Gamma_\gamma^k
\left( f_{i+2}^k-f_{i+1}^k - f_{i}^k + f_{i-1}^k  \right), \qquad
\Gamma^k_\gamma = \frac{1}{16} \phi_{\gamma,k+2}^{-2}
\phi_{\gamma,k+1}^{-1}, \label{eq:esquema_lineal}
\end{align}
where $\phi_{\gamma,k}$ is defined in \eqref{eq:phik-def}. 
This scheme is  studied in
\cite{DL95}, where it is shown that it is $\cC^{2-}$-convergent and reproduces the space of exponential polynomials
%$W_{1,\gamma}:=
$V_{(0,\gamma,-\gamma)}^{(2,1,1)}$, 
$\gamma \in \R \cup \imath(-\pi,\pi)$.  Notice that for $\gamma=0$,
$\Gamma^k_\gamma= 1/16$ and $T_\gamma$ 
 becomes the (stationary) 4-point symmetric Deslauriers
Dubuc scheme.

We shall follow % Our goal is to follow 
the path of the Example \ref{ex:1}
% and  exploit the relation between 
%certain orthogonal rules, functional sample values and the
%coefficients in the linear non-stationary rules
%\eqref{eq:esquema_lineal}, 
in order  to define
a  nonlinear 
(symmetric) 4-point rule, which is {\em stationary representative  } of the
subdivision rules 
\eqref{eq:esquema_lineal}. 
The first step is to use the two level relation \eqref{eq:phi_quadrat}
to  write $\Gamma^k_\gamma $ directly in terms of $\phi_{\gamma,k}$.
As in Example \ref{ex:1},  for %these values of $\gamma$
 $\gamma\in\imath(-\pi,\pi)\cup \R$, 
%$ 0 < \phi_{\gamma,k+1}=\sqrt{\frac12(1+\phi_{\gamma,k})}$ and   
 we  can write
\begin{align*}
8 \phi_{\gamma,k+2}^2 \phi_{\gamma,k+1}
= 4 (1+\phi_{\gamma,k+1}) \phi_{\gamma,k+1}
%= (\frac12+\phi_{\gamma,k+1})^2-\frac14
= (1+2\phi_{\gamma,k+1})^2-1
=\left(1+\sqrt{2(1+\phi_{\gamma,k})}\right)^2-1,
\end{align*}
hence
\begin{align*} %\label{eq:gamma_ik}
\Gamma_\gamma^k &= \frac12 \frac{1}{\left(1+\sqrt{2(1+\phi_{\gamma,k})}\right)^2-1}.
\end{align*}
The second step is to consider the  orthogonal rules
of $W_{0,\gamma} = V_{(0,\gamma,-\gamma)}^{(1,1,1)}\subset
V_{(0,\gamma,-\gamma)}^{(2,1,1)}$ in order to relate the parameter
that defines the level dependent rules \eqref{eq:esquema_lineal} to
the functional samples.
Given $F\in  W_{0,\gamma} $, using Theorem \ref{thm:orto} we get
\begin{align*}
0&=(1-z)(1-2\phi_{\gamma,k}z+z^2) \zek{k}{F}{z} = \left(1-(2\phi_{\gamma,k}+1)z+(2\phi_{\gamma,k}+1)z^2-z^3\right) \zek{k}{F}{z},
\end{align*}
or, equivalently, 
\begin{equation}
F^k_{i-1} -(2\phi_{\gamma,k}+1)F^k_{i} +(2\phi_{\gamma,k}+1) F^k_{i+1} -F^k_{i+2}= 0, \qquad \forall \iZ,\label{eq_nul}
\end{equation}
from which we get that
%which relates $\phi_k$ and the function values
%$F^k=(F(i2^{-k}))_{\iinZ}$, $F\in W_{0,\gamma}$. Then
\begin{equation}
 \phi_{\gamma,k} =\frac12 (\frac{F^k_{i+2} - F^k_{i-1}}{F^k_{i+1}-F^k_i}-1),
 \qquad \forall \iZ \quad s.t.\quad F^k_i \neq F^k_{i+1}. \label{eq:cos}
 \end{equation}
Notice that if $F^k_i =F^k_{i+1}$,  from (\ref{eq_nul}) we deduce that
$F^k_{i+2}=F^k_{i-1}$. For these values of $i$,  we must be sampling
around an extremum, or a flat region,  of $F(t)$ and, in this case,
$\phi_{\gamma,k}$ 
cannot be deduced from the data $F^k_l$, $l=i-1,\ldots, i+2$. On the
other hand, \eqref{eq:cos} implies that $\phi_{\gamma,k}$ can always
be determined from the 
available data when the 
sampled points belong to a region in which $F$ is strictly monotone.

We remark that if the orthogonal rules of
$V_{(0,\gamma,-\gamma)}^{(2,1,1)}$ % $W_{1,\gamma}$  
are used to express
$\phi_{\gamma,k}$ in terms of the functional samples $F^k$, the
associated nonlinear rule will involve  5 functional 
samples. We have used the orthogonal rules of $W_{0,\gamma}$ to obtain
a nonlinear rule involving only 4 points, as in \eqref{eq:esquema_lineal}.

 Let us, thus, consider that $F^k_i \neq F^k_{i+1}$. Then, we readily
 deduce from \eqref{eq:cos} that
\begin{equation*} %\label{eq:gamma:replace}
2( 1+\phi_{\gamma,k} )= \frac{F^k_{i+2} -
    F^k_{i-1}}{F^k_{i+1}-F^k_i} +1 \quad \Rightarrow \quad \Gamma_\gamma^k
=\frac12 \frac{1}{\left(1+
\sqrt{1+\frac{F^k_{i+2} - F^k_{i-1}}{F^k_{i+1}-F^k_i}} \right)^2-1}.
\end{equation*}

The observations above lead us to consider the function
\begin{equation} \label{eq:Gamma-def}
 \Gamma(f_{-1},f_0,f_1,f_2)
:=\frac12 \frac{1}{\left(1+\sqrt{1+ \frac{f_2 - f_{-1}}{f_1-f_0}}\right)^2-1}, %\label{eq:gamma}
\end{equation}
which satisfies %(for $k$ large enough)
$ \Gamma^k_\gamma = \Gamma(F^k_{i-1}, F^k_i, F^k_{i+1}, F^k_{i+2})$, $\forall i\in\Z$ s.t. $F^k_i \neq
F^k_{i+1}$, when $F^k= F|_{\kZ}$, $F \in W_{0,\gamma}$. 
Hence, the nonlinear stationary rule
\begin{equation} \label{eq:primerS}
(R f^k)_{2i+1} =\frac12 f_i^k + \frac12 f_{i+1}^k - \Gamma(f_{i-1}^k,f_i^k,f_{i+1}^k,f_{i+2}^k) \left(f_{i+2}^k-f_{i+1}^k - f_{i}^k + f_{i-1}^k  \right),
\end{equation}
satisfies $(R F^k)_{2i+1}=F^k_{2i+1}$ if $F^k= F|_{\kZ}$, $F \in
W_{0,\gamma}$ in strictly monotone regions of $F$, since in this case the application of \eqref{eq:primerS} is equivalent to the application of \eqref{eq:esquema_lineal}.

Since   $\Gamma$ in \eqref{eq:Gamma-def} is only well-defined if $f_1
\neq f_0$ and $1+(f_2-f_{-1})/(f_1-f_0) >0$, \eqref{eq:primerS} cannot
be directly applied to general 
sequences $f \in l_\infty(\Z)$.  In order to have a
nonlinear rule applicable to all sequences in $\liZ$, we
propose to %replace $\Gamma$ in \eqref{eq:Gamma-def} by a 
introduce the cut-off
function $\Gamma_{\epsilon}$ defined  as follows ($\epsilon>0$):
\begin{equation} \label{eq:GammaE-def}
\Gamma_\epsilon (f_{-1},f_0,f_1,f_2) = \begin{cases}
\frac12 \frac{1}{\left(1+\sqrt{ 1+\frac{f_2 -
        f_{-1}}{f_1-f_0}}\right)^2-1}, & f_1\neq f_0 \text{ and }
1+\frac{f_2 - f_{-1}}{f_1-f_0} \geq \epsilon^2\\
0, & %f_0=f_1 \text{ and }
f_{-1}\leq f_0 =f_1\leq f_2 \text{ or } f_{-1}\geq f_0=f_1 \geq f_2 \\
\frac{1}{16}, & \text{ otherwise}.
\end{cases} %\label{eq:gammae}
\end{equation}
%which we use to define our {\em trigonometric subdivision  scheme}.
%which is well defined in $\mathbb{R}^4$. 

\begin{defi} % Trigonometric subdivision scheme
Given $\epsilon> 0$, an interpolatory, nonlinear, stationary subdivision scheme
$S_\epsilon$ is defined by the following insertion rule 
\begin{equation} \label{eq:seps-def}
(S_\epsilon f)_{2i+1}=\frac12 f_i + \frac12 f_{i+1} - \Gamma_\epsilon(f_{i-1},f_i,f_{i+1},f_{i+2})\left(f_{i+2}-f_{i+1} - f_{i} + f_{i-1}  \right).
\end{equation}
%where $\Gamma_\epsilon$ is defined in \eqref{eq:GammaE-def}.
\end{defi}

In the definition of the function $\Gamma_\epsilon$ in
\eqref{eq:GammaE-def},  we have taken into account the following considerations:

1. $\Gamma_\epsilon: \R^4 \rightarrow \R$ must be a bounded
function. This will be an important ingredient in proving the
convergence of the family of schemes \eqref{eq:seps-def}.
% is a bounded function. 
Indeed
\[ 1+\frac{f_2 - f_{-1}}{f_1-f_0} \geq \epsilon^2 \quad \rightarrow
\quad \Gamma_\epsilon(f_{-1},f_0,f_1,f_2) \leq 
\frac12 \frac{1}{(1+\epsilon)^2 -1 }  =\frac{1}{2 \epsilon (2+\epsilon)}=:M_\epsilon. \]
  In what follows, we shall  assume that $0<\epsilon\leq 2$, so that
  $\Gamma_\epsilon(f_{-1},f_0,f_1,f_2) \leq \max \left \{
    \frac{1}{16},  M_\epsilon \right \}=M_\epsilon $.

2. If $f_0=f_1$ but we are on a 'monotone' region, we seek  to generate
monotone data. For this reason, the definition of $\Gamma_\epsilon$ in
this case 
leads to $S_\epsilon \equiv T_{1,1}$.

3. If none of the above holds,  we revert to the
linear $T_{2,2}$ prescription (maximal regularity). %interpolatory 4-point scheme).

Next, we state and prove  the reproduction properties of the schemes
$S_\epsilon$. As mentioned in the
introduction, by {\em functional reproduction} we mean  the capability of a subdivision scheme to construct (or recover) a
 particular function from its point evaluations at the integers. 
As observed in \cite{CR11}, Definition 
\ref{defi:rep_gen} is equivalent to the following step-wise reproduction  property
\begin{equation}  \label{eq:rep}
F^{k+1} = S^k F^k, \qquad \forall F\in\mathcal{V},
\end{equation}
with $F^k:= F|_\kZ$, as long as $S$ is {\em
  non-singular}\footnote{ $ S$ is non-singular if  $S^\infty f^0=0 \,
  \leftrightarrow f^0=0$.} 
  and either linear or interpolatory. The equivalence for linear
  schemes was proved in \cite{CR11}, but it also holds
  for  interpolatory   schemes (linear or not).

\begin{lem}
Let $S$ be an interpolatory subdivision scheme. $S$ satisfies the
step-wise reproduction property \eqref{eq:rep} if and only if  $S$ reproduces
the functional space $\mathcal{V}$, in the sense of Definition \ref{defi:rep_gen}.
\end{lem}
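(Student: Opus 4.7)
The plan is to show both implications by reducing everything to the interpolation identity \eqref{eq:interpolate_initial}, namely $(S^\infty f^0)(i 2^{-k}) = f^k_i$, which holds for any convergent interpolatory scheme. The proof is essentially mechanical once this identity is invoked; the only subtlety is to pass from dyadic sample values to equality of continuous functions.

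For the direction \emph{step-wise reproduction} $\Rightarrow$ \emph{functional reproduction}, I take $F\in\mathcal{V}$ and set $f^0_i := F(i) = F^0_i$. Using the hypothesis $F^{k+1} = S^k F^k$ together with the recursion $f^{k+1} = S^k f^k$, a straightforward induction on $k$ gives $f^k = F^k$ for all $k\geq 0$. Applying \eqref{eq:interpolate_initial} yields $(S^\infty f^0)(i 2^{-k}) = f^k_i = F(i 2^{-k})$ on every dyadic grid. Since both $\Sinf f^0$ and $F$ are continuous on $\R$ and the set of dyadic rationals is dense, the two functions coincide, which is exactly the conclusion of Definition~\ref{defi:rep_gen}.

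For the converse direction, assume $S$ reproduces $\mathcal{V}$ and fix $F\in\mathcal{V}$. Defining $f^0_i := F(i)$, the hypothesis gives $\Sinf f^0 = F$. Invoking \eqref{eq:interpolate_initial} once more, $f^k_i = (\Sinf f^0)(i 2^{-k}) = F(i 2^{-k}) = F^k_i$, so $f^k = F^k$ for every $k\geq 0$. Applying the recursion $f^{k+1} = S^k f^k$ to this equality immediately yields $F^{k+1} = S^k F^k$, which is the step-wise reproduction property.

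There is no real obstacle here: both directions are a one-line induction combined with the interpolation property. The only point worth emphasising is where continuity is used (only in the first direction, to conclude pointwise equality from equality on a dense set) and that neither direction requires linearity of $S$, so the lemma applies verbatim to the nonlinear schemes $S_\epsilon$ introduced in Section~\ref{sec:definition}.
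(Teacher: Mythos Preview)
Your proof is correct and follows essentially the same route as the paper's own argument: both directions hinge on the interpolation identity \eqref{eq:interpolate_initial}, with the forward implication closing via continuity on the dense dyadic grid and the converse reading the recursion $f^{k+1}=S^k f^k$ back as $F^{k+1}=S^k F^k$. Your write-up is in fact slightly more explicit than the paper's (which compresses the density/continuity step into ``by the definition of convergence''), but the underlying mechanism is identical.
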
 
\begin{proof}
   Assume $S$
   satisfies   \eqref{eq:rep}. If $f^0=F|_\Z$, obviously
   $f^k=F^k=F|_\kZ$ and (by the definition of convergence)  $S^\infty
   f^0 = F$.  For the opposite 
  direction, since the scheme is interpolatory  $S^k f^k= (S^\infty
  f^0)(\kZ)=F(\kZ)$,  which implies step-wise reproduction.
\end{proof}

Thus, we use \eqref{eq:rep} to check the reproduction properties of
$S_\epsilon$. Reproduction of $\Pi_2$ is almost immediate.

\begin{prop} \label{prop:repro_poli2}
 $S_\epsilon$ reproduces $\Pi_2$ for $\epsilon\in [0,2]$.
\end{prop}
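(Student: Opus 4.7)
The plan is to invoke the preceding lemma to reduce the claim to the step-wise reproduction property $S_\epsilon F^k = F^{k+1}$ for every $F\in\Pi_2$ and every $k\ge 0$, where $F^k_i:=F(i2^{-k})$. Since $S_\epsilon$ is interpolatory, the even indices are immediate, so only the insertion rule at odd indices needs attention. The natural target is to show that on samples of $F\in\Pi_2$ the piecewise definition of $\Gamma_\epsilon$ always yields $\tfrac{1}{16}$ (or $0$, in a degenerate sub-case where the correction term also vanishes on its own), so that \eqref{eq:seps-def} collapses to the $4$-point Deslauriers--Dubuc rule $T_{2,2}$, which is well known to reproduce $\Pi_3\supset\Pi_2$.

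I would begin by writing $F(t)=at^2+bt+c$, setting $g_j:=F^k_{i+j}$, and computing the two key identities
$$g_2-g_{-1}=3\bigl(a(2i+1)2^{-2k}+b\,2^{-k}\bigr)=3(g_1-g_0),\qquad g_2-g_1-g_0+g_{-1}=4a\,2^{-2k}.$$
In the generic case $g_0\neq g_1$, the ratio $(g_2-g_{-1})/(g_1-g_0)$ equals $3$, so $1+(g_2-g_{-1})/(g_1-g_0)=4\ge \epsilon^2$ for every $\epsilon\in[0,2]$. The first branch of \eqref{eq:GammaE-def} is therefore active, and a direct evaluation gives $\Gamma_\epsilon=\tfrac{1}{2}\bigl((1+\sqrt{4})^2-1\bigr)^{-1}=\tfrac{1}{16}$, so \eqref{eq:seps-def} reduces to the $T_{2,2}$ insertion rule and delivers the correct value $F((2i+1)2^{-(k+1)})$.

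The main point of care is the degenerate case $g_0=g_1$, where the first-branch formula is undefined. Within $\Pi_2$ this can only occur if $F$ is constant (in which case all four $g_j$ coincide, the second branch of \eqref{eq:GammaE-def} fires with $\Gamma_\epsilon=0$, and the correction term in \eqref{eq:seps-def} vanishes automatically) or if the midpoint $(2i+1)2^{-(k+1)}$ is the vertex of the parabola (which, for $a\neq 0$, is forced by $g_0=g_1$). In the vertex sub-case, symmetry yields $g_{-1}=g_2$, and the sign of $a$ places both $g_{-1}$ and $g_2$ strictly on the same side of $g_0=g_1$; hence neither monotone hypothesis in the second branch of \eqref{eq:GammaE-def} holds, and we land in the ``otherwise'' branch with $\Gamma_\epsilon=\tfrac{1}{16}$. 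Substituting $g_0=g_1$ and $g_{-1}=g_2$ into \eqref{eq:seps-def} simplifies the output to $g_0-\tfrac{1}{8}(g_{-1}-g_0)$, and a short computation using the vertex parametrization $F(t)=a(t-(2i+1)2^{-(k+1)})^2+d$ identifies this value with $d=F((2i+1)2^{-(k+1)})$, as required.

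The only genuine subtlety is verifying which piece of the cut-off $\Gamma_\epsilon$ is activated in this degenerate branch; apart from that, the argument reduces to the single identity $g_2-g_{-1}=3(g_1-g_0)$ for quadratic samples, combined with the known reproduction properties of the standard Deslauriers--Dubuc rule $T_{2,2}$.
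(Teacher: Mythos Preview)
Your proposal is correct and follows essentially the same approach as the paper: both reduce to step-wise reproduction, use the identity $g_2-g_{-1}=3(g_1-g_0)$ (the paper phrases this as $\nabla^3 F^k\equiv 0$) to land in the first branch of $\Gamma_\epsilon$ with value $\tfrac{1}{16}$ in the generic case, and then case-split on $g_0=g_1$ to identify which remaining branch fires. The only cosmetic difference is organizational: the paper splits the degenerate case by whether the monotone hypothesis holds (and infers $F$ constant when it does), whereas you first classify $F$ as constant or vertex-centered and then read off the active branch; your explicit verification of the vertex value is redundant since it is just the $T_{2,2}$ rule, which the paper simply invokes.
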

\begin{proof}
Let $F \in \Pi_2$, and $F^k=F|_\kZ$. Since $\nabla^3 F^k \equiv 0$, we have that
% polynomial of degree 2 can be annihilated using 3rd order finite differences  
$$F^k_{i-1} -3F^k_{i} +3 F^k_{i+1} -F^k_{i+2}= 0, \qquad \forall  i \in \Z. $$
For a given $ i \in \Z$, we consider all the possible cases:   If
  $F^k_i \neq F^k_{i+1}$, from the previous relation
$$ %F^k_{i-1} -3F^k_{i} +3 F^k_{i+1} -F^k_{i+2}= 0
\frac{F^k_{i+2} - F^k_{i-1}}{ F^k_{i+1} - F^k_{i}}=3  \quad\Longrightarrow
 \quad  \Gamma_\epsilon(f_{i-1},f_i,f_{i+1},f_{i+2})= \frac{1}{16},$$
 which implies that $(S_\epsilon F^k)_{2i+1}  = (T_{2,2} F^k)_{2i+1}
 = F^{k+1}_{2i+1}$, because $T_{2,2}$ reproduces $\Pi_3$.
If  $F^k_i = F^k_{i+1}$,  and
\begin{equation} \label{eq:Fk_monotone}
F^k_{i-1}\leq F^k_i = F^k_{i+1} \leq F^k_{i+2} \text{ or }
F^k_{i-1}\geq F^k_i = F^k_{i+1} \geq F^k_{i+2},
\end{equation}
then $(S_\epsilon F^k)_{2i+1} = (T_{1,1} F^k)_{2i+1}$. But if $F\in
\Pi_2$ and satisfies \eqref{eq:Fk_monotone}, it must be a constant function,
hence $ (T_{1,1} F^k)_{2i+1}= F^{k+1}_{2i+1}$.
If  $F^k_i = F^k_{i+1}$ but \eqref{eq:Fk_monotone} does not hold,
then
$(S_\epsilon F^k)_{2i+1}=(T_{2,2} F^k)_{2i+1}= F^{k+1}_{2i+1}$. % also  in this case.
\end{proof}
For functions in $W_{0,\gamma}$ we have the following result.

\begin{prop} \label{prop:repro0}
%Let it be $\epsilon>0$ and $\gamma\in \R\cup\imath(-\pi,\pi)$ such that 
Let $\epsilon\in [0,2]$, and % $S_\epsilon$ reproduces
$F\in W_{0,\gamma}$. Then %in the following sense:
\begin{align*}
 \text{If } \quad F_i^k \neq F_{i+1}^k  \quad
&\Longrightarrow \quad  (S_\epsilon F^k)_{2i+1} = F^{k+1}_{2i+1},
\qquad \forall k\geq0, \forall  \iZ,
\end{align*}
provided that $\gamma\in\R$ or $\gamma\in \imath (-\pi,\pi)$ s.t. $\cos(|\gamma|) \geq -1+\epsilon^2/2$.
\end{prop}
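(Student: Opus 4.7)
The plan is to reduce the verification of the nonlinear insertion rule on data sampled from $W_{0,\gamma}$ to the known reproduction property of the linear non-stationary scheme $T_\gamma$ in \eqref{eq:esquema_lineal}. Since $W_{0,\gamma} = V_{(0,\gamma,-\gamma)}^{(1,1,1)} \subset V_{(0,\gamma,-\gamma)}^{(2,1,1)}$, the linear scheme $T_\gamma$ reproduces $F$, so $(T_\gamma^k F^k)_{2i+1} = F^{k+1}_{2i+1}$ for every $i$ and $k$. Comparing the linear rule with the definition of $S_\epsilon$ in \eqref{eq:seps-def}, equality $(S_\epsilon F^k)_{2i+1} = (T_\gamma^k F^k)_{2i+1}$ is equivalent to the numerical identity
\[
\Gamma_\epsilon(F^k_{i-1},F^k_i,F^k_{i+1},F^k_{i+2}) \;=\; \Gamma_\gamma^k,
\]
so the entire proof is about checking this equality (and, in particular, that the ``first branch'' of $\Gamma_\epsilon$ is selected).

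Next, I would use the orthogonal-rule identity \eqref{eq:cos}, which is applicable precisely because $F^k_i \neq F^k_{i+1}$, to rewrite
\[
1 + \frac{F^k_{i+2}-F^k_{i-1}}{F^k_{i+1}-F^k_i} \;=\; 2(1+\phi_{\gamma,k}).
\]
Then I would invoke the two-scale relation \eqref{eq:phi_quadrat} together with the positivity of $\phi_{\gamma,k+1}$ (which holds for $\gamma\in\R$ and for $\gamma\in\imath(-\pi,\pi)$ since then $2^{-(k+1)}|\gamma|<\pi/2$) to rewrite $\Gamma_\gamma^k$ as
\[
\Gamma_\gamma^k \;=\; \tfrac12\,\frac{1}{\bigl(1+\sqrt{2(1+\phi_{\gamma,k})}\bigr)^2-1},
\]
exactly as done in the derivation preceding \eqref{eq:Gamma-def}. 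Substituting the previous display into this expression gives precisely the first-branch formula for $\Gamma_\epsilon$, yielding the desired identity.

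The remaining (and only genuinely delicate) step is to verify that the first branch of $\Gamma_\epsilon$ is actually the one that fires, i.e.\ that $2(1+\phi_{\gamma,k}) \geq \epsilon^2$ under the stated hypotheses. For $\gamma\in\R$, $\phi_{\gamma,k} = \cosh(2^{-k}|\gamma|)\geq 1$, so $2(1+\phi_{\gamma,k})\geq 4 \geq \epsilon^2$ (recalling $\epsilon\leq 2$). For $\gamma = \imath\theta$, $\theta\in(-\pi,\pi)$, we have $\phi_{\gamma,k} = \cos(2^{-k}|\gamma|)$, and since $\cos$ is decreasing on $[0,\pi]$ and $2^{-k}|\gamma|\leq|\gamma|<\pi$, we obtain
\[
\phi_{\gamma,k} \;\geq\; \cos(|\gamma|) \;\geq\; -1+\tfrac{\epsilon^2}{2},
\]
which is exactly the hypothesis; this yields $2(1+\phi_{\gamma,k})\geq\epsilon^2$ as needed.

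The main obstacle, modest as it is, lies in this last monotonicity argument in $k$: one must be careful that the condition on $\gamma$ (imposed only at level $k=0$ via $\cos(|\gamma|)\geq -1+\epsilon^2/2$) propagates uniformly to every refinement level. Once that is in place, the proof is a straightforward chain of identities: orthogonal rule $\Rightarrow$ expression of $\phi_{\gamma,k}$ in terms of samples $\Rightarrow$ two-scale relation $\Rightarrow$ identification of $\Gamma_\epsilon$ with $\Gamma_\gamma^k$ $\Rightarrow$ reproduction via $T_\gamma$.
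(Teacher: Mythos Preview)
Your proposal is correct and follows essentially the same route as the paper: reduce to $\Gamma_\epsilon = \Gamma_\gamma^k$ via \eqref{eq:cos} and the two-scale formula, then verify the threshold $2(1+\phi_{\gamma,k})\geq \epsilon^2$ in the real and imaginary cases. If anything, you are slightly more explicit than the paper about the monotonicity-in-$k$ step (using that $\cos$ is decreasing on $[0,\pi]$ so $\phi_{\gamma,k}\geq \cos(|\gamma|)$), which the paper leaves implicit.
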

\begin{proof}
Let  $F\in W_{0,\gamma}$ and $i, k$ such that
%\eqref{eq:cos} holds $\forall i \in \Z$ such that 
$ F_i^k \neq  F_{i+1}^k$. Using \eqref{eq:cos} 
$$\phi_{\gamma,k} = \frac12 \left (\frac{F^k_{i+2} -
  F^k_{i-1}}{F^k_{i+1}-F^k_i}-1 \right ) \quad \Rightarrow \quad 
\frac12 ( 1+\phi_{\gamma,k} )= \frac14 \left ( \frac{F^k_{i+2} -
    F^k_{i-1}}{F^k_{i+1}-F^k_i} +1 \right ).$$
If we can prove that $\Gamma_\epsilon(F^k_{i-2},F^k_{i-1},F^k_{i},F^k_{i+1}) =
\Gamma^k_\gamma$, we readily obtain 
$(S_\epsilon F^k)_{2i+1}=(T_\gamma^k F^k)_{2i+1}=F^{k+1}_{2i+1}$, from
the EP reproduction properties of $T_\gamma^k$. 

If $\gamma\in\R$ then $\phi_{\gamma,k}=\cosh(2^{-k}
|\gamma|) \geq 1$,  hence $\frac12( 1+\phi_{\gamma,k} ) \geq
\epsilon^2/4$ $\forall \epsilon\in [0, 2]$.
If $\gamma\in
\imath(-\pi,\pi)$ and  $\cos(|\gamma|) \geq -1+\epsilon^2/2$, we also have that
$\frac12( 1+\phi_{\gamma,k} ) \geq \epsilon^2/4$.
 Hence, in both cases, 
$$  \frac12( 1+\phi_{\gamma,k} ) \geq \epsilon^2/4 \longrightarrow 1+\frac{F^k_{i+2} -   F^k_{i-1}}{F^k_{i+1}-F^k_i} \geq
\epsilon^2  \longrightarrow  \Gamma_\epsilon(F^k_{i-2},F^k_{i-1},F^k_{i},F^k_{i+1}) =
\Gamma^k_\gamma.$$
\end{proof}
From a practical point of view, the parameter $\epsilon$ restricts the
values of $\gamma$ for which 
the space $W_{0,\gamma}$ is reproduced by $S_\epsilon$.
The  previous result shows that  the value of $\epsilon \in [0,2]$
does not affect the 
  reproduction of hyperbolic  functions, i.e. $W_{0,\gamma}$, for
  $\gamma \in \R$,  but restricts
  the reproduction of trigonometric functions. However, the
  restriction in Proposition
  \ref{prop:repro0} is not   too severe. For $\epsilon=1$  and values
  of  $\gamma \in \imath (-\pi,\pi)$  such that
  $\cos(|\gamma|)\geq -1/2$, Proposition \ref{prop:repro0} holds.  In
  particular, since $\cos(2\pi/3)=-1/2$, circles can
  be reproduced 
  from $n$ samples for $n\geq 3$,  as long
  as the initial samples  satisfy $F^0_i\neq F^0_{i+1}$, $\forall 
  \iZ$. 

As an example, let us consider the function
\begin{equation*} %\label{eq:def_cosinus}
F_u(t)=\cos\left (\frac{2\pi}{3} t +u\right ), \qquad u \in \mathbb{R}.
\end{equation*}
In Figure  \ref{fig:rep_cercle_min}
we apply our scheme (with $\epsilon=1$) to refine the initial set $f^0
= (G_u(i))_\iinZ$  where  $G_u(t)  = (\cos(\gamma t+u), \sin(\gamma
t+u))$, with $\gamma=2 \pi/3$ 
 i.e. three points in a circle. Since $ \cos(2\pi/3) =\cos(4\pi/3)$, the
  x-components of $G_0(1)$ and $G_0(2)$ are equal.   The condition
  $F_{0,i}^0 \neq F_{0,i+1}^0 
\forall i $ is not
satisfied and   the circle is not reproduced. On the other hand, by
slightly modifying the value of $u$, for 
instance to $u=10^{-5}$, the condition that ensures exact reproduction
is fulfilled. As a consequence, the circle is correctly reproduced. In
Figure \ref{fig:rep_cercle_min}, the limit curve obtained with the
4-point Deslauriers-Dubuc scheme is also shown for comparison. 

\begin{figure}[!h]
\centering
\includegraphics[clip, width=0.5 \textwidth]{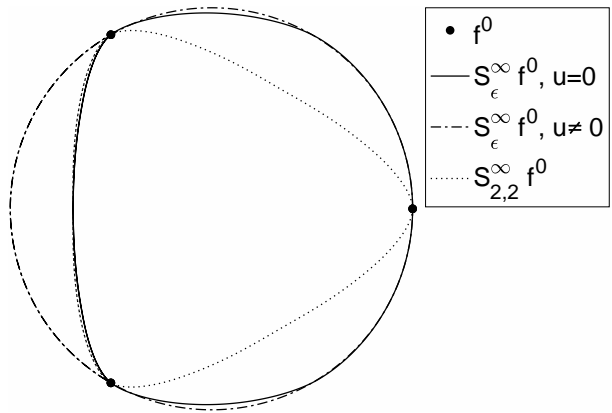}
\caption{ \label{fig:rep_cercle_min} Curves generated by  $S_\epsilon$
  ($\epsilon=1$)  and $T_{2,2}$  from an initial sequence
  $f^0 = (G_u(i))_\iinZ$, where  $G_u(t)
  = (\cos(\gamma t+u), \sin(\gamma t+u))$, for $u=0$ and
  $u=u=10^{-5}$. Observe that  $S^\infty_\epsilon f^0$ strongly depends on
  the value of $u$.}
\end{figure}
 
This example shows also that $S_\epsilon$ is convergent
(at least for $\epsilon=1$), a
fact that will be proven in the next section, but it is not stable.  Nevertheless, in section \ref{sec:stable} we show that
stability holds  if the initial data is appropriately restricted.

\section{Convergence} % of the trigonometric scheme
 \label{sec:convergence}}

In this section we check that the schemes in \eqref{eq:seps-def} are
of the form \eqref{eq:nonlinearSS}. Then we prove  convergence by
checking the conditions in 
Theorem \ref{teo:conv1}. 
%Let us check that $S_\epsilon$ is of the form
%\eqref{eq:nonlinearSS}. 
Observe that
$$\frac{f_2 - f_{-1}}{f_1-f_0}
= \frac{\nabla f_1 + \nabla f_0 + \nabla f_{-1}}{\nabla f_0} \quad \longrightarrow \quad \Gamma_\epsilon(f_{-1},f_0,f_1,f_2) = \Gamma_\epsilon^{[1]}(\nabla f_{-1},\nabla f_0,\nabla f_1)$$
with
\begin{equation*} %\label{eq:gammae1}
\Gamma^{[1]}_\epsilon (f_{-1},f_0,f_1) = \begin{cases}
\frac12
\frac{1}{\left(1+\sqrt{1+\frac{f_{-1}+f_0+f_1}{f_0}}\right)^2-1}, &
f_0\neq 0 \text{ and } 1+\frac{f_{-1}+f_0+f_1}{f_0} \geq  \epsilon^2 \\
0, & f_{-1} \cdot f_1 \geq 0 = f_0\\
\frac{1}{16}, & \text{ otherwise.}
\end{cases}
\end{equation*}
Hence, we can write
\begin{align} \label{eq:Sepsform10}
(S_\epsilon f)_{2i+1} &= \frac12 f_i + \frac12 f_{i+1} - \Gamma_\epsilon^{[1]}(\nabla f_{i-1},\nabla f_i,\nabla f_{i+1}) \left(\nabla f_{i+1} - \nabla f_{i-1}  \right), %\label{eq:Se_alt}
\end{align}
that is, $S_\epsilon$ is of the form \eqref{eq:nonlinearSS} with
$T= T_{1,1}$ and $\cF_\epsilon : \liZ \rightarrow \liZ$ given by
\begin{equation} \label{eq:Sepsform10F}
 \cF_\epsilon(f)_{2i}= 0, \qquad \cF_\epsilon(f)_{2i+1}=  \Gamma_\epsilon^{[1]}(
f_{i-1}, f_i, f_{i+1}) \left( f_{i-1} -  f_{i+1}  \right). 
\end{equation}
\begin{teo} \label{teo:converg}
 The  scheme  $S_\epsilon$ is convergent for $\epsilon\in(\sqrt{3}-1,2]$.
\end{teo}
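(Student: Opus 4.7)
The plan is to invoke Theorem~\ref{teo:conv1} directly on the decomposition $S_\epsilon f = T_{1,1} f + \cF_\epsilon(\nabla f)$ already exhibited in \eqref{eq:Sepsform10}--\eqref{eq:Sepsform10F}, taking the 2-point Deslauriers--Dubuc scheme $T_{1,1}$ as the underlying linear scheme $T$. Since $T_{1,1}$ is linear and convergent, all that remains is to verify the boundedness condition \textbf{C1} on $\cF_\epsilon$ and a contractivity condition \textbf{C2} on some power of $S_\epsilon^{[1]}$; the aim will be to get away with $L=1$.

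The crucial preliminary step is a uniform estimate for $\Gamma_\epsilon^{[1]}$. Branch by branch in its definition: on the first branch, the constraint $1+(f_{-1}+f_0+f_1)/f_0 \geq \epsilon^2$ gives $\bigl(1+\sqrt{1+(f_{-1}+f_0+f_1)/f_0}\bigr)^2-1 \geq (1+\epsilon)^2-1 = \epsilon(2+\epsilon)$, so $\Gamma_\epsilon^{[1]} \leq M_\epsilon := 1/(2\epsilon(2+\epsilon))$; the second branch yields $0$; the third yields $1/16$. Because $M_\epsilon$ is decreasing in $\epsilon$ and $M_2=1/16$, for every $\epsilon \in (0,2]$ one has $M_\epsilon \geq 1/16$, hence the uniform bound $0 \leq \Gamma_\epsilon^{[1]}(\cdot) \leq M_\epsilon$. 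Condition \textbf{C1} then follows at once from \eqref{eq:Sepsform10F}: since $\cF_\epsilon(f)_{2i}=0$ and $|\cF_\epsilon(f)_{2i+1}| \leq M_\epsilon\,|f_{i-1}-f_{i+1}|$, one gets $\|\cF_\epsilon(f)\|_\infty \leq 2M_\epsilon \|f\|_\infty$.

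For \textbf{C2}, the next step is to write $S_\epsilon^{[1]}$ explicitly via \eqref{eq:nonlinearS1}. A short computation (using that $T_{1,1}^{[1]}$ acts by $(T_{1,1}^{[1]} g)_j = g_{\lfloor j/2\rfloor}/2$ and that $\cF_\epsilon$ vanishes on even indices) gives
\begin{align*}
(S_\epsilon^{[1]} f)_{2i}   &= \tfrac{1}{2} f_i + \Gamma_\epsilon^{[1]}(f_{i-1},f_i,f_{i+1})\,(f_{i-1}-f_{i+1}),\\
(S_\epsilon^{[1]} f)_{2i+1} &= \tfrac{1}{2} f_i - \Gamma_\epsilon^{[1]}(f_{i-1},f_i,f_{i+1})\,(f_{i-1}-f_{i+1}).
\end{align*}
Feeding in the uniform bound $|\Gamma_\epsilon^{[1]}|\leq M_\epsilon$ yields $\|S_\epsilon^{[1]} f\|_\infty \leq \eta\,\|f\|_\infty$ with $\eta = \tfrac{1}{2}+2M_\epsilon$. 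The requirement $\eta<1$ is equivalent to $M_\epsilon<1/4$, i.e.\ $\epsilon(2+\epsilon)>2$, whose unique positive root is precisely $\sqrt{3}-1$. Hence for every $\epsilon\in(\sqrt{3}-1,2]$ condition \textbf{C2} holds with $L=1$, and Theorem~\ref{teo:conv1} delivers the convergence of $S_\epsilon$.

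There is no real obstacle: the proof is essentially contained in the uniform bound $M_\epsilon$ for the cut-off $\Gamma_\epsilon^{[1]}$, which was built into the design of \eqref{eq:GammaE-def}. The only subtlety is the interplay between the two threshold values in the definition of $\Gamma_\epsilon^{[1]}$: one needs $M_\epsilon\geq 1/16$ (pinning the upper end of the admissible range to $\epsilon\leq 2$) in order for the ``otherwise'' value $1/16$ not to worsen the uniform bound, and one needs $M_\epsilon<1/4$ (pinning the lower end to $\epsilon>\sqrt{3}-1$) to obtain strict contractivity of $S_\epsilon^{[1]}$ at the first iterate.
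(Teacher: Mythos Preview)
Your proof is correct and follows essentially the same approach as the paper: write $S_\epsilon = T_{1,1} + \cF_\epsilon(\nabla\,\cdot\,)$, compute $S_\epsilon^{[1]}$ explicitly, and verify \textbf{C1}--\textbf{C2} of Theorem~\ref{teo:conv1} with $L=1$, using the bound $\Gamma_\epsilon^{[1]}\le M_\epsilon$ and the inequality $\tfrac12+2M_\epsilon<1\iff\epsilon>\sqrt{3}-1$. The only cosmetic difference is that the paper bounds the three branches of $\Gamma_\epsilon^{[1]}$ separately (getting $\tfrac12+2M_\epsilon$, $\tfrac12$, $\tfrac58$) and then takes the maximum, whereas you first observe $M_\epsilon\ge\tfrac1{16}$ for $\epsilon\le2$ so that the single bound $\Gamma_\epsilon^{[1]}\le M_\epsilon$ covers all branches at once.
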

 
\begin{proof} 
By \eqref{eq:nonlinearS1}, $S_\epsilon^{[1]}=T_{1,1}^{[1]} +
\nabla \cF_\epsilon$, i.e.
$$
(S_\epsilon^{[1]}f)_{2i+j}= \frac12 f_i +(-1)^j \Gamma^{[1]}_\epsilon(f_{i-1},f_i,f_{i+1}) (f_{i-1} - f_{i+1} )  ,  \qquad j=0,1.
$$

We check  conditions {\bf C1.} and
{\bf C2.} in  Theorem \ref{teo:conv1}. 
For this, we distinguish the following cases:

\begin{enumerate}

\item $f_i\neq 0$ and $1+ (f_{i-1}+f_i+f_{i+1})/f_i \geq \epsilon^2$.
% or, equivalently, $f_{i-1}/f_0+ f_{i+1}/f_0 \geq \epsilon^2 -2 > -2$.
In this case we have that
$$
0 \leq \Gamma_\epsilon^{[1]} (f_{i-1},f_i,f_{i+1}) \leq  M_\epsilon % =\frac12\frac{1}{(1+\epsilon)^2-1} 
=\frac{1}{2\epsilon(2+\epsilon)},
$$
hence,
\begin{align*}
| \cF(f)_{2i+1}| \leq 2 M_\epsilon ||f||_\infty, \qquad  |(S_\epsilon^{[1]}f)_{2i+j}| \leq ||f||_\infty ( \frac12 + 2 M_\epsilon).
\end{align*}

\item  $f_i=0$, $f_{i+1} \cdot f_{i-1} \geq 0$. Then
\[ \cF(f)_{2i+1}=0, \qquad (S^{[1]}f)_{2i+j}= \frac12 f_i \quad \rightarrow \quad
|(S^{[1]}f)_{2i+j}| \leq \frac12 ||f||_\infty, \]

\item otherwise
\[ | \cF(f)_{2i+1}| \leq \frac18 ||f||_\infty, \qquad (S^{[1]}f)_{2i+j}= \frac12 f_i -\frac{1}{16} (f_{i-1}-f_{i+1})
\quad \rightarrow \quad |(S^{[1]}f)_{2i+j}| \leq \frac58
||f||_\infty. \]
\end{enumerate}
Then, condition {\bf C1.} clearly  holds for any value of
$\epsilon$. In addition, since $0<\epsilon\leq 2$,  condition {\bf
  C2.}  is satisfied with $L=1$ for those values of $\epsilon$  satisfying  
\[  \max \{\frac58, \frac12 +   2 M_\epsilon\}=\frac12 + 2 M_\epsilon
< 1 \quad \rightarrow \quad 
\epsilon>\sqrt{3}-1 \approx 0.7321. \]  
%Thus,  for $\epsilon >\sqrt{3}-1$, we can ensure that
%$||S^{[1]}f||_\infty \leq C_\epsilon ||f||_\infty$, with $C_\epsilon <1$. 
%i.e. {\bf C2.}  holds with  $L=1$. 
\end{proof}
Taking into account  Corollary \ref{cor:conv2}, $S_\epsilon$ is at least $\cC^{\beta-}$,
 $\beta=-\log_2(\frac12 + \frac{1}{\epsilon(2+\epsilon)})$. This is a
  conservative result, since $\beta \in (0,0.5)$ for $\epsilon \in
  (\sqrt{3}-1, 2]$. The question of smoothness will be considered in more
  detail in sections \ref{sec:smoothness} and \ref{sec:numeric}.

On the other hand, we observe that for $\epsilon=\sqrt{3}-1$, the
restriction in Proposition \ref{prop:repro0} is $\cos(|\gamma|)\geq 1-\sqrt{3}$,
which is satisfied when $\gamma \in
\imath[-\frac34\pi,\frac34\pi]$, as stated in the introduction.

\section{Monotonicity preservation} \label{sec:mono}

In applications dealing with increasing (or decreasing) sequences of
data, it  is often convenient to maintain this feature after recursive
refinement. In this section we examine the conditions that guarantee
that the family of nonlinear schemes in \eqref{eq:nonlinearSS} have
this property.

\begin{defi}%{Monotonicity preserving.}
$S$ is \emph{monotonicity preserving} if
$$ \nabla f^k_i \geq 0, \, (\leq 0) \quad \forall i\in\Z \quad \lra \quad
\nabla f^{k+1}_i\geq 0, \, (\leq 0) \quad \forall i\in\Z.$$
It is \emph{strictly monotonicity preserving} if the above
relations  hold with strict inequalities.
%$$ \nabla f^k>0, \, (<0) \quad \forall i\in\Z \quad \lra \nabla f^{k+1}>0,
%\, (<0) \quad \forall i\in\Z.$$
\end{defi}
%We show next that, for $\epsilon \in(0,\sqrt{2}]$,  $S_\epsilon$
%belongs to this special class of schemes.
\begin{teo} \label{teo:mono}
$S_\epsilon$ is (strictly) monotonicity preserving for $\epsilon \in [0 \sqrt{2}]$.
\end{teo}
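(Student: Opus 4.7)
The plan is to reduce monotonicity preservation of $S_\epsilon$ to a single scalar inequality on the first differences $d_j = \nabla f_j$, and then verify it by a short case analysis that pins down exactly why $\sqrt{2}$ is the right cut-off. Using the identity $f_{i+2}-f_{i+1}-f_i+f_{i-1} = d_{i+1}-d_{i-1}$, the insertion rule \eqref{eq:seps-def} becomes
$$ (S_\epsilon f)_{2i+1} = \tfrac12(f_i+f_{i+1}) - \Gamma_\epsilon(f_{i-1},f_i,f_{i+1},f_{i+2})\,(d_{i+1}-d_{i-1}), $$
and, since $S_\epsilon$ is interpolatory, the two consecutive increments of $S_\epsilon f$ are
$$ \nabla(S_\epsilon f)_{2i} = \tfrac12 d_i - \Gamma_\epsilon\,(d_{i+1}-d_{i-1}), \qquad \nabla(S_\epsilon f)_{2i+1} = \tfrac12 d_i + \Gamma_\epsilon\,(d_{i+1}-d_{i-1}). $$
Hence (strict) monotonicity preservation is equivalent to the (strict) inequality
$$ \Gamma_\epsilon(f_{i-1},f_i,f_{i+1},f_{i+2})\,|d_{i+1}-d_{i-1}| \le \tfrac12 d_i \qquad \forall i $$
holding for every sequence with $d_j \ge 0$.

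I would then split on whether $d_i$ vanishes. If $d_i=0$, monotonicity of $(f_j)$ forces $f_{i-1}\le f_i=f_{i+1}\le f_{i+2}$ (or the reverse), which is precisely the middle branch of \eqref{eq:GammaE-def} and yields $\Gamma_\epsilon = 0$; the inequality is trivial. If $d_i>0$, set $R = (f_{i+2}-f_{i-1})/(f_{i+1}-f_i) = (d_{i-1}+d_i+d_{i+1})/d_i$; monotonicity gives $R \ge 1$, so $1+R \ge 2$. This is where the hypothesis enters decisively: for $\epsilon \in [0,\sqrt{2}]$ we have $1+R \ge \epsilon^2$ automatically, so we land in the first branch of \eqref{eq:GammaE-def}, where
$$ \Gamma_\epsilon = \frac{1}{2\bigl(R + 2\sqrt{1+R}\bigr)}. $$
Estimating $|d_{i+1}-d_{i-1}| \le d_{i+1}+d_{i-1} = d_i(R-1)$ then gives
$$ \Gamma_\epsilon\,|d_{i+1}-d_{i-1}| \le \frac{(R-1)\,d_i}{2(R+2\sqrt{1+R})} \le \frac{d_i}{2}, $$
the last step reducing to the trivial inequality $-1 \le 2\sqrt{1+R}$.

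The strict version follows essentially for free: if all $d_j>0$ then both $d_{i-1}$ and $d_{i+1}$ are strictly positive, so either $|d_{i+1}-d_{i-1}| < d_{i+1}+d_{i-1}$ (when they differ) or the left-hand side vanishes while $d_i/2>0$, making the chain strict either way. The only genuine obstacle in this proof is conceptual rather than computational, namely recognising that the apparently arbitrary threshold $\epsilon \le \sqrt{2}$ is tailored precisely so that monotone data never triggers the ``otherwise'' branch $\Gamma_\epsilon = 1/16$ of \eqref{eq:GammaE-def}; in that branch $|d_{i+1}-d_{i-1}|$ is not controlled by any small multiple of $d_i$ and the estimate would fail. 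Once that observation is in place, everything else is an elementary one-line computation.
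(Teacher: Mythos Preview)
Your proof is correct and follows essentially the same path as the paper's: both reduce monotonicity preservation of $S_\epsilon$ to positivity preservation of $S_\epsilon^{[1]}$, observe that $\epsilon\le\sqrt{2}$ forces non-negative data into the first (or middle) branch of $\Gamma_\epsilon^{[1]}$, and then verify the resulting scalar inequality---the paper packages the same estimate via the auxiliary function $H(x,y)=(x-y)/(2+x+y+2\sqrt{2+x+y})$ and the bound $|H|<1$, which is exactly your inequality $(R-1)/(1+R+2\sqrt{1+R})<1$ in different variables. One harmless arithmetic slip: $(1+\sqrt{1+R})^2-1=1+R+2\sqrt{1+R}$, not $R+2\sqrt{1+R}$; since your denominator is smaller, your chain of inequalities is still valid a fortiori.
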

\begin{proof}
 Since $\nabla f^{k+1} = \nabla S_\epsilon f^k = S_\epsilon^{[1]}
 \nabla f^k$, and $S^{[1]}(\alpha
f)=\alpha S^{[1]}(f)$ $\forall \alpha \in \R$,  proving that $S_\epsilon$ is monotonicity preserving is
 equivalent to proving that $S^{[1]}_\epsilon$ is {\em positivity
   preserving}, i.e.
$$ f_i\geq0,\, \forall i\in\Z \quad \lra \quad S_\epsilon^{[1]}f_i\geq 0, \, \forall i\in\Z, \qquad \forall f\in\liZ.$$

First, we notice that if $f_i \geq 0$ $\forall i\in\Z$ and 
 $\epsilon \leq \sqrt{2}$
% Then $ 1+(f_{i-1}+f_i+f_{i+1})/f_i \geq 2 \geq \epsilon^2$ if $f_i\neq 0$. Hence, and since $\epsilon\leq \sqrt{2}$,
\begin{align} \label{eq:Gamma1+}
\Gamma^{[1]}_\epsilon (f_{i-1},f_i,f_{i+1}) &=
\begin{cases}
\frac12 
\frac{1}{\left(1+\sqrt{1+\frac{f_{i-1}+f_i+f_{i+1}}{f_i}}\right)^2-1}\,
,& f_i\neq 0 \\
0 \, , & f_i= 0.
\end{cases}
\end{align}

Thus, if $f_i =0$, $(S_\epsilon^{[1]}f)_{2i+j}=0$, $j=0,1$, while for 
 $f_i>0$ we can write
% we shall  prove that $(S_\epsilon^{[1]}f)_{2i+j}>0$. For this, we
% first notice that 
\[ \Gamma^{[1]}_\epsilon(f_{i-1},f_i,f_{i+1}) (f_{i-1} - f_{i+1} )=
\frac{f_i}{2}\frac{f_{i-1}/f_i
    - f_{i+1}/f_i}{(\sqrt{2+f_{i-1}/f_i + f_{i+1}/f_i}+1)^2-1}=
\frac{f_i}{2} H(f_{i-1}/f_i,f_{i+1}/f_i )\]
so that
\begin{equation} \label{eq:S_hache}
(S_\epsilon^{[1]}f)_{2i+j}=
% \frac12 f_i \left(1 +(-1)^j \frac{f_{i-1}/f_i     -
% f_{i+1}/f_i}{(\sqrt{2+f_{i-1}/f_i + f_{i+1}/f_i}+1)^2-1}\right)= 
\frac12 f_i \left (1+(-1)^j H(f_{i-1}/f_i,f_{i+1}/f_i) \right ),
\qquad j=0,1 ,
\end{equation}
with
\begin{equation} \label{eq:H}
H(x,y):=\frac{x - y }{(\sqrt{2+x + y}+1)^2-1} = \frac{x-y}{2+x + y +2\sqrt{2+x + y}}.
\end{equation}
%Then, if $f_i>0$, 
%and the result follows from observing that  . Indeed
By straightforward algebra, we have
%$H(f_{i-1}/f_i,f_{i+1}/f_i)\in (-1,1)$, $\forall f_{i-1},f_i,f_{i+1}>0$, where
$$-1 < \frac{-x-y}{2+x + y +2\sqrt{2+x + y}} \leq H(x,y)\leq
\frac{x+y}{2+x + y +2\sqrt{2+x + y}}<1,
$$
so that $H(x,y)\in (-1,1)$, $\forall x,y \geq 0$. This concludes the proof.
%In other words, we will prove that $H(x,y)<1$ for any $x,y>0$.
%It can be checked that $\partial_x H(x,y)>0, \partial_y H(x,y)<0$ for all $x,y\geq 0$. Hence, the minimum and the maximum of $H$ in $[0,M]\times[0,M]$ are at $(0,M)$ and $(M,0)$, respectively. They are
%$$\pm \frac{M }{(\sqrt{2+M}+1)^2-1}\in (-1,1).$$
%Then taking $M=\max\{x,y\}$, we have that $H(x,y)\in (-1,1)$.
%%Then $|H(x,y)|<1$ $\forall(x,y)\in\R^2$ and
%%$$\sup_{(x,y)\in\R^2} |H(x,y)| = \sup_{M>0} \sup_{(x,y)\in [0,M]^2} |H(x,y)| = 1.$$

%Notice that this  derivation also proves that $S_\epsilon$ is strictly
%monotonicity preserving.
\end{proof}

\begin{rmk} \label{rmk:pos_monotone}
For strictly positive (negative) data and  $\epsilon \in [0,
\sqrt{2}]$, $\Gamma_\epsilon^{[1]}(f_{i-1},f_i,f_{i+1})$ is given by
\eqref{eq:Gamma1+} (notice that it is
independent of the value of $\epsilon$). It can be easily seen that,
in this case,
\[ |\Gamma_\epsilon^{[1]}(f_{i-1},f_i,f_{i+1})| \leq
\frac{1}{2+2\sqrt{2}} \]
hence   
\begin{align*}
|(S_\epsilon^{[1]}f)_{2i+j}|&%\frac12 |f_i||1 +(-1)^j
%H(f_{i-1}/f_i,f_{i+1}/f_{i})| \leq ||f||_\infty 
 \leq (\frac12 +| 2\Gamma_\epsilon^{[1]}(f_{i-1},f_i,f_{i+1})
|)\|f\|_\infty \leq (\frac12 +%\frac14
\frac{1}{1+\sqrt{2}})\|f\|_\infty. 
\end{align*}
Condition \text{\textbf{C2.}} in Theorem
\ref{teo:conv1} is, thus, fulfilled with $L=1$ and
$\eta=\frac12+\frac{1}{1+\sqrt{2}}<1$. 
 Hence, if $f^0$ is a strictly monotone sequence, and  $\epsilon \in
 [0,\sqrt{2}]$, 
$S_\epsilon^\infty f^0$ is a continuous function that preserves the  monotonicity
properties of the initial data. This function does not depend on the
value chosen for the parameter $\epsilon$. 
 \end{rmk}

\section{Smoothness} \label{sec:smoothness}

In this section we examine the smoothness of the schemes
$S_\epsilon$. We shall see first that $S_\epsilon$ admits a \emph{first divided difference scheme}, which relates the
divided differences of the data produced by $S_\epsilon$ at
consecutive resolution levels.

 Divided    differences at level $k$ are
defined as  $d^k_i :=%\frac{f^k_{i+1}-f^k_i}{2^{-k}}= 
2^k \nabla    f^k_i, \,   \iinZ$, where $f^k= S_\epsilon f^{k-1}$, $k\geq 1$.
Since $S^{[1]}_\epsilon $  satisfies $2 S^{[1]}_\epsilon f= S^{[1]}_\epsilon 2f$,
$\forall f \in \liZ$,   we can write
$$ 2^{k+1}\nabla f^{k+1} = 2 S^{[1]}_\epsilon 2^{k}\nabla f^{k} \quad
\Longrightarrow  d^{k+1} = 2S^{[1]}_\epsilon d^{k}$$
so that   the \emph{first divided difference scheme} can be  defined as
$S_\epsilon^{(1)}:=2S_\epsilon^{[1]}$. 
It is well known (see  e.g. \cite{Dyn92}) that if $S_\epsilon^{(1)}$
is a convergent subdivision scheme, then $S_\epsilon$ 
converges to $\cC^1$ functions and
\begin{equation*} %\label{eq:S1equalDS}
((S_\epsilon^{(1)})^\infty d^0)(t) = \frac{d}{dt} (\Sinf_\epsilon f^0)(t).
\end{equation*}

In this section we study the conditions that guarantee convergence of the subdivision scheme $S_\epsilon^{(1)}$.
 As in \cite{Dyn92,Kuijt98}, for a given $d^0$ we shall define 
$\mathbb{P}^{k}(t)$ as  the continuous 
piecewise linear function such that 
\begin{equation} \label{eq:Pkdef}
 \mathbb{P}^{k}(i2^{-k}) = d^{k}_i, \qquad \forall \iinZ
\end{equation}
$d^k=S^{(1)}_\epsilon d^{k-1}$, $k\geq 1$,  and study the conditions
that ensure that $(\mathbb{P}^{k})_{k\geq 0}$ is a Cauchy sequence. 
%, %which   %would imply that $S_\epsilon^{(1)}$
%%is convergent, at least for a restricted class of initial data $d^0$. % 

Observe that 
$$ \norma{ \mathbb{P}^{k+1} -  \mathbb{P}^{k}} = \sup_\iZ
\max\{|d^{k+1}_{2i} - d^{k}_i| , |d^{k+1}_{2i+1} -
\frac12(d^{k}_i+d^{k}_{i+1})|\} 
%\overset{k\rightarrow\infty}{\longrightarrow}0. 
$$
and that, using that $d^k_i = \frac12(d^{k+1}_{2i}+d^{k+1}_{2i+1})$,
\begin{align*}
|d^{k+1}_{2i} - d^{k}_i| &= \frac12|d^{k+1}_{2i+1} - d^{k}_{2i}| \leq \frac12 \norma{\nabla d^{k+1}}\\
|d^{k+1}_{2i+1} - \frac12(d^{k}_i+d^{k}_{i+1})| &= |d^{k+1}_{2i+1} - \frac14(d^{k+1}_{2i}+d^{k+1}_{2i+1}+d^{k+1}_{2i+2}+d^{k+1}_{2i+3})|
\\
&= \frac14 |d^{k+1}_{2i+1} - d^{k+1}_{2i}+ 2(d^{k+1}_{2i+1} - d^{k+1}_{2i+2})+d^{k+1}_{2i+2} - d^{k+1}_{2i+3}| \leq \norma{\nabla d^{k+1}}.
\end{align*}
As a consequence, we have that $\norma{ \mathbb{P}^{k+1} -
  \mathbb{P}^{k}} \leq \norma{\nabla d^{k+1}}$, so that the
convergence of $S_\epsilon^{(1)}$ follows from proving that $\norma{\nabla d^{k}} \overset{k\to\infty}{\lra} 0$.
Notice that for $d^k_i\neq 0$
$$ d^k_{i+1} - d^k_i = d^k_i \left ( \frac{ d^k_{i+1}}{d^k_i} - 1
\right )$$
hence, we might prove the desired result by checking the following two
conditions 
\begin{equation} \label{eq:cond0}
\text{(a) } \quad \norma{d^k}<M<\infty, \quad \forall k\geq 0 \qquad
\text{(b) } \quad \lim_{k\to\infty} \sup_\iinZ | \frac{
  d^k_{i+1}}{d^k_i} - 1|= 0,
\end{equation}
at least for a restricted class of initial data $d^0$ such that
$d^0_i\neq 0$, $\forall \iinZ$.
%which would imply that $\lim_{k\to\infty}\norma{\nabla d^{k}}=0$, at
%least for data such that $d_i^k\neq 0$, $\forall i, k$.

Notice that if the initial data satisfy $d^0_i>0(<0)$, $\forall\iinZ$
and $\epsilon \in [0,\sqrt{2}]$, then   Theorem
\ref{teo:mono} ensures that $d^k_i\neq 0$ $\forall i, k$. Since 
 the repeated application  
of $S_\epsilon$ preserves strict monotonicity, we have that
 $d^k_i>0 (<0)$, $\forall k\geq0 \,
\forall\iinZ$.
% provided that $d^0_i>0 (<0)$ $\forall\iinZ$. 

Hence, throughout this section we shall assume that we have strictly monotone
data and we shall silently assume that $\epsilon \in [0,\sqrt{2}]$. In  Remark
\ref{rmk:pos_monotone} we observed that, in this case,
% when applied to strictly monotone data, 
$S_\epsilon$ does not depend on the value of $\epsilon$
and is convergent. %for any $\epsilon \in (0, \sqrt{2}]$, 
%and  its formal expression is independent of the value of $\epsilon$. 
From 
\eqref{eq:S_hache} we get  the expression of $S_\epsilon^{(1)}$
 in this case,  
\begin{equation} \label{eq:S1expression}
 (S_\epsilon^{(1)} d)_{2i+j}= \Psi_j(d_{i-1},d_{i},d_{i+1}) :=
 d_i\left (1+(-1)^j H(\frac{d_i}{d_{i-1}},\frac{d_{i+1}}{d_i})\right ) , \qquad j=0,1,
\end{equation}
%Looking at \eqref{eq:S1expression}, and 
where $H(x,y)$ is defined in \eqref{eq:H}. Notice that  $\Psi_j$, $j=0,1$, are
1-homogeneous functions. In addition, the results
obtained in the previous section ensure that %  the refinement rules
$\Psi_j: \R^3_+ \rightarrow \R_+$, $j=0,1$, ($R_+=(0,+\infty)$), i.e. they
are {\em positive functions}, when restricted to positive data. They
are also smooth, being  compositions of
smooth functions that are always well-defined for positive data. 

We shall examine next what are the conditions to ensure that  
\eqref{eq:cond0}-(b) is satisfied. As in \cite{Kuijt98}, given an
initial sequence $d^0$, with $d^0_i >0 (<0)$, and $d^k=
S_\epsilon^{(1)} d^{k-1}$, $k\geq 1$, we define
%notice that for $ j=0,1. \qquad$
%\begin{equation} \label{eq:S1expression}
% (S_\epsilon^{(1)} d^k)_{2i+j}=%: \Psi_j(d^k_{i-1},d^k_{i},d^k_{i+1}) =
% d^{k}_i\left (1+(-1)^j H(r^k_{i-1},R^k_{i+1})\right ) , \qquad
% R^k_i:= \frac{d^k_{i+1}}{d^k_i}, \quad r^k_i = 1/R^k_i, 
%\end{equation}
 \begin{equation} \label{eq:rho}
 R^k_i:= \frac{d^k_{i+1}}{d^k_i}, \quad r^k_i = 1/R^k_i, \qquad 
\rho^k:=\sup_{i\in\Z}\left\{\left|r^k_{i}-1\right|,\left|R^k_i-1\right|\right\},
\end{equation}
and study the
behavior of the sequence $\rho^k$.  Obviously, proving that $\lim_{k
  \rightarrow \infty} \rho^k =0$ leads to \eqref{eq:cond0}-(b). As a previous
  step,  we need to restrict the
class of initial data to ensure that $\rho^k <+\infty, \quad \forall
k\geq 0$. 

\begin{lem} Let $d^k \in \liZ$, $d^k_i>0 (<0)$, and $d^{k+1}=
S_\epsilon^{(1)} d^{k}$, $k\geq 0$. Then, with the definitions in
\eqref{eq:S1expression}-\eqref{eq:rho}, if $\rho^k < +\infty$, then $\rho^{k+1}<+\infty$ 
%$\forall k\geq 1$.
\end{lem}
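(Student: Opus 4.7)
The plan is to express the level-$(k+1)$ ratios $R^{k+1}_m$ and $r^{k+1}_m$ explicitly in terms of the level-$k$ ratios using the $1$-homogeneity of $\Psi_j$, and then to bound those explicit expressions using the compactness enforced by $\rho^k<\infty$.

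First, since $d^k$ is of constant sign, I would set $H_i:=H\bigl(R^k_{i-1},R^k_i\bigr)$ (or $H(r^k_{i-1},R^k_i)$, depending on which of the two equivalent normalizations of \eqref{eq:S1expression} is used), so that
$$d^{k+1}_{2i}=d^k_i(1+H_i),\qquad d^{k+1}_{2i+1}=d^k_i(1-H_i).$$
Taking consecutive ratios gives the closed-form formulas
$$R^{k+1}_{2i}=\frac{1-H_i}{1+H_i},\qquad R^{k+1}_{2i+1}=R^k_i\,\frac{1+H_{i+1}}{1-H_i},$$
together with the analogous expressions for $r^{k+1}_m$.

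Second, the hypothesis $\rho^k<\infty$ combined with the definition \eqref{eq:rho} forces a two-sided positive bound
$$\frac{1}{1+\rho^k}\le R^k_i,\,r^k_i\le 1+\rho^k\qquad\forall i\in\Z,$$
so every pair of arguments fed into $H$ lies in a single compact set $K\subset(0,\infty)^2$ independent of $i$. On $K$, the strict inequality $|H(x,y)|<1$ already used in the proof of Theorem \ref{teo:mono} (namely $|x-y|<2+x+y+2\sqrt{2+x+y}$ for $x,y>0$) together with continuity and compactness yield a uniform bound $|H_i|\le\bar H$ for some $\bar H=\bar H(\rho^k)<1$.

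Inserting this into the explicit formulas, the factors $1\pm H_i$ lie in the positive interval $[1-\bar H,1+\bar H]$, and combined with the bound on $R^k_i$, both $R^{k+1}_{2i}$ and $R^{k+1}_{2i+1}$ (and their reciprocals) are controlled above and bounded away from zero uniformly in $i$, which gives $\rho^{k+1}<\infty$. The only real subtlety is the \emph{uniform} strict bound $\bar H<1$ on $K$; this is what the assumption $\rho^k<\infty$ buys us, since it prevents the arguments of $H$ from escaping to $0$ or $\infty$ where the estimate $|H|<1$ would degenerate.
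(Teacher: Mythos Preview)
Your argument is correct. You write out the level-$(k+1)$ ratios explicitly in terms of $H_i=H(r^k_{i-1},R^k_i)$ and $R^k_i$, observe that $\rho^k<\infty$ confines all arguments of $H$ to the compact box $[1/(1+\rho^k),1+\rho^k]^2\subset(0,\infty)^2$, and then invoke continuity of $H$ together with the strict inequality $|H|<1$ from the proof of Theorem~\ref{teo:mono} to extract a uniform $\bar H<1$. The rest is algebra.

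The paper takes a different route. Instead of isolating $H$, it packages the ratios into the smooth functions
\[
G_1(r,R)=\frac{\Psi_1(r,1,R)}{\Psi_0(r,1,R)},\qquad
G_2(r,R,R')=\frac{\Psi_0(1,R,RR')}{\Psi_1(r,1,R)},
\]
so that $R^{k+1}_{2i}=G_1(r^k_{i-1},R^k_i)$ and $R^{k+1}_{2i+1}=G_2(r^k_{i-1},R^k_i,R^k_{i+1})$, and then applies the Mean Value Theorem to $G_j$ around the point $\uno$ (where $G_j(\uno)=1$) to bound $|R^{k+1}-1|$ by $\rho^k\cdot\max_K\|\nabla G_j\|_1$. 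Finiteness then follows from the smoothness of $G_j$ on the relevant compact set. Your approach is more elementary and transparent for this particular lemma, and it makes the role of the hypothesis $\rho^k<\infty$ (preventing the arguments of $H$ from escaping to $0$ or $\infty$) very explicit. The paper's approach, on the other hand, is set up precisely so that it can be reused verbatim in the next step (Lemma~\ref{lem:contractivitat}), where one needs not just finiteness but the quantitative contraction $\rho^{k+2}\le\eta\rho^k$; there the values $\|\nabla G^2_j(\uno_4)\|_1\le 3/4$ do the work, and the MVT framework carries over without change.
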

\begin{proof}
Using  \eqref{eq:S1expression} and \eqref{eq:rho} we can write
$$R^{k+1}_{2i} =\frac{\Psi_1(d^{k}_{i-1},d^{k}_{i},d^{k}_{i+1})}{\Psi_0(d^{k}_{i-1},d^{k}_{i},d^{k}_{i+1})} = \frac{d^{k}_{i}\Psi_1(d^{k}_{i-1}/d^{k}_{i},1,d^{k}_{i+1}/d^{k}_{i})}{d^{k}_{i}\Psi_0(d^{k}_{i-1}/d^{k}_{i},1,d^{k}_{i+1}/d^{k}_{i})}
= \frac{\Psi_1(r^{k}_{i-1},1,R^{k}_{i})}{\Psi_0(r^{k}_{i-1},1,R^{k}_{i})}=:G_1(r^{k}_{i-1},R^{k}_{i}).$$\\
Since  $G_1$ is smooth when applied to positive data, % and
                                % $G_1(1,1)=1$, 
applying the Mean Value Theorem we get  
\begin{align} \label{eq:MVT1} 
R^{k+1}_{2i}-1 &= G_1(r^{k}_{i-1},R^{k}_i) - G_1(1,1) =
\nabla G_1(\hat r^k_i,\hat R^k_i) \cdot \left ((r^{k}_{i-1},R^{k}_i) -
  (1,1)\right )
%\leq \max_{r,R \in K} \{ \| \nabla G_1(r,R) \|_1 \} 
%\| \| (r^{k}_{i-1},R^{k}_i) - (1,1) \|_\infty \nonumber\\
\end{align}
where $(\hat r^k_i,\hat R^k_i)$ belongs to the segment joining the
points $(1,1)$ and $(r^{k}_{i-1},R^{k}_i)$. Hence,
\begin{align}
|R^{k+1}_{2i}-1| 
&\leq \rho^{k} \max_{r,R\in K}  \| \nabla G_1(r,R) \|_1  , \label{eq:fitaR2i}
\end{align}
where $K$ is the ball of center $(1,1)$ and radius $\rho^k$. The same
type of argument can be used to obtain similar bounds 
for $|R^{k+1}_{2i+1}-1|$ and $|r^{k+1}_{2i+j}-1|$, $j=0,1$, which
proves the result.
\end{proof}

\newcommand{\lpinf}{l_\infty^+(\Z)}

\begin{rmk} \label{rmk:linf+}
Given $d \in \liZ$ with $d_i>0 (<0)$ $\forall i \in
  \Z$, we define
\begin{equation*} %\label{eq:rho-def0}
 \rho(d):=\sup_{i\in\Z}\left\{\left|r_{i}-1\right|,\left|R_i-1\right|\right\},
 \qquad  R_i:= \frac{d_{i+1}}{d_i}, \quad r_i = 1/R_i. 
\end{equation*}
Then, according to the previous lemma, $\rho: \lpinf \rightarrow
\R_+$, where
\[ \lpinf= \{ d \in \liZ: \, d_i \cdot d_{i+1} >0 \, \forall
  i \in \Z \text{ and } 
\sup_{i\in\Z}\left\{\left|\frac{d_{i}}{d_{i+1}}-1\right|,\left|\frac{d_{i+1}}{d_i}-1\right|\right\}< +\infty \} \]
and if $\rho^0=\rho(d^0)$ then $\rho^k$ in \eqref{eq:rho} satisfies
$\rho^k= \rho(d^k)$ with  $d^k= S_\epsilon^{(1)} d^{k-1}$, $k\geq 1$.

\end{rmk}
%define $\rho^k:=\rho(d^k)$, i.e.

%as follows:

%In order to prove that the ratios $R^k_i,r^k_i$ converge uniformly
%to 1,

We will  show that,  at least under an appropriate restriction on
$\rho^0=\rho(d^0)$, the sequence
$(\rho^k)_{k\geq 0}$ converges to zero.
% with an appropriate  decay rate. % This obviously imples the desired result.  
As a first attempt, we try to find $\eta$ and $\delta$ such that 
\begin{equation} \label{eq:rho:L1}
\rho^0<\delta \quad \Longrightarrow \quad \rho^{k+1} \leq \eta \rho^{k}, \qquad \forall k\geq 0.
\end{equation}
Obviously, if we could prove \eqref{eq:rho:L1} with $0<\eta<1$, we would  obtain
the desired result. However,
we shall see that we can only expect \eqref{eq:rho:L1} to 
hold for values of 
$\eta $ greater than 1. Nevertheless, we will be able to prove that given  $\eta\in (\frac34,1)$ there exists $\delta_\eta$ such that
\begin{equation} \label{eq:rho:L2}
\rho^0<\de \quad \Longrightarrow \quad \rho^{k+2} \leq \eta \rho^{k}, \qquad \forall k\geq 0,
\end{equation}
which will allow us to prove the required convergence result.

Let us start by analyzing \eqref{eq:rho:L1}.  From \eqref{eq:MVT1}, 
and taking into account that $||\nabla G_1(1,1)||_1 = \frac12$ (see the Appendix for details), given $\eta\in(\frac12,1)$ we can find $\delta_1>0$ such that
$$ \|(r,R)-(1,1)\|_\infty < \delta_1 \quad \Longrightarrow \quad \|
\nabla G_1(r,R)\|_1 \leq \eta.$$
Hence, from \eqref{eq:fitaR2i}, we get that if $\rho^k<\delta_1$
\begin{align*}
|R^{k+1}_{2i}-1| 
&\leq \rho^{k} \max_{r,R\in K}  \| \nabla G_1(r,R) \|_1 \leq \eta \,  \rho^k.
\end{align*}

On the other hand, the same type of arguments for $R^{k+1}_{2i+1}$
lead to the following:
$$R^{k+1}_{2i+1} = \frac{\Psi_0(d^{k}_{i},d^{k}_{i+1},d^{k}_{i+2})}{\Psi_1(d^{k}_{i-1},d^{k}_{i},d^{k}_{i+1})}
= \frac{\Psi_0\left (1,\frac{d_{i+1}^k}{d_i^k},\frac{d_{i+2}^k}{d_{i+1}^k}\frac{d_{i+1}^k}{d_i^k}\right )}{\Psi_1\left (\frac{d_{i-1}^k}{d_i^k},1,\frac{d_{i+1}^k}{d_i^k}\right )}
%\frac{\Psi_0(1,R^{k}_{i},R^{k}_{i}R^{k}_{i+1})}{\Psi_1(r^{k}_{i-1},1,R^{k}_{i})}
=:G_2(r^{k}_{i-1},R^{k}_{i},R^{k}_{i+1}).$$
It is shown in the appendix that $\|\nabla G_2(1,1,1)\|_1 = 1$, thus $|R^{k+1}_{2i+1}-1|$ cannot be bounded as before using any
$\eta< 1$ and we cannot expect \eqref{eq:rho:L1} to hold for any $\eta <1$. 
We then turn to analyze  \eqref{eq:rho:L2}. 
\begin{lem} \label{lem:contractivitat}
Given $\eta\in(\frac34,1)$, there exists  $\de>0$ 
%$\exists \delta= \delta(\eta)$ 
such that if $\rho^0<\de$ then
$ \rho^{k+2} \leq \eta \rho^{k}, \qquad \forall k\geq 0. $
\end{lem}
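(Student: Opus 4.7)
The plan is to linearize two consecutive applications of $S_\epsilon^{(1)}$ around the fixed point where every $R_i = r_i = 1$, and to extract the desired contraction from the composition rather than from a single step. Using the smoothness of $\Psi_0,\Psi_1$ on the strictly positive cone, each of $R^{k+1}_{2i+l}-1$ and $r^{k+1}_{2i+l}-1$ (for $l=0,1$) admits a Taylor expansion as an affine combination of a few of the $r^k_\ast-1,\;R^k_\ast-1$, with a quadratic remainder bounded by $C(\rho^k)^2$ uniformly on any fixed small neighborhood of the fixed point. Using the precise values of the first-order coefficients (computed in the Appendix), the even-indexed outputs at level $k+1$ are already contracted by $\|\nabla G_1(1,1)\|_1=\tfrac12$, while the odd-indexed ones are only bounded by $\|\nabla G_2(1,1,1)\|_1=1$ and hence do not contract in one step.

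Next I would iterate by substituting the level-$(k+1)$ expansions into the level-$(k+2)$ expansions; each of $R^{k+2}_{2j+l}-1$, $r^{k+2}_{2j+l}-1$ then becomes an affine combination of at most four of the $R^k_\ast-1,\;r^k_\ast-1$, modulo an $O((\rho^k)^2)$ remainder. The crucial step is to verify, in each of the (up to eight) parity cases arising from $j$ even or odd and $l=0,1$, that the $\|\cdot\|_1$-norm of the composite coefficient vector is at most $\tfrac34$. Parity is what makes things work: in each case the only non-contracted level-$(k+1)$ factor enters either through a small off-centre entry of the outer Jacobian, or (in the "bad-bad" case, e.g.\ $R^{k+2}_{4m+3}$) through an explicit cancellation between positive and negative entries of the composite linear map that reduces the naive bound $\|\nabla G_2\|_1\cdot\|\nabla G_2\|_1 = 1$ to $\tfrac34$. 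In every case one obtains
\[ \rho^{k+2} \leq \tfrac34\,\rho^k + C'(\rho^k)^2. \]

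Finally, given $\eta\in(\tfrac34,1)$, I would pick $\delta_\eta>0$ so small that (i) $C'\delta_\eta \leq \eta-\tfrac34$, and (ii) the crude one-step bound $\rho^{k+1}\leq (1+C'')\rho^k$, also obtainable from the same Taylor expansion, keeps the whole sequence $(\rho^k)_{k\geq 0}$ inside the neighborhood on which the Taylor estimates are valid. An induction on $k$ then yields $\rho^{k+2}\leq \eta\,\rho^k$ for every $k\geq 0$. The main obstacle is the case-by-case bookkeeping in the second step: although each individual composite gradient is a short computation at the fixed point, one has to track signs carefully in the "bad-bad" case, where the target $\tfrac34$ bound relies on a genuine sign cancellation rather than just the triangle inequality.
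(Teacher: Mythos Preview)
Your proposal is correct and follows essentially the same route as the paper. The paper phrases the linearization via the Mean Value Theorem applied directly to the two-step ratio maps $G^2_j$ (expressing $R^{k+2}_{4i+j}$ in terms of $r^k_{i-2},r^k_{i-1},R^k_i,R^k_{i+1}$) rather than via a Taylor expansion with explicit quadratic remainder, and it computes the composite gradients $\nabla G^2_j(\uno_4)$ in one shot in the Appendix using the chain rule; this is exactly your ``substitute the level-$(k+1)$ expansions into the level-$(k+2)$ expansions'' step, and the resulting four gradients have $\|\cdot\|_1$-norms $\tfrac{5}{16},\tfrac14,\tfrac{5}{16},\tfrac34$, with the $\tfrac34$ occurring precisely in your ``bad-bad'' case $j=3$ and indeed requiring the sign cancellation you anticipate. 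The bootstrap you sketch (choose $\delta_\eta$ so that the crude one-step bound keeps $\rho^1$ in the good neighborhood, then induct) is also exactly what the paper does, so there is no substantive difference between the two arguments.
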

\begin{proof}
We  shall use the same technique as before to examine
\[  |R^{k+2}_{4i+j}- 1| \quad \text{and} \quad  |r^{k+2}_{4i+j}-1|
\qquad j=0,1,2,3. \]

Let us denote the four rules that define  $(S_\epsilon^{[1]})^2$ as follows:
\[ (S_\epsilon^{[1]}S_\epsilon^{[1]} d)_{4i+j}  =:
\Psi^2_j(d_{i-2},d_{i-1},d_i,d_{i+1},d_{i+2}), \qquad j=0,1,2,3\]
where, without loss of generality, we have assumed that $\Psi^2_j: \R^5_+
\rightarrow \R_+$, $j=0,1,2,3$. The specific form of these (smooth)
functions can be found in the
Appendix, as well as the required
computations, which are lengthy but straightforward. In what follows
we give only a  sketch of the proof.

Since the functions $\Psi_j^2$ are 1-homogeneous and smooth, we can write 
\[ \begin{cases} %begin{align*}
 R^{k+2}_{4i+j} &= \displaystyle{\frac{\Psi^2_{j+1}(d_{i-2}^k,\cdots, d^k_{i+2})}{\Psi^2_j(d_{i-2}^k,\cdots, d^k_{i+2})}}=
G^2_j(r^{k}_{i-2},r^{k}_{i-1},R^{k}_{i},R^{k}_{i+1}), \\[10pt]
r^{k+2}_{4i+j}&=\displaystyle{\frac{1}{ R^{k+2}_{4i+j}}=
\frac{1}{G^2_j(r^{k}_{i-2},r^{k}_{i-1},R^{k}_{i},R^{k}_{i+1})}}, 
\end{cases} \quad j=0,1,2,3. \] %\end{align*}
It is easy to see that  $G^2_j(\uno_4)=1$, $\uno_4:=(1,1,1,1)$. In
addition, 
\[\nabla \frac{1}{G^2_j}(\uno_4)=
-\frac{ \nabla G_j^2(\uno_4)}{G_j^2(\uno_4)^2} = -\nabla G_j^2(\uno_4).\]

 In the appendix we
carry out all the necessary computations to  obtain the values  of 
$\|\nabla G^2_j(\uno_4) \|_1$, which are displayed in Table \ref{tab:gradient2}.
\begin{table}[!h]
\centering
\begin{tabular}{|c|c|c|c|c|} \hline
&&&&\\[-12pt]
$j$ & 0 & 1 & 2 & 3 \\\hline
&&&&\\[-10pt]
$\|\nabla G^2_{j}(\uno_4)\|_1$ &  \, 5/16 \, & \, 1/4 \, & \, 5/16 \, & \, 3/4 \, \\[2pt]\hline
\end{tabular}
\caption{The 1-norm of the ratio functions $G^2_j$ for $j=0,1,2,3$.} 
\label{tab:gradient2}
\end{table}

The functions $G^2_j:\R^4_+ \rightarrow \R_+ $ are smooth and satisfy 
$G^2_j(\uno_4)=1$,  $\|\nabla G^2_j(\uno_4) \|_1 \leq \| \nabla
G^2_3(\uno_4) \|_1 = \frac34$. Then using  the Mean Value Theorem as
before we know that,  given $\eta\in(\frac34,1)$ exists
$\delta$ such that if $\rho^{k}<\delta$ 
$$\rho^{k+2}= \max_{\iinZ} \max _{0\leq j \leq 3} \{ |R^{k+2}_{4i+j}-1|,
|r^{k+2}_{4i+j}-1| \} \leq \eta \rho^{k}.$$

To conclude the proof, we notice that given $\eta \in (3/4,1)$ and $k$ such that
$\rho^{k}<\delta$, we have that $\rho^{k+2}<\eta \rho^k
<\delta$. Thus, if $\rho^{0},\rho^{1}<\delta$, then
$\rho^{k}<\delta$ for all $k\geq 0$. 
Since we know that we can find $\delta^1>0$ and $\eta^1>1$ such that if
$\rho^0<\delta^1$, then  $\rho^1 \leq \eta^1 \rho^0$, taking
$\de:=\min \{ \delta/\eta^1,\delta^1\}$ we have that if
$\rho^0<\de \leq \delta^1$, then   $\rho^{1}< \eta^1 \rho^0 <
\eta^1 \de \leq \eta^1 \delta/\eta^1 =\delta$. 
Therefore if $\rho^0 < \de$, then $\rho^0, \rho^1 <
\delta$ and the result holds.
\end{proof}

Then, we can prove that $\rho^k$ decreases at least as fast as $\eta^{\frac{k}{2}}$.
\begin{prop} \label{prop:contractivitat}
Given $\eta\in(\frac34,1)$, there exists $ \de>0$ such that if
$\rho^0<\de$, 
\begin{equation*} %\label{eq:rho:L2:lem}
\rho^{k+1} < \eta^\frac{k}{2} \max\{\rho^1,\rho^0\}, \qquad  \forall k\geq 0.
\end{equation*}
\end{prop}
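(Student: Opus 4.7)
The plan is to iterate Lemma \ref{lem:contractivitat} separately along the even-indexed and odd-indexed subsequences of $(\rho^k)_{k\geq 0}$ and then combine the two estimates through $\max\{\rho^0,\rho^1\}$.

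Fix $\eta\in(\tfrac34,1)$ and let $\de$ be the constant supplied by Lemma \ref{lem:contractivitat}. Under the hypothesis $\rho^0<\de$, the lemma gives the two-step contraction $\rho^{k+2}\leq \eta\,\rho^k$ for every $k\geq 0$. A straightforward induction on $m$, applied with $k=0,2,4,\ldots$, yields
\[ \rho^{2m} \leq \eta^m\,\rho^0, \qquad m\geq 0, \]
and the analogous induction starting from $k=1$ gives
\[ \rho^{2m+1} \leq \eta^m\,\rho^1, \qquad m\geq 0. \]

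To recover the uniform bound in the statement, I would split the index $k$ according to parity. If $k=2m$, then $\rho^{k+1}=\rho^{2m+1}\leq \eta^{m}\rho^1 = \eta^{k/2}\rho^1 \leq \eta^{k/2}\max\{\rho^0,\rho^1\}$. If $k=2m-1$ with $m\geq 1$, then $\rho^{k+1}=\rho^{2m}\leq \eta^{m}\rho^0 = \eta^{(k+1)/2}\rho^0 < \eta^{k/2}\rho^0 \leq \eta^{k/2}\max\{\rho^0,\rho^1\}$, where the strict inequality uses $\eta^{1/2}<1$ together with $\rho^0>0$ (which is automatic under the strict-monotonicity hypothesis in force throughout this section). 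Either way, $\rho^{k+1}<\eta^{k/2}\max\{\rho^0,\rho^1\}$.

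There is essentially no real obstacle here beyond routine bookkeeping, since Lemma \ref{lem:contractivitat} already provides all the contractivity that is needed; the present statement is just a repackaging of the two parallel decays. The only mildly delicate point is obtaining strict inequality in the even-$k$ case, where the naive chain gives only $\leq$: this can be handled either by applying Lemma \ref{lem:contractivitat} with an auxiliary $\eta'\in(\tfrac34,\eta)$ (so that $\eta'^m\rho^1<\eta^m\rho^1$), or by absorbing the strictness into the odd-$k$ slack via the factor $\eta^{1/2}<1$ produced by the parity mismatch.
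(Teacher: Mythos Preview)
Your proof is correct and follows essentially the same route as the paper: iterate Lemma \ref{lem:contractivitat} separately along the even- and odd-indexed subsequences, then combine via $\max\{\rho^0,\rho^1\}$. You actually go slightly beyond the paper by flagging the strict-versus-nonstrict inequality in the even-$k$ case; the paper's own proof writes $\rho^{k+1}\leq\eta^{k/2}\rho^1$ there without further comment, so your remark about patching with an auxiliary $\eta'\in(\tfrac34,\eta)$ is a genuine improvement in rigor.
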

\begin{proof}
Given $\eta\in(\frac34,1)$, we apply the previous Lemma and 
consider separately the cases of  $k$ even or odd. Then, for $\rho^0<\de$, 
\begin{align*}
k=2k' \quad \longrightarrow \quad \rho^{k+1} &= \rho^{2k'+1} \leq \eta
\rho^{2(k'-1)+1} \leq \cdots \leq \eta^{k'} \rho^{1}=\eta^{\frac{k}{2}}
  \rho^{1},\\
k=2k'+1 \quad \longrightarrow \quad \rho^{k+1} &= \rho^{2(k'+1)} \leq
\eta \rho^{2k'} \leq \cdots \leq \eta^{k'+1} \rho^{0}<   \eta^{\frac{k'+1}{2}} \rho^{0}< \eta^{k/2} \rho^0.
\end{align*}
\end{proof}

The next result shows that the growth of the divided finite
differences at each level or refinement also depends on $\rho^k$. Then
we can prove \eqref{eq:cond0}-(a).
\begin{lem} \label{lem:majorat} Assume that $d^k_i> 0$ (or $d^k_i<0$), $\forall
  \iinZ$. Then, 
\begin{equation*}
 (1 - \frac{\rho^k}{1+\sqrt{2}}  ) |d^{k}_i| \leq |d^{k+1}_{2i+j}| \leq (1 + \frac{\rho^k}{1+\sqrt{2}}) |d^{k}_i|, \qquad j=0,1.
% \label{eq:majorat}
 \end{equation*}
\end{lem}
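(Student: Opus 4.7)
The plan is to exploit the explicit formula \eqref{eq:S_hache} for $S_\epsilon^{[1]}$ (valid under the one-sign hypothesis on $d^k$) together with the identity $S_\epsilon^{(1)} = 2S_\epsilon^{[1]}$ to write
\begin{equation*}
d^{k+1}_{2i+j} \;=\; d^k_i \left( 1 + (-1)^j H(r^k_{i-1}, R^k_i) \right), \qquad j=0,1,
\end{equation*}
where $H$ is the function in \eqref{eq:H} and $r^k_{i-1}, R^k_i$ are the ratios of \eqref{eq:rho}. Taking absolute values and using $1 - |H| \leq |1 + (-1)^j H| \leq 1 + |H|$, the whole lemma reduces to the single pointwise estimate
\begin{equation*}
|H(r^k_{i-1}, R^k_i)| \;\leq\; \frac{\rho^k}{1+\sqrt{2}}, \qquad \forall\, \iinZ.
\end{equation*}

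To establish this estimate I would bound the numerator and the denominator of $H$ separately. The triangle inequality gives
\begin{equation*}
|r^k_{i-1} - R^k_i| \;\leq\; |r^k_{i-1} - 1| + |R^k_i - 1| \;\leq\; 2\rho^k.
\end{equation*}
For the denominator, the one-sign hypothesis on $d^k$ forces $r^k_{i-1}, R^k_i > 0$, so $s := 2 + r^k_{i-1} + R^k_i > 2$; since the map $s \mapsto s + 2\sqrt{s}$ is increasing on $(0,\infty)$,
\begin{equation*}
(\sqrt{s}+1)^2 - 1 \;=\; s + 2\sqrt{s} \;\geq\; 2 + 2\sqrt{2} \;=\; 2(1+\sqrt{2}).
\end{equation*}
Dividing yields $|H(r^k_{i-1}, R^k_i)| \leq 2\rho^k / [2(1+\sqrt{2})] = \rho^k/(1+\sqrt{2})$. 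Multiplying back by $|d^k_i|$ then produces both inequalities of the lemma when $d^k_i > 0$; the case $d^k_i < 0$ is handled by applying the argument to $-d^k$, using the 1-homogeneity $S_\epsilon^{(1)}(-d^k) = -S_\epsilon^{(1)} d^k$ and the fact that $\rho^k$ depends only on ratios and is hence invariant under a global sign flip.

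No substantial obstacle is expected here; the only point worth emphasizing is that the one-sign hypothesis is exactly what is needed to guarantee positivity of the arguments of $H$, which in turn furnishes the uniform lower bound $2(1+\sqrt{2})$ on the denominator and the clean constant $1/(1+\sqrt{2})$ in the conclusion. Observe also that when $\rho^k \geq 1+\sqrt{2}$ the factor $1 - \rho^k/(1+\sqrt{2})$ is non-positive, in which case the left inequality holds trivially; the lemma is genuinely informative in the small-$\rho^k$ regime produced by Proposition \ref{prop:contractivitat}, which is the regime relevant for the forthcoming $\cC^1$ analysis.
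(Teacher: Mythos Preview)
Your proposal is correct and follows essentially the same route as the paper: both start from the formula $d^{k+1}_{2i+j}=d^k_i(1+(-1)^jH(r^k_{i-1},R^k_i))$, bound the denominator of $H$ below by $2+2\sqrt{2}$ using positivity of the ratios, and bound the numerator $|r^k_{i-1}-R^k_i|$ by $2\rho^k$ (the paper via $|x-y|\le 2\max\{|x-1|,|y-1|\}$, you via the triangle inequality). Your remarks on the sign-flip for negative data and the triviality of the lower bound when $\rho^k\ge 1+\sqrt{2}$ are correct extra observations the paper leaves implicit.
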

\begin{proof}
As stated in (\ref{eq:S1expression}), for this type of data
\[d^{k+1}_{2i+j}= d^{k}_i\left (1+(-1)^j H(r^k_{i-1}, R^k_i)\right ), \qquad j=0,1.\]
It is very easy to check that $\forall x,y \geq 0$
$$ |H(x,y)| \leq  \frac{1}{2+2\sqrt{2}}|x-y| 
%\leq \frac{1}{2+2\sqrt{2}}(|x-1|+|y-1|)
\leq \frac{1}{1+\sqrt{2}}\max\{|x-1|,|y-1|\}.$$
Thus $|H(r^{k}_{i-1},R^{k}_{i})|  \leq \frac{\rho^k}{1+\sqrt{2}}$
$\forall \iinZ$, and % Combining this with the expression of $S_\epsilon^{(1)}$ (\ref{eq:S1expression}) we obtain (\ref{eq:majorat}):
%\begin{align*}
%\frac{|d^{k+1}_{2i+j}|}{|d^{k}_i|} = |1+(-1)^j H(r^{k}_{i-1},R^{k}_i)| &\leq 1+%|H(r^{k}_{i-1},R^{k}_i)| \leq 1+\frac{\rho^k}{1+\sqrt{2}}\\
%&\geq 1-|H(r^{k}_{i-1},R^{k}_i)| \geq 1-\frac{\rho^k}{1+\sqrt{2}}
%\end{align*}
\[ 1-\frac{\rho^k}{1+\sqrt{2}} \leq 1-|H(r^{k}_{i-1},R^{k}_i)| \leq 
\frac{|d^{k+1}_{2i+j}|}{|d^{k}_i|}
\leq 1+|H(r^{k}_{i-1},R^{k}_i)| \leq 1+\frac{\rho^k}{1+\sqrt{2}}.\]
\end{proof}

\begin{prop} \label{lem:majorat2}
Let $d^0$ be a strictly positive (negative) sequence and $d^k=
S^{(1)}_\epsilon d^{k-1}$, $k \geq 1$. Given $\eta\in(\frac34,1)$,
$\exists \de>0$ such that if $\rho^0 < \de$ then the sequence
$(d^{k})_{k=0}^\infty$ is uniformly  bounded.
\end{prop}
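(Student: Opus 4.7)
The plan is to combine the pointwise growth estimate of Lemma \ref{lem:majorat} with the geometric decay of $\rho^k$ established in Proposition \ref{prop:contractivitat}. Because Lemma \ref{lem:majorat} gives the level-to-level multiplicative estimate
\[
\norma{d^{k+1}} \leq \left(1+\frac{\rho^k}{1+\sqrt{2}}\right) \norma{d^k},
\]
iterating yields
\[
\norma{d^{k+1}} \leq \prod_{l=0}^{k}\left(1+\frac{\rho^l}{1+\sqrt{2}}\right) \norma{d^0}.
\]
So it suffices to show that this infinite product converges, which is equivalent to the summability of the sequence $(\rho^l)_{l\geq 0}$.

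Given $\eta\in(\frac34,1)$, I would pick $\de>0$ to be the value provided by Proposition \ref{prop:contractivitat}, so that whenever $\rho^0<\de$ one has $\rho^{k+1}<\eta^{k/2}\max\{\rho^0,\rho^1\}$ for every $k\geq 0$. Then
\[
\sum_{l=0}^{\infty}\rho^l \leq \rho^0 + \max\{\rho^0,\rho^1\} \sum_{k=0}^{\infty}\eta^{k/2} = \rho^0 + \frac{\max\{\rho^0,\rho^1\}}{1-\sqrt{\eta}} < \infty.
\]
(Here I am implicitly using that $\rho^1$ is finite: this follows from the lemma preceding Proposition \ref{prop:contractivitat}, since $\rho^0<\de<\infty$ guarantees $d^0\in\lpinf$ and hence $\rho^k<\infty$ for every $k$.)

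Once summability of $(\rho^l)$ is in hand, the elementary inequality $1+x \leq e^{x}$ for $x\geq 0$ gives
\[
\prod_{l=0}^{k}\left(1+\frac{\rho^l}{1+\sqrt{2}}\right) \leq \exp\!\left(\frac{1}{1+\sqrt{2}}\sum_{l=0}^{\infty}\rho^l\right) =: C < \infty,
\]
which is independent of $k$. Therefore $\norma{d^{k}} \leq C\norma{d^0}$ for all $k\geq 0$, proving uniform boundedness.

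I do not anticipate any essential obstacle: Lemma \ref{lem:majorat} encodes exactly the right one-step amplification, and Proposition \ref{prop:contractivitat} produces exactly the geometric rate needed to make the resulting telescoping product convergent. The only minor bookkeeping point is to shrink $\de$ (if necessary) so that both $\rho^0<\de$ and the hypothesis of Proposition \ref{prop:contractivitat} are simultaneously satisfied, and to note that the constant $C$ depends only on $\rho^0$, $\rho^1$ and $\eta$, not on $k$.
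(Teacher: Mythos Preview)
Your proposal is correct and follows essentially the same route as the paper: iterate the one-step estimate from Lemma~\ref{lem:majorat}, invoke Proposition~\ref{prop:contractivitat} to get $\rho^l=O(\eta^{l/2})$, and then bound the resulting product via $1+x\le e^{x}$ (equivalently, $\log(1+t)\le t$). The only cosmetic difference is that the paper writes the final bound directly as $\norma{d^0}\exp\!\bigl(c/(1-\sqrt{\eta})\bigr)$ without separately stating the summability of $(\rho^l)$, and your remark about possibly shrinking $\de$ is unnecessary since the $\de$ you chose is already exactly the one from Proposition~\ref{prop:contractivitat}.
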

\begin{proof}
By Lemma \ref{lem:majorat} 
\[\norma{d^{k+1}}\leq
(1+\frac{\rho^k}{1+\sqrt{2}} ) \norma{d^k}, \quad \rightarrow \quad 
\norma{d^k} \leq \norma{d^0} \prod_{l=0}^{k-1}
(1+\frac{\rho^l}{1+\sqrt{2}}).\] 
Applying Proposition \ref{prop:contractivitat}, there exists
$\de >0$ such that if $\rho^0 <\delta_\eta$, 
$$ \norma{d^k} \leq \norma{d^0} \prod_{l=0}^{k-1} (1+c \eta^{l/2} ),
\qquad  c = \max\{\rho^1,\rho^0 \} \frac{\eta^{-1/2}}{1+\sqrt{2}}. $$
%Now we prove that the product is bounded by a constant, independent
%of $k$, which implies that $(\norma{d^{k}})_{k=0}^\infty$ is
%bounded. 
Observe that
$$ \prod_{l=0}^{k-1} (1+c \eta^{l/2} ) = \exp(\log(\prod_{l=0}^{k-1} (1+c \eta^{l/2} ))) = \exp(\sum_{l=0}^{k-1} \log(1+c \eta^{l/2} )) \leq \exp(\sum_{l=0}^{k-1} c \eta^{l/2} ),$$
since $\log(1+t) \leq t$,  $\forall t\geq 0$. Then
$ \norma{d^k} \leq \norma{d^0}\exp(\frac{c}{1-\sqrt{\eta}} ),$
which proves the result.
\end{proof}

\begin{teo} \label{teo:smooth}
Let $d^0$ be a strictly positive (negative) sequence and $d^k=
S^{(1)}_\epsilon d^{k-1}$, $k \geq 1$. There exists $\bar \delta >0$ such
that if $\rho^0<\bar \delta$, then 
$S_\epsilon^{(1)}d^0$ converges to a $\cC^{\alpha-}$ function, with
$\alpha= 1-\frac12 log_2 3$.
\end{teo}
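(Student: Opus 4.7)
The plan is to convert the contraction rates already proved for $\rho^k$ into a regularity statement on the limit $F := (S_\epsilon^{(1)})^\infty d^0$ by the standard route of piecewise linear interpolants. The key is that Proposition \ref{prop:contractivitat} yields geometric decay $\rho^k = O(\eta^{k/2})$ for any $\eta \in (\tfrac34, 1)$, at the cost of a threshold $\delta_\eta$ on $\rho^0$; since $\eta$ may be taken arbitrarily close to $\tfrac34$, the resulting Hölder exponent approaches $-\tfrac12 \log_2(\tfrac34) = 1 - \tfrac12 \log_2 3 = \alpha$.

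First I would convert the decay of $\rho^k$ into a decay of $\|\nabla d^k\|_\infty$. Writing $\nabla d^k_i = d^k_i(R^k_i - 1)$, one has $\|\nabla d^k\|_\infty \leq \|d^k\|_\infty \rho^k$, and Proposition \ref{lem:majorat2} supplies a uniform bound on $\|d^k\|_\infty$. Together with Proposition \ref{prop:contractivitat}, this gives
$$\|\nabla d^k\|_\infty \leq C_\eta \, \eta^{k/2}, \qquad k \geq 0,$$
whenever $\rho^0 < \delta_\eta$. The text already established $\|\mathbb{P}^{k+1} - \mathbb{P}^k\|_\infty \leq \|\nabla d^{k+1}\|_\infty$ for the piecewise linear interpolants defined in \eqref{eq:Pkdef}, so the sequence $(\mathbb{P}^k)$ is geometric Cauchy in $C(\R)$ with continuous limit $F$, and the tail bound
$$\|F - \mathbb{P}^k\|_\infty \leq \sum_{j \geq k} C_\eta \eta^{(j+1)/2} \leq C'_\eta \, \eta^{k/2}$$
holds.

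Next I would run the classical three-piece Hölder argument. For $s, t \in \R$ with $h := |s - t|$ small, choose $k$ with $2^{-k-1} \leq h < 2^{-k}$ and split
$$|F(t) - F(s)| \leq |F(t) - \mathbb{P}^k(t)| + |\mathbb{P}^k(t) - \mathbb{P}^k(s)| + |\mathbb{P}^k(s) - F(s)|.$$
The outer two terms are bounded by the tail estimate above. For the middle term, $\mathbb{P}^k$ is piecewise linear on the grid $2^{-k}\Z$ with slope at most $2^k \|\nabla d^k\|_\infty \leq C_\eta 2^k \eta^{k/2}$, so it is at most $C_\eta 2^k \eta^{k/2} \cdot h \leq C_\eta \eta^{k/2}$. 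Using $\eta^{k/2} = 2^{k \cdot \frac12 \log_2 \eta} \leq (2h)^{-\frac12 \log_2 \eta}$, we obtain $|F(t) - F(s)| \leq C\, h^{\alpha_\eta}$ with $\alpha_\eta := -\tfrac12 \log_2 \eta$.

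The final step, and the one I would be most careful about, is to extract the $C^{\alpha-}$ conclusion with a single threshold $\bar\delta$ that does not depend on the target Hölder exponent. I would fix once and for all some $\eta^* \in (\tfrac34, 1)$ and set $\bar\delta := \delta_{\eta^*}$. If $\rho^0 < \bar\delta$, then $\rho^k \to 0$. Given any small $\theta > 0$, choose $\eta \in (\tfrac34, 1)$ with $\alpha_\eta > \alpha - \theta$, and select $K_\theta$ large enough that $\rho^{K_\theta}, \rho^{K_\theta + 1} < \delta_\eta$; from level $K_\theta$ onwards Proposition \ref{prop:contractivitat} activates with the improved rate, and the Hölder argument applied for $k \geq K_\theta$ gives $F \in C^{\alpha - \theta}$. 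Since $\theta > 0$ is arbitrary, $F \in C^{\alpha-}$, as claimed. The delicate point is precisely this bootstrap, which relies on the fact that the two-step contraction continues to operate with constants depending only on the tail of $(\rho^k)$, not on $\rho^0$ itself.
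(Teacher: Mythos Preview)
Your proof is correct and follows essentially the same route as the paper: both convert the decay $\rho^k=O(\eta^{k/2})$ (Proposition~\ref{prop:contractivitat}) together with the uniform bound on $\|d^k\|_\infty$ (Proposition~\ref{lem:majorat2}) into geometric decay of $\|\mathbb{P}^{k+1}-\mathbb{P}^k\|_\infty$, deduce $\cC^{-\frac12\log_2\eta}$ regularity, and then bootstrap to reach every $\eta\in(\tfrac34,1)$ from a single initial threshold. The only cosmetic differences are that you spell out the three-piece H\"older estimate explicitly (the paper cites Corollary~3.3 of \cite{Dyn92}) and that you take $\bar\delta=\delta_{\eta^*}$ for one fixed $\eta^*$, whereas the paper sets $\bar\delta=\sup_{3/4<\eta<1}\delta_\eta$; both choices work, with the paper's giving a potentially larger admissible threshold.
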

\begin{proof}
As we observed previously, the  piecewise linear functions
$ \mathbb{P}^{k}(t) $ in \eqref{eq:Pkdef} satisfy that 
\[\norma{ \mathbb{P}^{k+1} -  \mathbb{P}^{k}} \leq \norma{\nabla
  d^{k+1}}.\]
Since $ |\nabla d^{k}_j| = |d^{k}_{j+1}-d^{k}_{j}|
= |d^{k}_{j}|\left |\frac{d^{k}_{j+1}}{d^{k}_{j}}-1\right |, $
% \quad \rightarrow \quad  \norma{\nabla d^k} \leq \norma{d^k} \rho^k.$$
by  Lemma \ref{lem:majorat2} and  Proposition \ref{prop:contractivitat}, given $\eta\in(\frac34,1)$, $\exists
\de>0$ and $C>0$ such that if $\rho^0 < \de$ then  
\[\norma{ \mathbb{P}^{k+1} -  \mathbb{P}^{k}} \leq \norma{\nabla
  d^{k+1}}\leq \norma{d^{k+1}} \rho^{k+1} \leq C \eta^{\frac{k}{2}}.\]
Then, by slightly modifying the proof of Corollary 3.3 in
\cite{Dyn92} we get that $S^{(1)}_\epsilon d^0$ is 
$\cC^{-\log_2(\sqrt{\eta})}$ smooth. 

Let us define  $\bar \delta:= \sup_{\frac34 < \eta < 1}\de$. If
$\rho^0< \bar \delta$, then $\rho^0 <\delta_{\eta_0}$ for some $\eta_0 \in
(\frac34,1)$ and, by the properties of $\delta_{\eta_0}$, we also have that
$\rho^k \leq C \eta_0^{k/2}$ and, as a consequence,  $S^{(1)}d^0$ is
$\cC^{-\log_2(\sqrt{\eta_0})}$ smooth.
  Let us consider now 
any other $\eta \in (\frac34,1)$, and its 'associated' $\de$.
Let $k_0$ be s.t. $\eta_0^{k/2} < \de/C$, $\forall k \geq k_0$. Then $\rho^k
< \de$, $\forall k \geq k_0$ and, by the same arguments used
throughout this section, $\rho^k \leq C' \eta^{\frac{k}{2}}$,  $\forall
k \geq k_0$, $C'>0$. But this decay rate on the $\rho^k$ (for $k$ large
enough) also implies  that the smoothness of the limit function
is $\cC^{-\log_2(\sqrt{\eta})}$. Since $\eta$ is arbitrary
in $(\frac34,1)$, we  conclude that
the limit functions are at least $\cC^{\alpha-}$, with
$\alpha=-\frac12\log_2(\frac34)= 1-\frac12\log_2(3) \cong 0.2 $.
\end{proof}

According to the previous theorem,  the smoothness of $S_\epsilon$
for strictly monotone data is at least $C^{1+\alpha -}$, as long as
the initial data satisfies the  'technical' additional condition
$\rho(d^0) <\bar \delta$. Notice that
\[ \rho(d) < \bar \delta \quad \equiv \quad \max_i
\{\frac{|d_{i+1}-d_i|}{|d_i|}, \frac{|d_{i+1}-d_i|}{|d_{i+1}|} \} < \bar \delta, \]
hence, it is quite straightforward to see that the required technical condition
may, in fact,  be easily achieved for 
smooth (strictly increasing)  data. Indeed, if $f=F|_{2^{-k} \Z}$,
then $d_i=2^k \nabla f_i= F'(i h)+ O(h)$, with $h=2^{-k}$, and
$$\frac{|d_{i+1}-d_i|}{|d_i|} \approx 
%\frac{|F'(ih) + h F''(ih+\xi) - F'(ih)|}{|F'(ih)|} 
h \frac{|F''(ih+\xi)| + O(h)}{|F'(ih)| + O(h)}, \quad \xi\in[0,h].$$
Hence, for strictly monotone smooth initial data such that $|F'|>\theta>0$,
we get that $\rho(d)$ is  $O(h)$ and there should be no problem in
adjusting $h$ (the initial sampling) in order to fulfill the required
condition.

In addition, taking into account the proof of Lemma
\ref{lem:contractivitat}, it is possible to  give an estimate of the value
of $\bar \delta$ with the aid of Wolfram Mathematica by checking what
is the largest value of $\delta>0$ satisfying
\[  \| \nabla G^2_j ( \bar x ) \|_1 < 1, \quad 
\| \nabla \frac{1}{G^2_j} ( \bar x ) \|_1 < 1, \quad \forall \, \bar x :||\bar x -
\uno_4||_\infty <\delta .\]
According to our computations,  we  estimate that $\bar \delta \approx 0.18$.

\section{Stability and Approximation order \label{sec:approx}}

Stability and  approximation order  are also important
properties of a subdivision scheme. Both concepts are enclosed below
for completeness. 
\begin{defi}{\em (Lipschitz) Stability.} \label{defi:stability}
We say that a convergent subdivision scheme is \emph{stable} if
$$ \norma{\Sinf f - \Sinf g}\leq C\norma{f-g}, \qquad f,g\in\liZ.$$
\end{defi}
\begin{defi}{\em Approximation order.}
A convergent subdivision scheme has \emph{approximation order $r$} if
for any sufficiently smooth function, $F$, there exists $h_0$ such that
$$\norma{F - (\Sinf f^0)(\bullet/h)}\leq Ch^r, \qquad f^0=F|_{h\Z}, \qquad
\forall h\leq  h_0.$$
\end{defi}
The approximation order measures the {\em approximation capabilities}
of the subdivision process, that is, the ability to ensure that smooth
behavior is adequately represented.
Since the explicit expression of $\Sinf$ is usually unknown, the
approximation capabilities of a subdivision scheme are often analyzed
by considering instead the approximation order \emph{after one step}.
\begin{defi}{\em Approximation order after one step.}
A subdivision scheme has \emph{approximation order $r$ after one step}
if for any sufficiently smooth function, $F$, there exists $h_0$ such that
$$\norma{ F|_{\frac{h}{2}\Z} - S f } \leq Ch^r, \qquad f=F|_{h\Z}, \qquad \forall h\leq  h_0.$$
\end{defi}

It is well known  that the approximation order of a
subdivision scheme {\em   after one step} determine the order of
approximation of the scheme, provided the scheme is stable (Theorem 2.4.10 of \cite{Kuijt98}). The order of approximation and the stability of
nonlinear schemes are often studied together \cite{ADLT06,ADS16,DL-US17}.

%\begin{teo}[]\label{teo:stable:approx}
%If $S$ is stable and it has approximation order $r$ after one step, then it has% also approximation order $r$.
%\end{teo}

For linear stationary
subdivision schemes, Lipschitz stability is a consequence of
convergence, but this is not the case for  nonlinear subdivision
\cite{ADL11,CDM03,HO10}. Some theory was developed and successfully applied on
several instances \cite{ADL11,ADS16,CDM03,DL08,DL-US17,Kuijt98}.

We have already observed that the nonlinear schemes $S_\epsilon$ in
\eqref{eq:seps-def} are not stable for general data (see Figure
\ref{fig:rep_cercle_min}).  
However, as in section \ref{sec:smoothness}, we shall
be able to prove stability for a conveniently restricted 
class of strictly monotone data. For such data, the order of
approximation can be obtained by the usual, Taylor-like, one-step
approximation results \cite{DL-US17}.

\subsection{Approximation order}

\begin{teo} \label{teo:approx}
Let $F$ be a smooth function with $|F'|>\theta>0$, and let
$f=(F(ih))_{i\in\Z}$. Then, for any $\epsilon \in [0, \sqrt{2}]$ we have
\[|| F|_{\frac{h}{2}\Z} - S_\epsilon f||_\infty \leq C h^4 . \]
%The trigonometric subdivision scheme has approximation order 4 after
%one step if  
%, and it has approximation order after one step 2 if $F'(t_i^h)=0$ for some $i$. If $F''(t_i^h)=0$ whenever $F'(t_i^h)=0$, then it has order 4.
\end{teo}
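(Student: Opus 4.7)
Since $S_\epsilon$ is interpolatory, $(S_\epsilon f)_{2i} = f_i = F(ih)$ matches $F$ exactly at the even indices of the refined grid, so the error is entirely supported at odd indices; it suffices to bound $|F(t^*) - (S_\epsilon f)_{2i+1}|$ uniformly in $i$, where $t^* := (i+\tfrac12)h$ is the midpoint of the sub-interval being refined. The plan is a direct Taylor expansion of the insertion rule \eqref{eq:seps-def} about $t^*$, exploiting that the stencil indices $\{i-1, i, i+1, i+2\}$ yield nodes $t^* + jh$ with $j \in \{-\tfrac32, -\tfrac12, \tfrac12, \tfrac32\}$, which are symmetric about $t^*$.

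The first preparatory step is to verify that for $h$ small enough (depending on $\theta$ and the $C^2$-bound on $F$) the smooth branch of $\Gamma_\epsilon$ in \eqref{eq:GammaE-def} is selected uniformly in $i$. A first-order Taylor expansion gives $f_{i+1} - f_i = hF'(t^*) + O(h^3) \neq 0$ and $(f_{i+2} - f_{i-1})/(f_{i+1} - f_i) = 3 + O(h^2)$, so $1 + (f_{i+2}-f_{i-1})/(f_{i+1}-f_i) = 4 + O(h^2) \geq \epsilon^2$ for any $\epsilon \leq 2$ once $h$ is small enough. Consequently $\Gamma_\epsilon$ coincides with the smooth expression $\Gamma$ of \eqref{eq:Gamma-def}, \emph{regardless} of the precise value of $\epsilon \in [0,\sqrt{2}]$.

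The key observation is that the stencil's symmetry about $t^*$ makes each ingredient of the insertion rule an even function of $h$, so odd-order terms drop out automatically. Writing $g(s) := F(t^*+s)$, one checks:
\begin{align*}
\tfrac12(f_i + f_{i+1}) &= F(t^*) + \tfrac{h^2}{8} F''(t^*) + O(h^4),\\
f_{i+2} - f_{i+1} - f_i + f_{i-1} &= 2h^2 F''(t^*) + O(h^4),\\
\Gamma(f_{i-1},f_i,f_{i+1},f_{i+2}) &= \tfrac{1}{16} + O(h^2).
\end{align*}
The first two are immediate from $g(\pm h/2)$ and $g(\pm 3h/2)$ being manifestly even in $h$; for $\Gamma$, note that $(f_{i+2}-f_{i-1})/(f_{i+1}-f_i)$ is the ratio of two odd functions of $h$, hence even. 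Combining these expansions in \eqref{eq:seps-def}, the $O(h^2)$ contribution $\tfrac{h^2}{8}F''(t^*)$ from the average cancels exactly against $\tfrac{1}{16} \cdot 2h^2 F''(t^*)$ from the product $\Gamma \cdot B$; this cancellation is precisely the manifestation, at the local level, of the reproduction of $\Pi_2$ established in Proposition \ref{prop:repro_poli2} (and of the fact that $\Gamma \to 1/16$ recovers the Deslauriers--Dubuc scheme $T_{2,2}$). Since all expansions are even in $h$, there is no $O(h^3)$ term, and one concludes $(S_\epsilon f)_{2i+1} - F(t^*) = O(h^4)$.

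The main obstacle is bookkeeping rather than analysis: one must check that $\Gamma_\epsilon$ stays on its smooth branch \emph{uniformly} in $i$, and that the implicit constants in the Taylor remainders are controlled uniformly in $i$. Both follow from the boundedness of $\|F^{(j)}\|_\infty$ for $j \leq 4$ on an interval enclosing the stencil together with $|F'| \geq \theta > 0$, which keeps $f_{i+1} - f_i$ away from zero and $\Gamma$ away from its singularity. Taking the supremum over $i$ then yields $\|F|_{(h/2)\Z} - S_\epsilon f\|_\infty \leq C h^4$ with $C$ depending only on $\|F\|_{C^4}$ and $\theta$, but not on $h$.
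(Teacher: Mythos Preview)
Your proof is correct and follows essentially the same approach as the paper: a Taylor expansion of the insertion rule about the midpoint $t^*=(i+\tfrac12)h$. The paper's proof is extremely terse, simply recording the formal expansion
\[
(S_\epsilon f)_{2i+1}= F(\xi_i)+\frac{3 h^4}{128}\left(\frac{F^{(3)}(\xi_i) F''(\xi_i)}{F'(\xi_i)}-F^{(4)}(\xi_i)\right)+O(h^5),
\]
whereas you actually supply the missing justification: the verification that the smooth branch of $\Gamma_\epsilon$ is active for small $h$ (uniformly in $i$, thanks to $|F'|>\theta$), the parity argument showing all odd-order terms in $h$ vanish, and the explicit cancellation of the $h^2$ contributions. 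Your argument is thus a fleshed-out version of the paper's, not a different route.
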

\begin{proof}
Clearly $f^0=(F(ih))_{i\in\Z}$ is a strictly monotone sequence. Since
$(S_\epsilon f^0)_{2i}=f^0_i=F(ih)=F(2 i h/2)$, we only need to measure the
distance between $(S_\epsilon f)_{2i+1}$ and $F(\xi_i)$, $\xi_i = (2i+1)h/2 = (i+1/2)h$. By taking a formal
Taylor series expansion we find 
%of $(Sf)_{2i+1}$ at $(2i+1)h$. If $F'((2i+1)h)\neq 0$, then 
$$(S_\epsilon f)_{2i+1}= F(\xi_i)+\frac{3 h^4}{128} \left(\frac{F^{(3)}(\xi_i) F''(\xi_i)}{F'(\xi_i)}-F^{(4)}(\xi_i)\right)+O\left(h^5\right).$$
%If $F'(t_{2i+1}^h)=0$, then
%\begin{align*}
%(Sf)_{2i+1}&= F(t_{2i+1}^h)+\frac{1}{168} \left(14+\sqrt{7}\right) h^2 F''(t_{2i+1}^h)\\
%&+h^4 \frac{\left(10 \sqrt{7}-49\right)}{282240 } \frac{ \left(35 F^{(3)}(t_{2i+1}^h) F^{(4)}(t_{2i+1}^h)-27 F^{(5)}(t_{2i+1}^h) F''(t_{2i+1}^h)\right)}{F^{(3)}(t_{2i+1}^h)}+O\left(h^5\right).
%\end{align*}
%If $F'(t_{2i+1}^h)=F''(t_{2i+1}^h)=0$, then
%$$(Sf)_{2i+1} = F(t_{2i+1}^h)+\frac{\left(10 \sqrt{7}-49\right) F^{(4)}(t_{2i+1}^h) h^4}{8064}+O\left(h^5\right).$$
%By the Theorem \ref{teo:stable:approx}, we know that whenever the trigonometric subdivision scheme is stable, it has approximation order 4, provided that if $F'>\theta>0$. %, and it has approximation order 2 if $F'(t)=0$ for some $t$. If $F''(t)=0$ whenever $F'(t)=0$, then it has order 4.
\end{proof}

\subsection{Stability} \label{sec:stable}

In \cite{ADL11,ADS16,DL-US17},  stability is proved using a result
similar to Theorem  \ref{teo:conv1} (see for instance Theorem 1 of
\cite{DL-US17}), which requires that the scheme  is of the form
\eqref{eq:nonlinearSS}. % and admits a first difference scheme.
\begin{teo} \label{teo:stable}
 Let $S$ be of the form (\ref{eq:nonlinearSS}).
%, with $S_{\cL}$ a convergent linear subdivision scheme.
$S$ is stable provided that
\begin{align*}
\text{\textbf{S1.}} & \qquad \exists M > 0 :&  ||\cF(f)-\cF(g) ||_{\infty} \leq M ||f-g||_{\infty} &&   \forall f,g \in  l^{\infty}(\mathbb{Z})\\[3pt]
\text{\textbf{S2.}} & \qquad \exists  L > 0, \, \,  0<\eta < 1 :&
\norma{ (S^{[1]})^L f - (S^{[1]})^L g } \leq \eta  \norma{f-g }
&& \forall f,g \in
l^{\infty}(\mathbb{Z})
\end{align*}
\end{teo}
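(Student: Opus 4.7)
The plan is to control the distance $\norma{S^n f - S^n g}$ uniformly in $n$ and then pass to the limit via piecewise linear interpolants.

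First, I would establish geometric decay of the difference sequences $\nabla(f^k-g^k)$. Applying $\nabla$ to $Sf=Tf+\cF(\nabla f)$ gives $\nabla S^k f = (S^{[1]})^k \nabla f$, so the relevant object is the iterate of $S^{[1]}=T^{[1]}+\nabla \cF$. Condition \textbf{S1} makes $\cF$ Lipschitz, so $\nabla \cF$ is Lipschitz with constant at most $2M$, and hence $S^{[1]}$ is globally Lipschitz on $\liZ$ with some constant $L_S$. Condition \textbf{S2} says every $L$-th iterate contracts by the factor $\eta<1$. Writing $k=jL+r$ with $0\leq r<L$ and iterating \textbf{S2} $j$ times starting from $(S^{[1]})^r f$ and $(S^{[1]})^r g$, one obtains, with $\theta:=\eta^{1/L}\in(0,1)$, a constant $C_1$ (depending on $L_S$, $L$, $\eta$) such that
\[ \norma{\nabla f^k - \nabla g^k} = \norma{(S^{[1]})^k \nabla f - (S^{[1]})^k \nabla g} \leq C_1 \theta^k \norma{f-g}, \qquad \forall k\geq 0. \]

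Next, I would unroll the recursion $f^{k+1}=T f^k + \cF(\nabla f^k)$ to obtain the Duhamel-like representation
\[ f^n - g^n = T^n(f-g) + \sum_{k=0}^{n-1} T^{n-1-k}\bigl[\cF(\nabla f^k) - \cF(\nabla g^k)\bigr]. \]
Because $T$ is a convergent linear subdivision scheme, for each fixed $h\in\liZ$ the sequence $(T^n h)_n$ is bounded in $\liZ$, so the Banach--Steinhaus theorem yields $C_T:=\sup_n \|T^n\|<\infty$. Combining this with \textbf{S1} and the geometric decay from the previous step gives
\[ \norma{f^n - g^n} \leq C_T \norma{f-g} + C_T M C_1 \sum_{k=0}^{n-1} \theta^k \norma{f-g} \leq \left( C_T + \frac{C_T M C_1}{1-\theta} \right) \norma{f-g}, \]
a bound uniform in $n$. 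Finally, since the piecewise linear interpolants $\mathbb{P}^n_f$ of $f^n$ on $2^{-n}\Z$ satisfy $\norma{\mathbb{P}^n_f - \mathbb{P}^n_g}=\norma{f^n-g^n}$ and converge uniformly to $\Sinf f$ and $\Sinf g$ respectively (by convergence of $S$), the triangle inequality yields $\norma{\Sinf f - \Sinf g} \leq \liminf_n \norma{f^n - g^n}\leq C\norma{f-g}$.

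The main technical obstacle is the first step: extracting contractivity from \textbf{S2} across $L$ consecutive applications while absorbing the intermediate Lipschitz constants supplied by \textbf{S1}, since $S^{[1]}$ itself need not be contractive at any single step. Once the geometric decay of the differences is in place, the Duhamel representation together with the Banach--Steinhaus bound on $\|T^n\|$ reduces the rest to summing a geometric series.
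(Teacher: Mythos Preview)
The paper does not actually prove Theorem~\ref{teo:stable}; it is quoted from the references \cite{ADL11,ADS16,DL-US17} (see the sentence immediately preceding the theorem). So there is no ``paper's own proof'' to compare against.

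Your argument is correct and follows the standard route used in those references. A few remarks:
\begin{itemize}
\item The identity $\nabla S^k f=(S^{[1]})^k\nabla f$ is exactly what the paper uses (see \eqref{eq:nonlinearS1}), and your Lipschitz bound for $S^{[1]}$ follows from \textbf{S1} together with the boundedness of the linear operator $T^{[1]}$.
\item The decomposition $k=jL+r$ and the resulting bound $\norma{(S^{[1]})^k\nabla f-(S^{[1]})^k\nabla g}\le C_1\theta^k\norma{f-g}$ is precisely how the contraction in \textbf{S2} is customarily propagated; this is the analogue, for differences of two trajectories, of condition \textbf{C2} in Theorem~\ref{teo:conv1}.
\item The Duhamel/variation-of-constants identity and the appeal to Banach--Steinhaus for $C_T=\sup_n\|T^n\|<\infty$ are valid; for a convergent, local, linear scheme the uniform bound on $\|T^n\|$ is standard (it also follows directly from the compactly supported basic limit function).
\item In the last step you invoke convergence of $S$. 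This is legitimate: stability in Definition~\ref{defi:stability} is formulated only for convergent schemes, and in any case \textbf{S1}--\textbf{S2} specialize (taking $g=0$, using $\cF(0)=0$) to \textbf{C1}--\textbf{C2} of Theorem~\ref{teo:conv1}, which yields convergence.
\end{itemize}
In short, your proof is sound and matches the approach of the cited literature; the paper itself simply quotes the result.
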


Notice that \textbf{S1} and \textbf{S2} are Lipschitz-type conditions
on  $\cF$ and $(S^{[1]})^L$. We have already observed that $S_\epsilon$ is not stable (see Figure
\ref{fig:rep_cercle_min}). 
 The reason behind the lack of stability can be traced back to the fact that
 $\Gamma_\epsilon$ is not a continuous function, so that \textbf{S1} and
 \textbf{S2} cannot be fulfilled, in general.

In section \ref{sec:smoothness} we have seen  that if the initial
data is a strictly monotone sequence and $\epsilon\in [0,\sqrt{2}]$, 
the subdivision rules of $S^{(1)}_\epsilon$ are smooth, positive,
functions. In this case, the two conditions in 
Theorem \ref{teo:stable} could be fulfilled, hence, for the rest of
the section we shall (silently) assume that $\epsilon\in [0,\sqrt{2}]$,
and restrict our attention to strictly monotone data. 

% are Lipschitz, but $(S^{[1]})^L$ with Lipschitz constant less than
% one. 
In \cite{ADS16,DL-US17,HO10}, the authors use the theory of {\em
  Generalized Jacobians} to prove  condition \textbf{S2} for nonlinear schemes
defined by piecewise smooth subdivision rules. The main argument used
in these references derives from the following inequality 
\begin{equation} \label{eq:stable-L}
\norma{ (S^{[1]})^L f - (S^{[1]})^L g } \leq \norma{f-g} \sup_{t\in[0,1]} \|DS^{[1]}(\tau^{L-1}(t)) \cdots DS^{[1]}(\tau^{1}(t)) DS^{[1]}(\tau^{0}(t))\|_\infty,
\end{equation}
where $DS^{[1]}$ is the (generalized) Jacobian of $S^{[1]}$ (see
\cite{HO10} or the appendix in \cite{DL-US17} for details) and 
\begin{equation} \label{eq:tau}
\tau^0(t) = (1-t)f + t g, \qquad \tau^k(t) = S^{[1]}(\tau^{k-1}(t)).
\end{equation}
In our case, the (smooth)  subdivision rules of $S^{[1]}_\epsilon=
\frac12 S^{(1)}_\epsilon$ are $\frac12 \Psi_0, \frac12 \Psi_1$, with
$\Psi_j$ defined in \eqref{eq:S1expression} and $DS_\epsilon^{[1]}$ is
the bi-infinite matrix with the following non-zero entries 
(we use Matlab notation, as in \cite{ADS16,DL-US17}))
\begin{equation} \label{eq:generalized_jacobian}
(DS_\epsilon^{[1]})_{[2i+j,i-1:i+2]} = \frac12 \nabla
\Psi_j(f_{i-1},f_{i},f_{i+1}), \qquad j=0,1 .
\end{equation}
 To check \eqref{eq:stable-L}, we need the following preliminary results.

 \begin{lem} \label{lema:U_close}
Given $\delta>0$, and $f,g \in \liZ$ such that $ f_i, g_i>0(<0)$
$\forall i\in\Z$, we have that
\begin{equation} \label{eq:tau0}
\rho(f),\rho(g)<\delta \quad \rightarrow \quad 
\rho((1-t) f + t g)<\delta \quad \forall t\in[0,1],
\end{equation}
where $\rho$ defined in Remark \ref{rmk:linf+}. 
\end{lem}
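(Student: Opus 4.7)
The plan is to show that both ratios $R^h_i := h_{i+1}/h_i$ and $r^h_i := h_i/h_{i+1}$ associated with the convex combination $h := (1-t)f + tg$ lie in the interval $(1-\delta, 1+\delta)$, by recognizing each of them as a \emph{convex combination} of the corresponding ratios of $f$ and of $g$.

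First, I would observe that under the sign hypothesis on $f$ and $g$, the combination $h_i = (1-t)f_i + tg_i$ preserves strict positivity (resp.\ negativity), so $h_i \neq 0$ for every $i$ and $\rho(h)$ is well-defined. Next, a direct manipulation gives
\begin{equation*}
 R^h_i = \frac{(1-t) f_{i+1} + t g_{i+1}}{(1-t) f_i + t g_i} = \lambda_i R^f_i + (1-\lambda_i) R^g_i, \qquad \lambda_i := \frac{(1-t) f_i}{(1-t) f_i + t g_i},
\end{equation*}
where $R^f_i := f_{i+1}/f_i$, $R^g_i := g_{i+1}/g_i$. Because $f_i$ and $g_i$ have the same sign, $\lambda_i \in [0,1]$. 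The hypothesis $\rho(f), \rho(g) < \delta$ means $R^f_i, R^g_i \in (1-\delta, 1+\delta)$ for every $i$, hence the convex combination $R^h_i$ lies in the same open interval, i.e.\ $|R^h_i - 1| < \delta$.

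An entirely analogous computation writes $r^h_i$ as a convex combination of $r^f_i$ and $r^g_i$ with weights proportional to $(1-t)f_{i+1}$ and $tg_{i+1}$, yielding $|r^h_i - 1| < \delta$ by the same argument. Taking the sup over $i \in \Z$ gives $\rho((1-t)f + tg) \leq \max\{\rho(f), \rho(g)\} < \delta$, uniformly in $t \in [0,1]$.

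There is really no serious obstacle here: the proof is just the elementary observation that a \emph{ratio} of two convex combinations with positive coefficients is itself a convex combination of the corresponding ratios. The only minor point requiring care is verifying that the weights $\lambda_i$ are nonnegative, which is exactly where the same-sign assumption on $f_i, g_i$ is used; this is also what guarantees that the convex combination $h$ stays away from zero, so that $\rho(h)$ makes sense in the first place.
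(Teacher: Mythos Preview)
Your argument is correct. The key identity
\[
R^h_i = \lambda_i R^f_i + (1-\lambda_i) R^g_i, \qquad \lambda_i = \frac{(1-t)f_i}{(1-t)f_i + tg_i} \in [0,1],
\]
is exactly right, and the same-sign hypothesis is used precisely where you say.

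The paper's proof is equivalent in spirit but phrased differently: instead of writing the ratio $R^h_i$ as a convex combination of ratios, it first rewrites the condition $\rho(u)<\delta$ as $|u_{i+1}-u_i| < \delta\,|u_j|$ for $j=i,i+1$, and then applies the triangle inequality to $|h_{i+1}-h_i| \le (1-t)|f_{i+1}-f_i| + t|g_{i+1}-g_i|$, together with $(1-t)|f_j|+t|g_j| = |(1-t)f_j+tg_j|$ (same sign). Your ``convex combination of ratios'' formulation is arguably cleaner and more conceptual, and it yields the slightly sharper intermediate statement $\rho(h)\le \max\{\rho(f),\rho(g)\}$ directly; the paper's difference-based version is the same computation unwound. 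Either way the proof is a two-line affair.
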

\begin{proof}
Notice first that for any $u \in \liZ$ %and any $\alpha>0$, 
$$ \rho(u)<\delta \quad \equiv \quad |u_{i+1}-u_i| \leq \delta |u_j|,
\qquad \forall \iinZ , \, j=i,i+1.$$

To prove the result, we shall check that  $ \forall \iinZ$   and $t\in[0,1]$,
$$ |((1-t) f_{i+1} + t g_{i+1})-((1-t) f_i +t g_i)| <  \delta |(1-t)
f_j+ t g_j|,\qquad j=i,i+1.$$
The cases $t=0$ and $t=1$ are trivial. 
 For, $t\in (0,1)$,  since $(1-t) >0$ we can write 
\begin{align*}
|((1-t) f_{i+1} + t g_{i+1})-((1-t) f_i +t g_i)| &\leq (1-t)|
f_{i+1}-f_i| +  t |g_{i+1}-g_i|   \\
&\leq \delta  \left ( (1-t)|f_j| + t |g_j| \right )=
\delta |(1-t) f_j + t g_j |, \quad  j=i,i+1 , 
\end{align*}
where we have used that $| f_l +  g_l| = | f_l| + |
 g_l|$ $\forall l \in \Z$, since all components have the same sign.
\end{proof}

\begin{prop} \label{prop:gradiente1}
Let us consider $\eta\in(\frac58,1)$ and $\delta\in(0,\bar \delta)$,
where $\bar \delta$ is defined in Theorem \ref{teo:smooth}. Then
$\exists \, K_\delta$ such that $\forall f^0,g^0$ satisfying $ f_i^0,
g_i^0>0(<0)$ and  $\rho(f^0),\rho(g^0)< \delta$
$$ \sup_{t\in[0,1]} \|D S_\epsilon^{[1]} (\tau^k(t)) \|_\infty \leq
\eta, \qquad \forall k\geq K_\delta,$$
with $\tau^k$ in \eqref{eq:tau}.
\end{prop}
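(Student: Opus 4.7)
The plan is to bound $\|DS_\epsilon^{[1]}(\tau^k(t))\|_\infty$ by $\eta$ as soon as $\rho(\tau^k(t))$ is small enough, and then use the contraction estimates of Section~\ref{sec:smoothness} uniformly in $t$ to guarantee this smallness for all sufficiently large $k$. Throughout, I will exploit that for strictly monotone data with $\epsilon \in [0,\sqrt{2}]$, Remark~\ref{rmk:pos_monotone} and \eqref{eq:S1expression} ensure that $\Psi_j$ reduces to its smooth, positively $1$-homogeneous branch $\Psi_j(x,y,z) = y\bigl(1 + (-1)^j H(x/y,z/y)\bigr)$, so that $\nabla\Psi_j$ is $0$-homogeneous and depends only on the ratios $(r,R) = (d_{i-1}/d_i, d_{i+1}/d_i)$.

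First I would compute $\|DS_\epsilon^{[1]}\|_\infty$ at the constant configuration. Writing $H(r,R) = (r-R)/(s(s+2))$ with $s = \sqrt{2+r+R}$, a short calculation yields $H_r(1,1) = 1/8$ and $H_R(1,1) = -1/8$, so that $\nabla\Psi_j(1,1,1) = \bigl((-1)^j/8,\, 1,\, -(-1)^j/8\bigr)$ and $\|\nabla\Psi_j(1,1,1)\|_1 = 5/4$. In view of \eqref{eq:generalized_jacobian}, this gives $\|DS_\epsilon^{[1]}(d)\|_\infty = 5/8$ whenever $d$ is a constant strictly positive (or negative) sequence. By the continuity and $0$-homogeneity of $\nabla\Psi_j$ on strictly signed data, for any $\eta \in (5/8,1)$ I can then pick $\delta' = \delta'(\eta) > 0$ such that
\begin{equation*}
\rho(d) < \delta' \ \Longrightarrow\ \|DS_\epsilon^{[1]}(d)\|_\infty \leq \eta.
\end{equation*}

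The second step is to drive $\rho(\tau^k(t))$ below $\delta'$ uniformly in $t \in [0,1]$. Lemma~\ref{lema:U_close} guarantees $\rho(\tau^0(t)) \leq \max(\rho(f^0),\rho(g^0)) < \delta < \bar\delta$, and convexity together with the monotonicity preservation of Theorem~\ref{teo:mono} keeps every $\tau^k(t)$ strictly signed. Since by Theorem~\ref{teo:smooth} we have $\bar\delta = \sup_{3/4 < \eta_c < 1}\delta_{\eta_c}$, I fix $\eta_c \in (3/4,1)$ with $\delta_{\eta_c} > \delta$ and apply Proposition~\ref{prop:contractivitat} to $d^0 = \tau^0(t)$ for each $t$. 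This yields $\rho(\tau^k(t)) \leq \eta_c^{(k-1)/2}\max(\rho(\tau^0(t)),\rho(\tau^1(t)))$. The Mean-Value-Theorem estimates already used to derive \eqref{eq:fitaR2i} (and its counterpart for $R_{2i+1}$) deliver $\rho(\tau^1(t)) \leq C\rho(\tau^0(t))$ with $C$ independent of $t$, so $\rho(\tau^k(t)) \leq C'\delta\,\eta_c^{(k-1)/2}$ uniformly in $t$; choosing $K_\delta$ so that the right-hand side drops below $\delta'$ for every $k \geq K_\delta$ completes the argument.

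The hard part will be the explicit gradient computation at the constant state, because the value $5/8$ it produces is exactly the threshold appearing in the hypothesis $\eta > 5/8$, and any miscalculation would either over-constrain or invalidate the admissible range. Once that bound is in hand, the uniformity in $t$ is a largely mechanical consequence of Lemma~\ref{lema:U_close} and the one-step Lipschitz-type estimates developed in Section~\ref{sec:smoothness}; the only mild subtlety is that, for the initial step, $\rho(\tau^1(t))$ may exceed $\rho(\tau^0(t))$ (since the analogue of $\|\nabla G_1(1,1)\|_1 = 1/2$ for $G_2$ is exactly $1$), so one must absorb this into the constant $C$ and rely on the two-step contraction of Proposition~\ref{prop:contractivitat} rather than a one-step contraction.
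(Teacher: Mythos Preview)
Your proposal is correct and follows essentially the same route as the paper's own proof: compute $\tfrac12\|\nabla\Psi_j(\uno_3)\|_1=5/8$ (the paper defers this to the Appendix), use continuity plus $0$-homogeneity to get a $\lambda_\eta$ (the paper's notation for your $\delta'$) such that $\rho(d)<\lambda_\eta$ forces $\|DS_\epsilon^{[1]}(d)\|_\infty\le\eta$, then invoke Lemma~\ref{lema:U_close} and Proposition~\ref{prop:contractivitat} with some $\eta'\in(3/4,1)$ satisfying $\delta<\delta_{\eta'}$ to drive $\rho(\tau^k(t))$ below $\lambda_\eta$ uniformly in $t$. The paper handles the one-step growth $\rho(\tau^1(t))\le c\,\rho(\tau^0(t))$ exactly as you do, by the Mean-Value-Theorem estimates behind \eqref{eq:fitaR2i}.
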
 
\begin{proof}
%We will use some computations of the Appendix.
From \eqref{eq:generalized_jacobian}, we have 
% Observe that for any 
\begin{equation} \label{eq:normDS}
\|D S_\epsilon^{[1]} (\tau^k(t)) \|_\infty = \sup_{i\in\Z, \, j=0,1}
\| \frac12 \nabla\Psi_j(\tau^k_{i-1}(t),\tau^k_{i}(t),\tau^k_{i+1}(t))
\|_1, \qquad \forall t\in[0,1]
\end{equation}
where 
$\tau^k_l(t)=(\tau^k(t))_l$, $l \in \Z$. 
Since  %$\Psi_j$ is 1-homogeneous,  
$\nabla \Psi_j$ is 0-homogeneous and $\tau^k_i(t)\neq 0$, $\forall t
\in [0,1]$, we can write
\begin{equation} \label{eq:prop:gradiente1:1}
 \nabla \Psi_j (\tau^k_{i-1}(t),\tau^k_{i}(t),\tau^k_{i+1}(t)) = \nabla \Psi_j \left (\frac{\tau^k_{i-1}(t)}{\tau^k_{i}(t)},1,\frac{\tau^k_{i+1}(t)}{\tau^k_{i}(t)}\right ).
 \end{equation}
%since (it has the same sign as $f^0_i, g^0_i$).
It is easy to check (see Appendix) that $\frac12 \|\nabla \Psi_0 (1,1,1)
\|_1 = \frac12\|\nabla \Psi_1 (1,1,1) \|_1 = \frac58$. Hence, given
$\eta \in(\frac58,1)$ (and  using 
the Mean Value Theorem as before)   $\exists \lambda_\eta>0$ such that
\begin{equation} \label{eq:prop:gradiente1:2}
%\left\| \left     (\frac{\tau^k_{i-1}(t)}{\tau^k_{i}(t)},1,
%\frac{\tau^k_{i+1}(t)}{\tau^k_{i}(t)}\right ) - (1,1,1)\right\|_\infty 
\rho(\tau^k(t)) 
<\lambda_\eta \quad \Rightarrow \quad
\left \|  \frac12 \nabla  \Psi_j \left
    (\frac{\tau^k_{i-1}(t)}{\tau^k_{i}(t)},1,\frac{\tau^k_{i+1}(t)}{\tau^k_{i}(t)}\right
  ) \right \|_1 \leq \eta,  \, \, \,   j=0,1 \quad   \Rightarrow \quad
\|D S_\epsilon^{[1]} (\tau^k(t))\|_\infty \leq \eta .
\end{equation}

 Consider $\delta< \bar \delta$ and  recall %, from Theorem
                                %\ref{teo:smooth} 
that  $\bar \delta =
\sup_{\frac34<\eta'<1} \delta_{\eta'}$,  where $\delta_{\eta'}$ is
given in the Proposition \ref{prop:contractivitat}.

   If    $\rho(f^0),\rho(g^0)<\delta$, by
   Lemma \ref{lema:U_close}, $\rho(\tau^0(t))<\delta$   $\forall
   t\in[0,1]$.  
Then, since $\delta < \bar \delta$, % by the definition of $\bar \delta$, 
   there exists $\eta'\in(\frac34,1)$ such that
 $$\rho(\tau^{k+1}(t))<(\eta')^{k/2}
 \max\{\rho(\tau^1(t)),\rho(\tau^0(t))\}  \leq C \delta (\eta')^{k/2},\quad  \forall
 t \in [0,1].$$
since, by using the arguments in   Lemma 17, we can easily get that $\rho(\tau^1(t))\leq c \rho(\tau^0(t))$ $\forall t
\in [0,1]$, with $c$ independent of $t \in [0,1]$. Thus, there exists
$K=K_\delta$ such that $\rho(\tau^k(t)) < 
\lambda_\eta$ for all $k\geq K_\delta$ and $\forall t \in
[0,1]$. Hence  the result follows  from \eqref{eq:prop:gradiente1:2}. 
\end{proof}

 Using these results, a partial stability result can be
  stated: When applied to strictly monotone data,  $S_\epsilon$ is
  {\em stable} with respect to strictly monotone perturbations, as
  long as the initial data $f^0$ and the perturbation $g^0$ satisfy
  a  {\em technical} condition on the sizes of $\rho(\nabla f^0)$,
  $\rho(\nabla g^0)$. 

From \eqref{eq:Sepsform10}-\eqref{eq:Sepsform10F}, and the results in
section \ref{sec:mono},  we know that $\Gamma_\epsilon^{[1]}$ is
Lipschitz for this kind of data. On the other hand,  assuming that
$\rho(\nabla f^0),\rho(\nabla g^0) < \delta <\bar \delta$ and using
Proposition  \ref{prop:gradiente1} we can  find 
$L_\delta>0$ such that
$$ \sup_{t \in [0,1]} \|DS_\epsilon^{[1]} (\tau^{L_\delta-1}(t))\|_\infty 
\|DS^{[1]}_\epsilon(\tau^{L_\delta-2}(t)) \|_\infty \cdots 
\|DS^{[1]}_\epsilon (\tau^{0}(t))\|_\infty < 1,$$ 
where $ \tau^0(t)=(1-t)\nabla f^0+t \nabla g^0, \,
\tau^k(t)=S^{[1]}_\epsilon \tau^{k-1}(t),\,  k \geq 1$. 
This is sufficient to ensure stability for this (restricted) class of
initial data (see \cite{ADS16}).

\section{Numerical experiments} \label{sec:numeric}
In the present section we present  several numerical experiments that
illustrate  the theoretical results obtained in this paper.
% We analyze trigonometric functions reproduction, monotonicity preservation, smoothness and approximation capability.
Throughout this section, we shall always consider $S_\epsilon$ with
$\epsilon=1$, which belongs to the range of values for which we can
ensure that  the
scheme is convergent,  reproduces  trigonometric functions (with
$|\gamma|\leq \frac{2\pi}{3}$), hyperbolic functions and second order
polynomials. In addition, in strictly monotone regions, it is $\cC^1$,
stable (under strictly monotone perturbations) and, hence, it has approximation
order 4.
% For these reasons, we refer to this scheme as the
%\emph{trigonometric subdivision scheme}. 
% but the results are valid for any $0<\epsilon\leq 1$.

\subsection{Reproduction properties} 
%of $\Pi_2$ and Trigonometric Functions}
As stated in the introduction, the exact reproduction of specific
families of functions is a valuable asset for a subdivision
process. By applying a convergent interpolatory subdivision scheme to each one of the  coordinates of  an initial data set $f^0=(x^0_i,y^0_i)_{i\in\Z}$, one readily obtains a continuous curve $(x(t),y(t))=(\Sinf x^0,\Sinf y^0)=\Sinf f^0$ that interpolates the initial data set. 
Here, we will check the exact reproduction property of our
scheme when applied to different conic sections.
% (whose coordinate components belong to $W_{0,\gamma}$, for some
% unknown value of $\gamma$). 

 In Figure \ref{fig:combination}-left we consider an anthropomorphic shape
 formed by an ellipse, two hyperbolas and a parabola. We take 7 points
 on each one of the conic sections, i.e. 28 
 points in total, that are repeated periodically to form
 $f^0$. $S^\infty_\epsilon f^0$ is shown in the center plot of
 Figure \ref{fig:combination} (after 7 applications of the subdivision
 process). The plot shows that  each conic section is correctly
 reproduced. In Table \ref{tab:repro} we show the errors between
 $S^\infty_\epsilon f^0$ and the value of the conic section at each one of the
 points  marked with an $*$ in  the left plot.
 The table shows that the error is of the order of  machine precision
 in each case, confirming the exact reproduction properties of the
 scheme.  We remark here that the scheme is able to {\em exactly reproduce}
 each one of the conic sections without any knowledge of the type of
 conic to which it is being applied. 
 \begin{figure}[ht]
\centering
\includegraphics[height=0.2\textheight,  
clip]{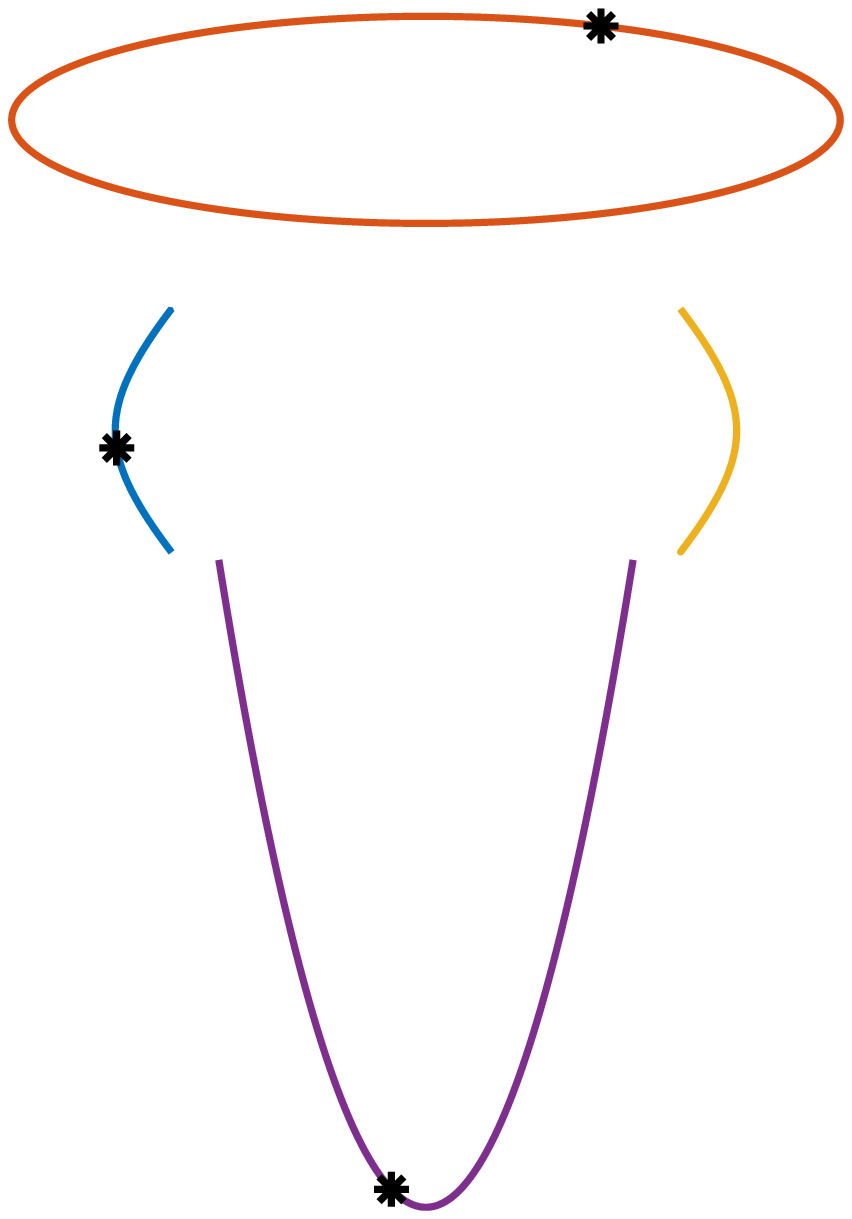} \ \ 
\includegraphics[height=0.2\textheight, clip, trim={0 0 6.5cm
  0}]{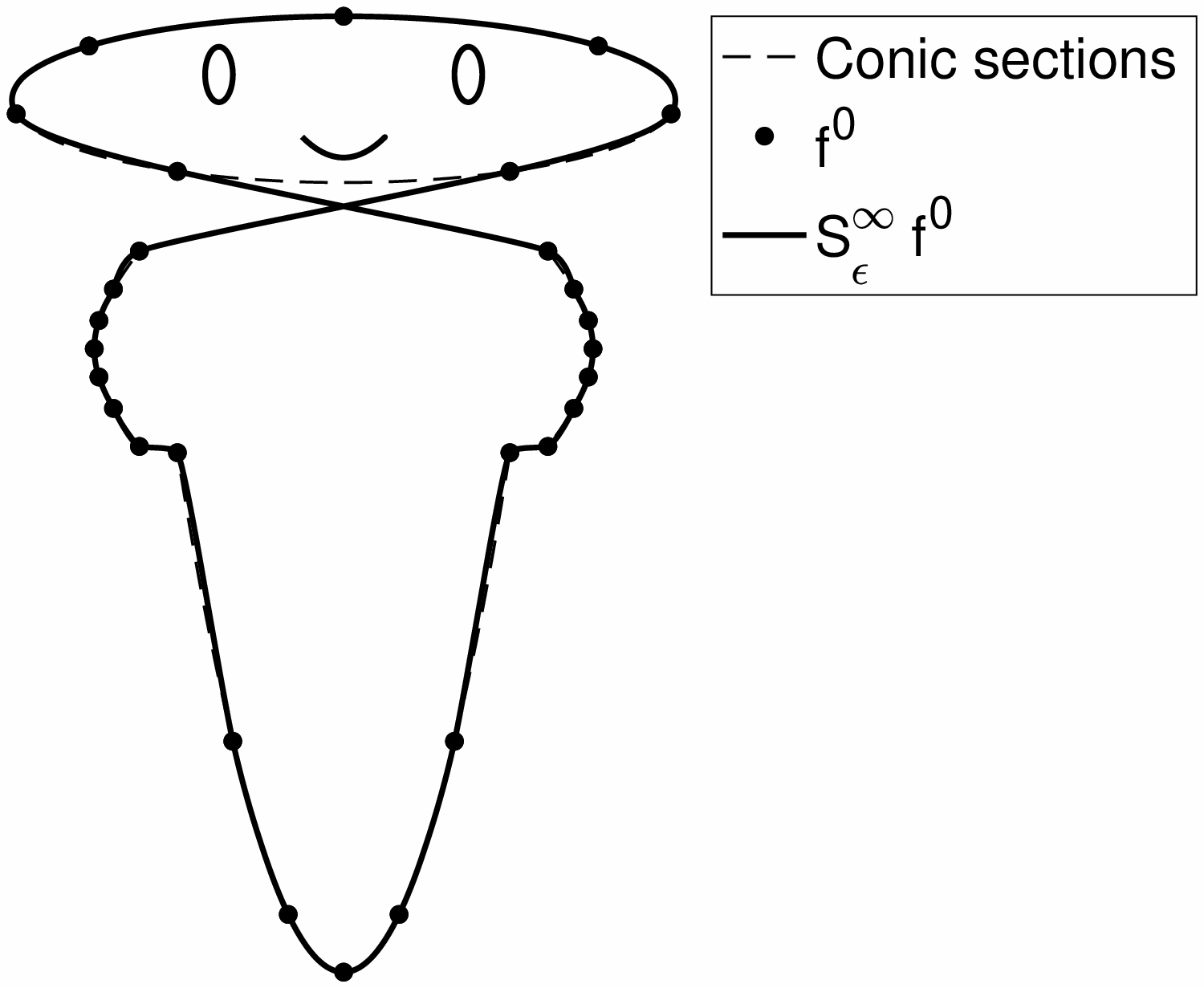} \ \ 
\includegraphics[height=0.2\textheight, clip, trim={0 0 6.5cm 0}]{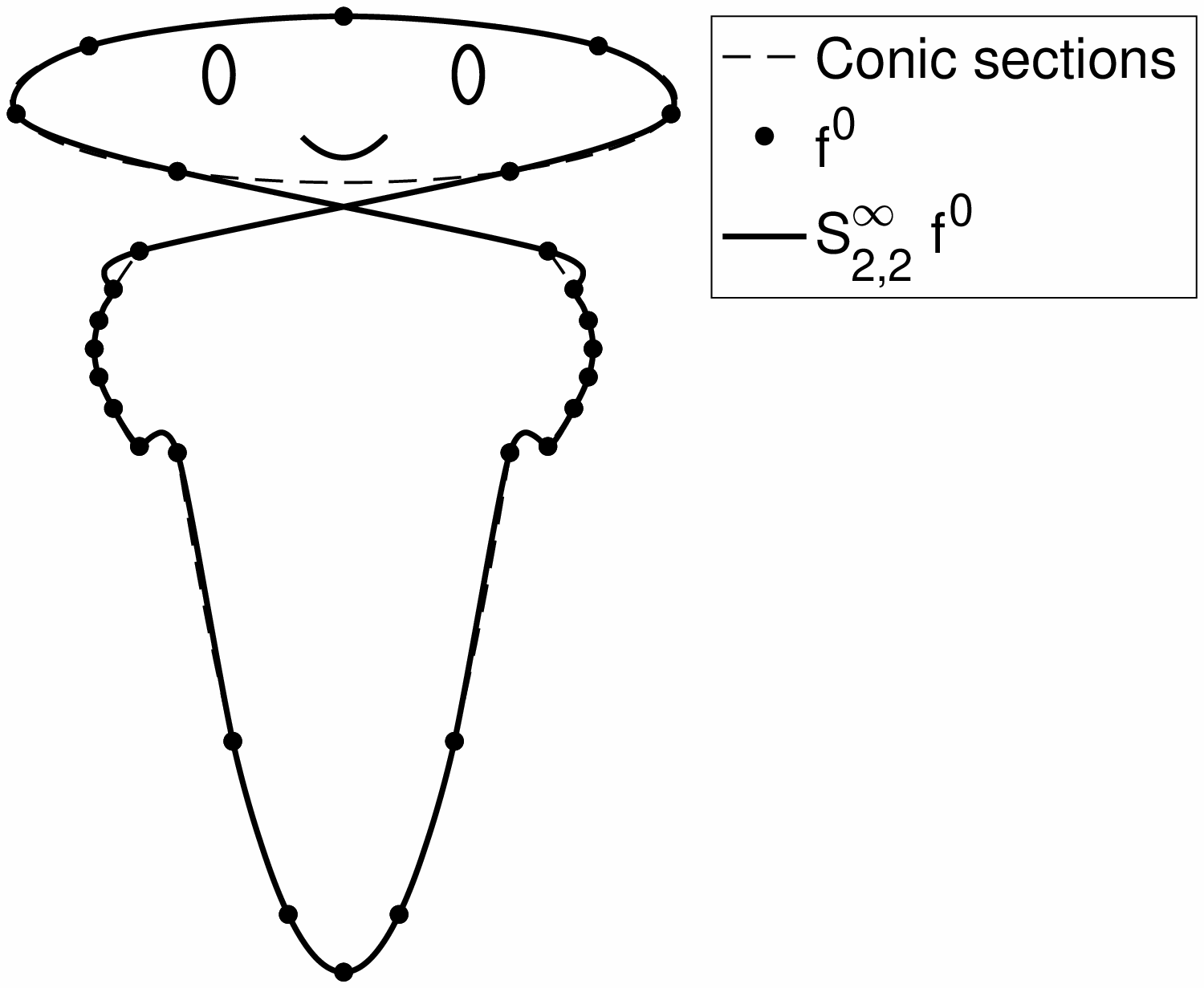}
%\begin{tabular}{c}
%\includegraphics[height=0.08\textheight,
%clip]{rep_mixto_4_2_zoom1.eps} \\[0.5cm] 
%\vspace*{4cm} 
%\includegraphics[height=0.08\textheight, clip]{rep_mixto_4_parabola_zoom1.eps}
%\end{tabular}
\caption{Left plot: Anthropomorphic shape composed of one ellipse, two
  hyperbolas and one parabola. The marked points refer to 
% to compute the precision of $S_\epsilon$ and $T_{2,2}$ in 
Table \ref{tab:repro}. Center plot: $f^0$, black dots. $S^\infty_\epsilon
  f^0$, solid line ($\epsilon=1$). Right plot: $f^0$, black
  dots. $T_{2,2}^\infty f^0$, solid line. The 'exact' conic sections
  are represented with a dashed line in the center and right plots.  \label{fig:combination}}
\end{figure} 
\begin{table}[!h] 
\centering
\begin{tabular}{|c||c|c|c|}
\hline
point/scheme				& Ellipse
&	Hyperbola		&	Parabola

\\\hline \hline
$S_\epsilon$	&	6.6613e-16	&	4.4755e-16	&
2.2204e-16	\\\hline 
$T_{2,2}$		&	2.4467e-02	&	3.0012e-04	&	9.1551e-16	\\\hline
\end{tabular}
\caption{\label{tab:repro}  Error between  $S_\epsilon^\infty f^0$ and
  $T_{2,2}^{\infty}f^0$ and the correct value of each one of the points marked in Figure \ref{fig:combination} left.}
\end{table}

In addition, a non-oscillatory shape is
 obtained  in the {\em transition zones} between two
 conic sections. This behavior  is a
 distinctive feature of our scheme, when compared with its linear
 counterparts.  For the sake of comparison, we also show
 $T_{2,2}^\infty f^0$ in the right plot 
of Figure \ref{fig:combination}. In this case,
only the parabola is exactly reproduced, as confirmed by  Table \ref{tab:repro}. The  oscillatory
behavior in the transition zones can be clearly appreciated.

\subsection{Monotonicity Preservation.  Smoothness of limit functions} \label{sec:numerical:smooth}
We have proven in section \ref{sec:mono} that monotone data is
preserved by $S_\epsilon$ when $ \epsilon \in [0, \sqrt{2}]$. If the data is
strictly monotone, then this feature is also preserved. To check
numerically this property, we consider as a 
test case the monotone data of Table 3 in \cite{ADS16}: 
\begin{equation} \label{eq:monotone_data1}
f^0=(10,10,10,10,10,10.5,10.5,10.5,10.5,15,50,50,50,50,60,85,85,85,85).
\end{equation}
The limit (monotone) function  $S^\infty_\epsilon f^0$ is displayed 
in the left plot of  Figure \ref{fig:monotone} (solid line). For the
sake of comparison, $T_{2,2,}^\infty f^0$ is also shown (dotted
line). The different behavior between both limit functions can be
clearly appreciated in the right plot, which shows a zoom  of the flat
region (between jumps) marked with a rectangle on the left plot. 

\begin{figure}[ht]
\centering
\includegraphics[clip,width = 0.35\textwidth]{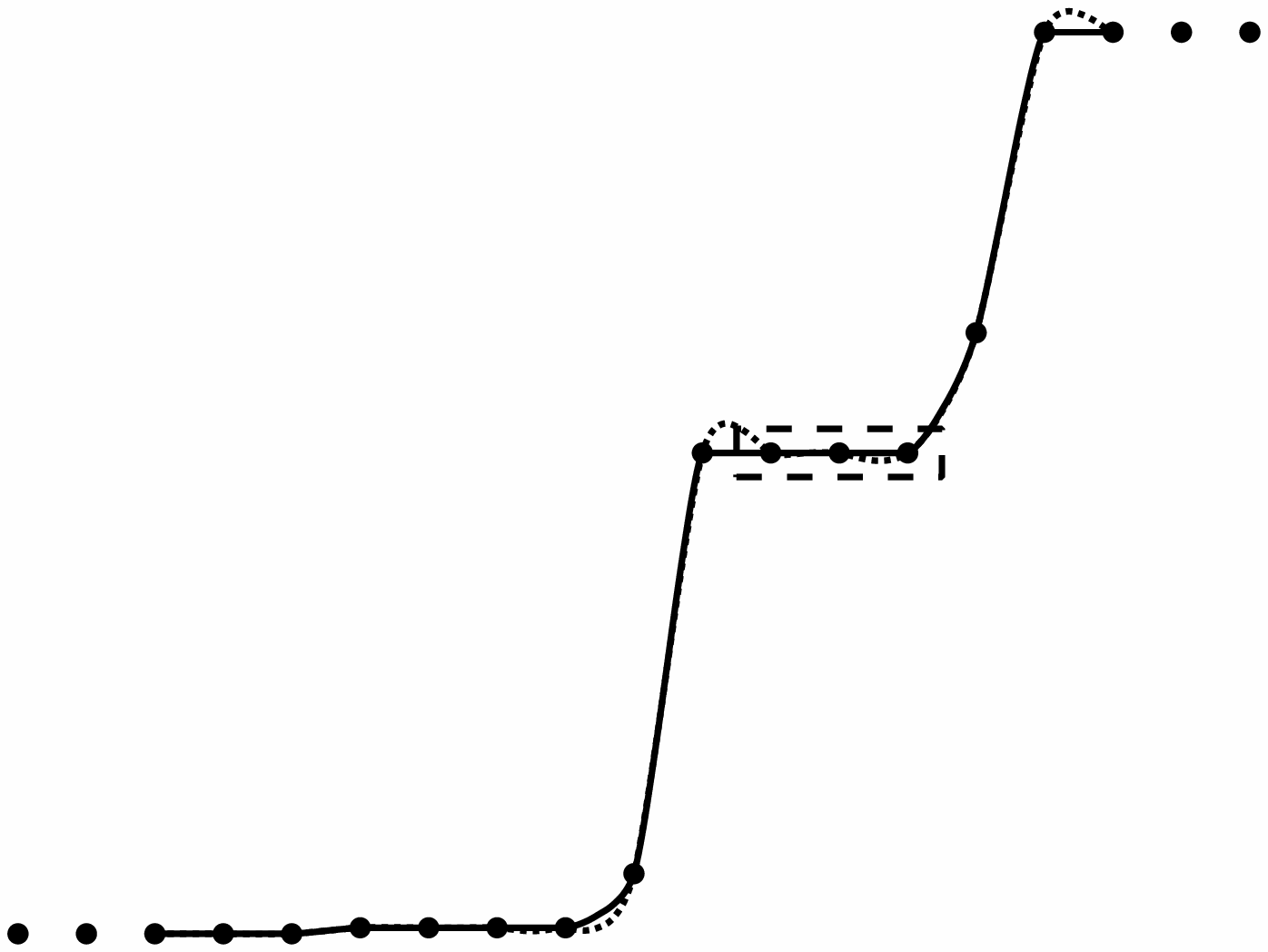} \hspace{20pt}
\includegraphics[clip,width = 0.45\textwidth]{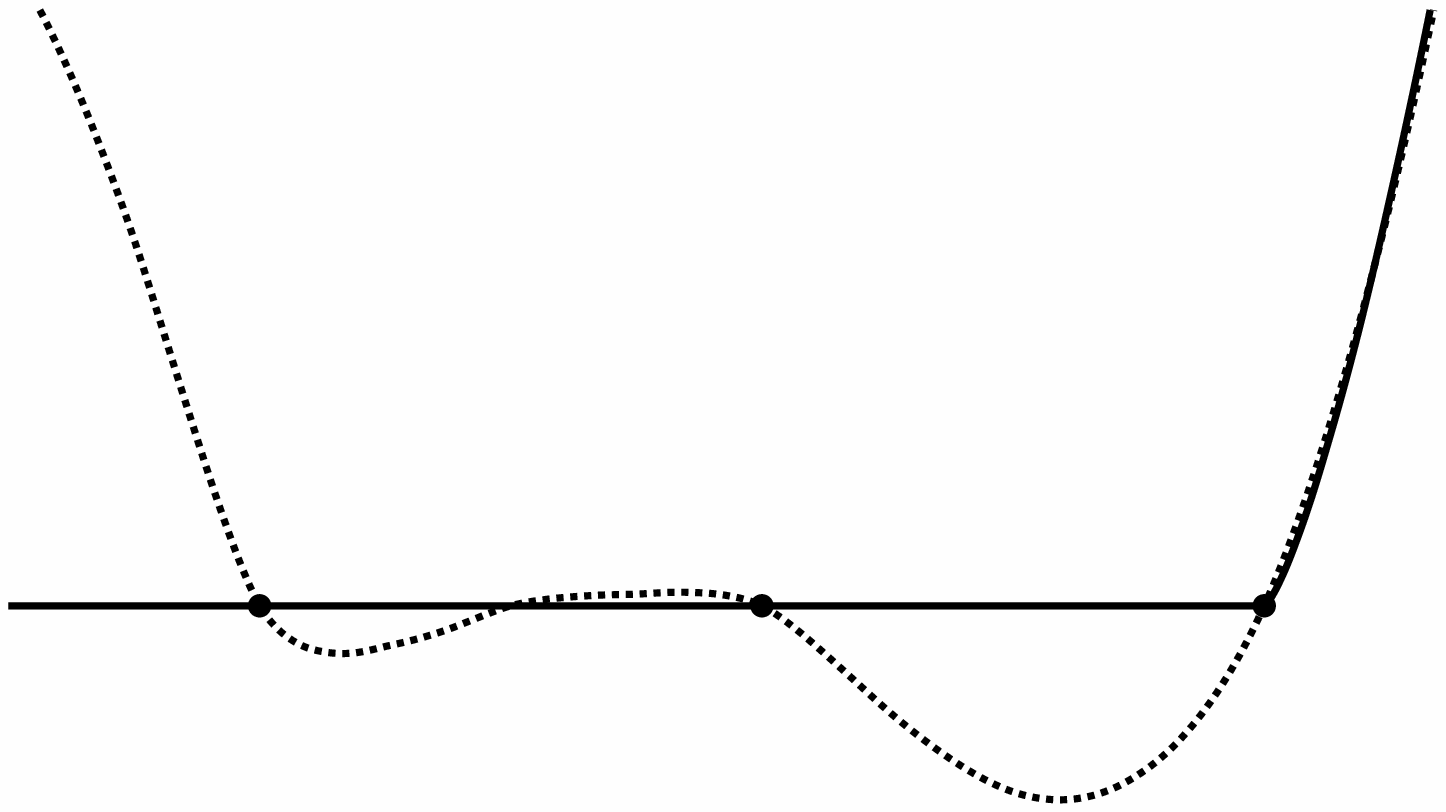}
\caption{Left plot: $T_{2,2}^\infty f^0$ (dotted line) and
  $S_\epsilon^\infty f^0$ (solid line) with  $f^0$ in
  \eqref{eq:monotone_data1}. Right plot: zoom of the area  marked
   with a dashed rectangle on the left plot.
% Using the formula \eqref{eq:smooth:estimation}, the regularity in the monotone situation is $C^{1.34}$, less than in the strictly monotone scenario,  which is $C^{2-}$.
\label{fig:monotone}}
\end{figure}
 
\begin{figure}
\centering
\includegraphics[clip,width = 0.35\textwidth]{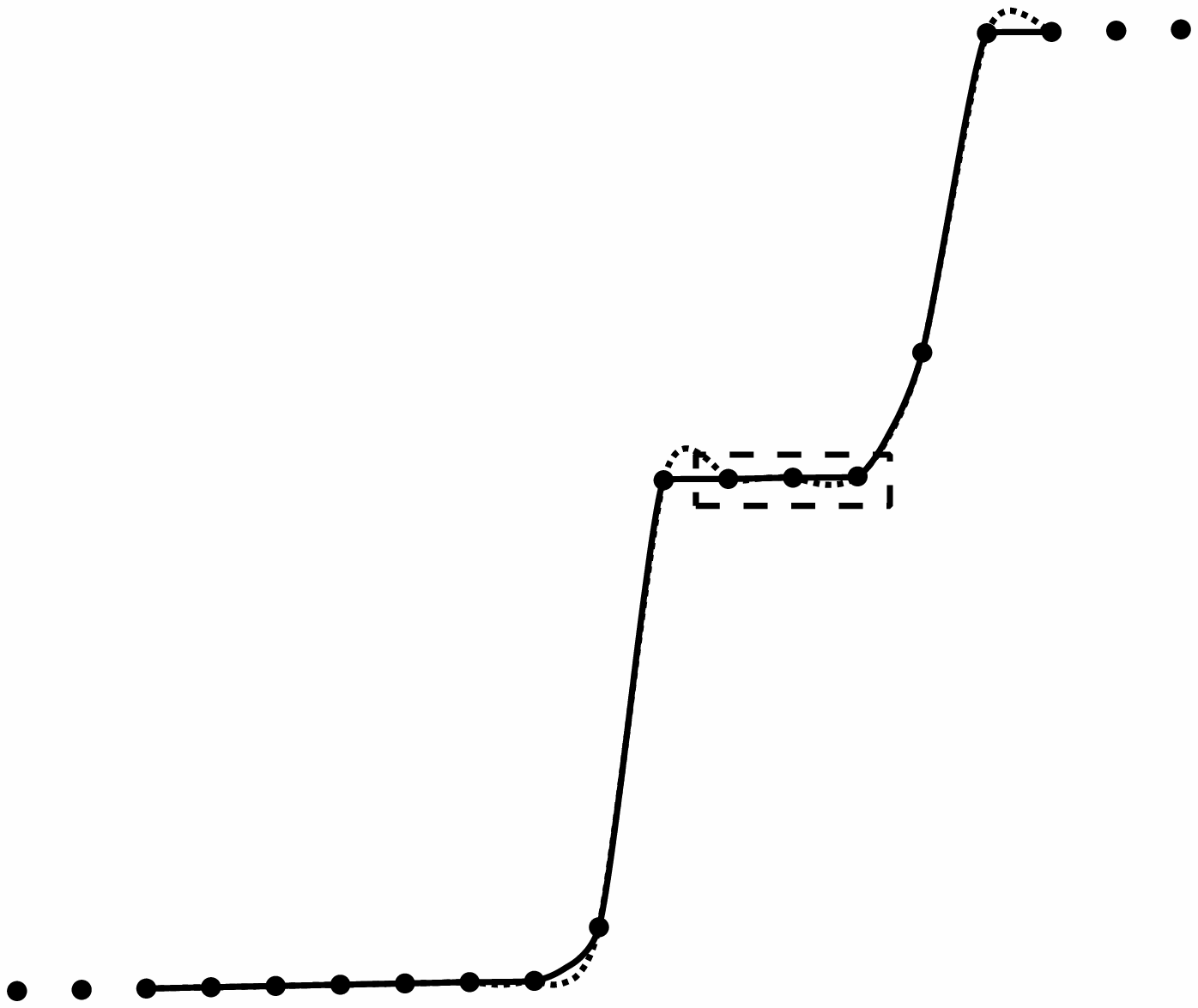} \hspace{20pt}
\includegraphics[clip,width = 0.45\textwidth]{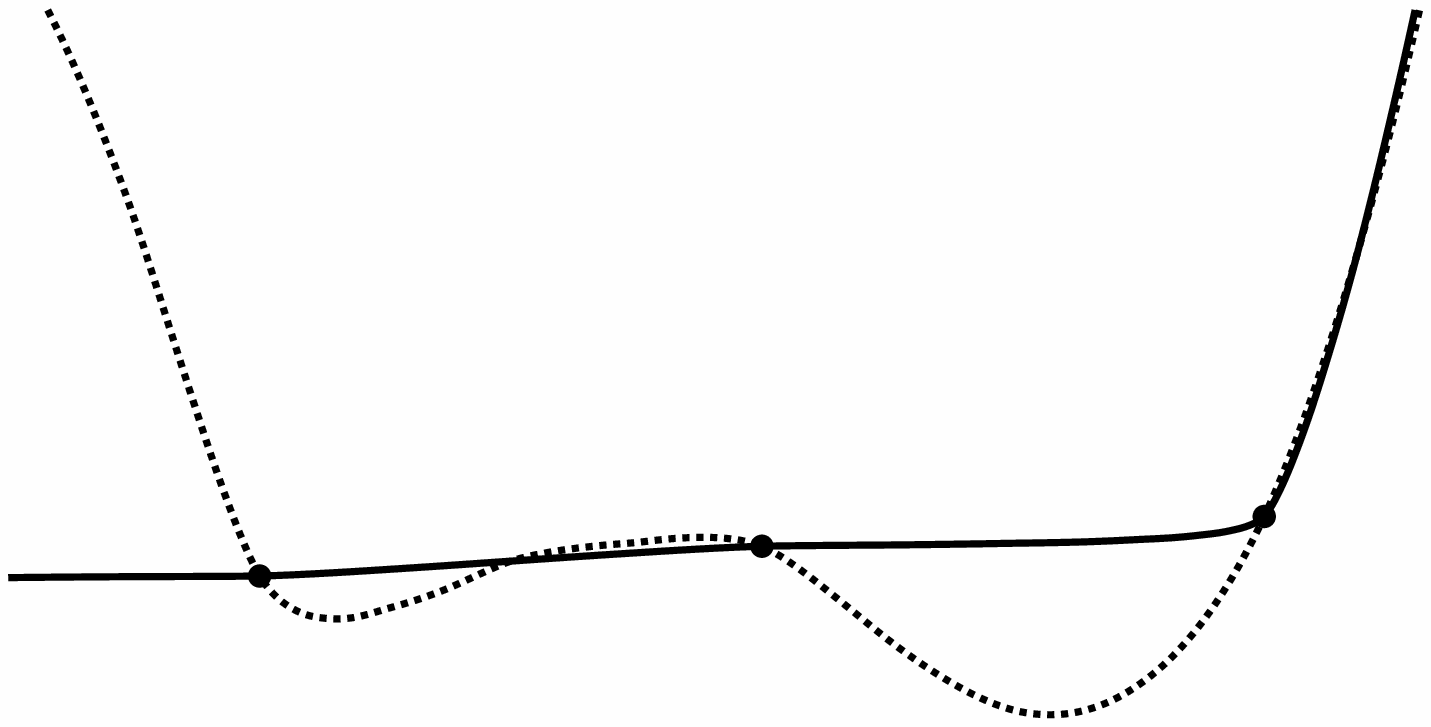}
\caption{Left plot: $T_{2,2}^\infty f^0$ (dotted line) and
  $S_\epsilon^\infty f^0$ (solid line) with  $f^0$ in
  \eqref{eq:monotone_data2}. Right plot: zoom of the area  marked with a dashed rectangle on the left plot.
% Using the formula \eqref{eq:smooth:estimation}, the regularity in the monotone situation is $C^{1.34}$, less than in the strictly monotone scenario,  which is $C^{2-}$.
\label{fig:monotone2}}
\end{figure}

In section \ref{sec:smoothness}, we have been able to prove that when
the data are strictly monotone (and appropriately chosen, see Theorem \ref{teo:smooth}),
the limit function is in fact $\cC^1$. In order to see the possible
differences between the limit functions for monotone and strictly
monotone data, we slightly modify the data in \eqref{eq:monotone_data1} to have the 
following initial set of strictly monotone data
\begin{equation} \label{eq:monotone_data2}
f^0=(10,10.1,10.2,10.3,10.4,10.5,10.6,10.7,10.8,15,50,50.1,50.2,50.3,60,85,85.1,85.2,85.3).
\end{equation}

In Figure \ref{fig:monotone2} we  display  the results corresponding
to $S^\infty_\epsilon f^0$ and $T_{2,2}^\infty f^0$, with the same
convention as in Figure \ref{fig:monotone}. Comparing the right plots
in Figures \ref{fig:monotone} and \ref{fig:monotone2}, it seems
evident that there is a difference in smoothness in both limit functions.
 To get a  numerical estimate of  the smoothness of the limit
 functions, we proceed 
as in \cite{DL-US17,Kuijt98}, 
%we can verify numerically the smoothness of a subdivision scheme by 
%using the following formula:
and compute, in each case, 
\begin{equation*} %\label{eq:smooth:estimation}
\alpha \approx \log_2\left(\frac{\norma{\nabla^n f^{k}}}{\norma{\nabla^n f^{k+1}}}\right),
\end{equation*}
being $n$ a natural number greater than $\alpha$. In this case, $n\geq 2$.

{For the initial data in \eqref{eq:monotone_data1}, we find that
%  $S_\epsilon^\infty f^0 \in \cC^\alpha$ with 
$\alpha\approx 1.34$. On
  the other hand, when $f^0$ is strictly monotone, as in
  \eqref{eq:monotone_data2}, we  get  $\alpha\approx 2$. We have also observed (numerically) this {\em improved}
  smoothness in other situations, for example in reconstructing the
  'heart' shown in Figure \ref{fig:heart}, where the numerical
  estimate gives $\alpha \approx 2$. The right display in this
  figure shows a smooth reconstruction of  initial 'edge' at the
  bottom.  Since
  $T_{2,2}$ is $\cC^{2-}$ we conjecture that this is, in fact, the
  smoothness of the limit functions shown in Figures
  \ref{fig:monotone2} and \ref{fig:heart}.

}

\begin{figure}
\centering
\includegraphics[width=0.3\textwidth, clip]{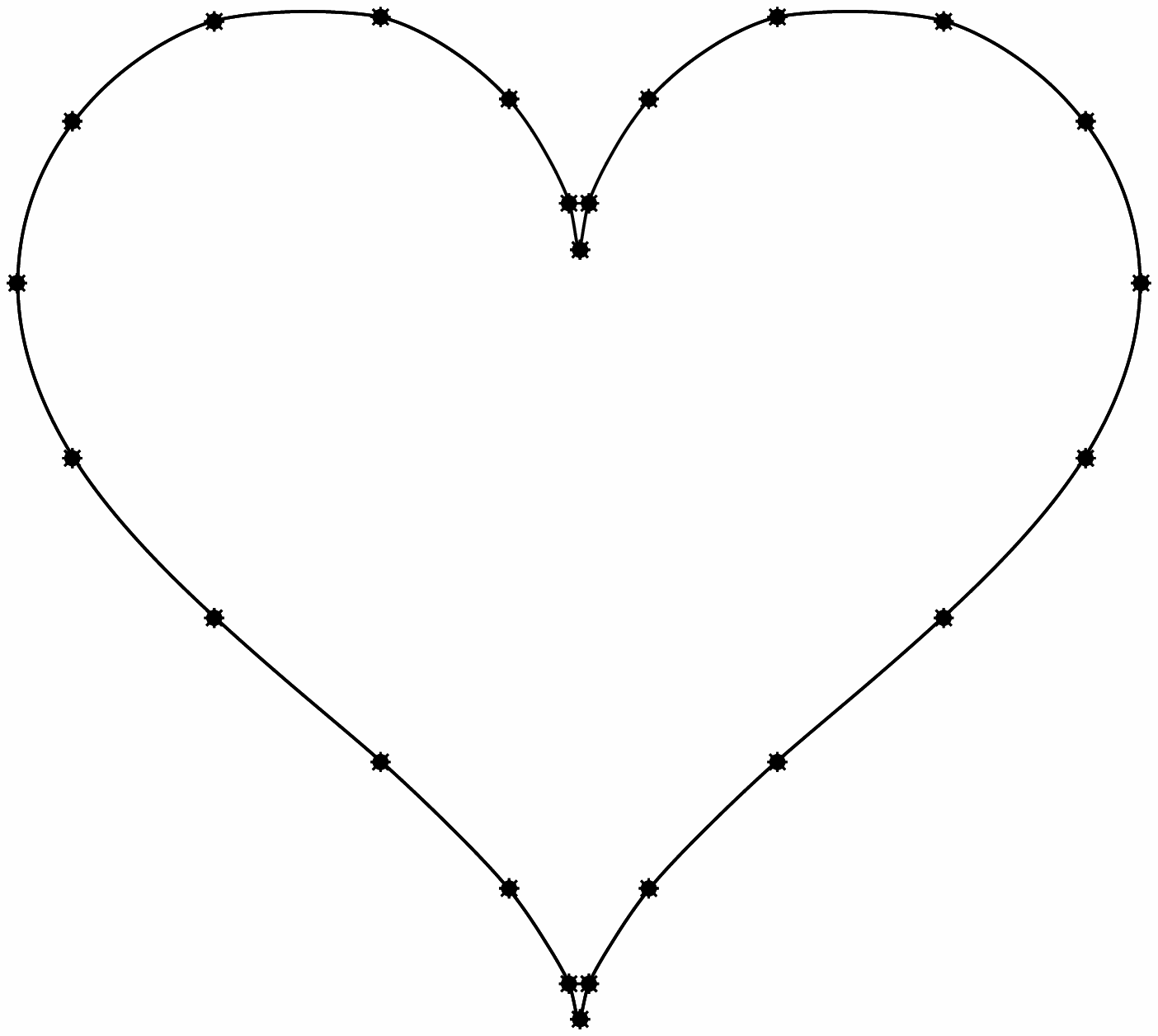}
\includegraphics[width=0.3\textwidth, clip]{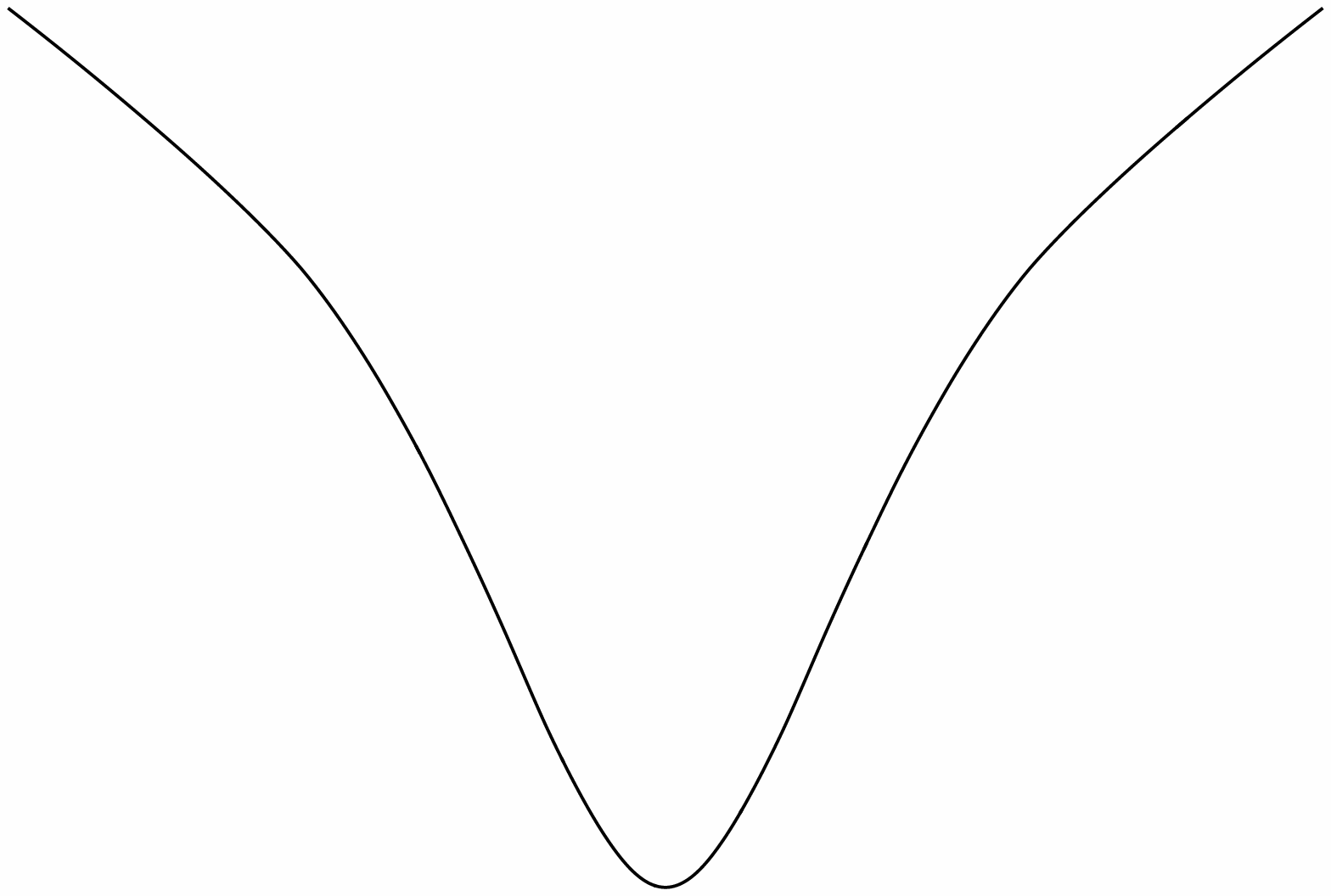}
\caption{Left: The curve generated (line) from the initial sequence (dots) using $S_\epsilon$. Right: A zoom of one edge of the left figure. % The estimation of the regularity of such edge using the formula \eqref{eq:smooth:estimation} is $\cC^{2-}$. 
\label{fig:heart}}
\end{figure}

Through extensive numerical testing, we have observed that the lowest
regularity numerically obtained for $S^\infty_\epsilon f^0$, for $f^0$ quite
arbitrary, is $\cC^\alpha$, with $\alpha \approx 1.34$. In
addition, this regularity seems to occur when considering
 four consecutive data  of the form
%in a non-strictly monotone configuration:
$$ f_{i-1}^k \leq f_i^k = f_{i+1}^k \leq f_{i+2}^k \text{ or } f_{i-1}^k \geq f_i^k = f_{i+1}^k \geq f_{i+2}^k,$$
which is exactly the case  in  Figure \ref{fig:monotone}. In such
situations $(S_\epsilon f^k)_{2i+1} = 
(T_{1,1} f^k)_{2i+1}$. This choice is motivated by the desire to ensure monotonicity, but it seems to
have an adverse effect on the smoothness of the limit function. In any
case, we also recall that, at this moment, only $\cC^1$ smoothness has
been proven (under appropriate restrictions).

\subsection{Approximation order}

%In this section we study the approximation capabilities of the
%trigonometric subdivision scheme. 
We have proven in Theorem \ref{teo:approx}  that the approximation
order of $S_\epsilon$ after 
one iteration is 4, when $f=F|_{h \Z}$ and  $|F'|>\theta>0$. For such
data, we expect  that  stability results of section
\ref{sec:stable}, hence  we also expect that the 'strong'
approximation order is 4.

We check  numerically the approximation order obtained after refining
initial data sampled from the functions $F_1(t)=\exp(-2t^2)$, $F_2(t)=\exp(t)-t$, $t_i^k=i2^{-k}/100$.
% $f^0[k]=(F(i2^{-k}/100))_i$

Since the approximation order depends on
the monotonicity of $F$, as well as the properties of higher derivatives
of $F$,  we  measure the error, 
$$E_{k}:= \sup_{t_i^{k+7}\in[a,b]} \left| (S_\epsilon^7 (F_j(t_l^k))_{l\in\Z})_i - F_j(t_i^{k+7}) \right|, 	\qquad k\geq0,\, j=1,2.$$
at different  intervals $[a,b]$, after
seven applications of $S_\epsilon$. Notice that  when  $F'$ vanishes,
Theorem \ref{teo:approx} does not apply. 

\begin{table}
\centering
\begin{tabular}{|c|c|c|c|c|}\hline
  		& \multicolumn{2}{c|}{$F_1(t)=\exp(-2t^2)$}	& \multicolumn{2}{c|}{$F_2(t)=\exp(t)-t$}  	\\\hline\hline
$k$ & $E_{k}$ & $\log_2(E_{k}/E_{k-1})$ 		& $E_{k}$ &
$\log_2(E_{k}/E_{k-1})$ 	\\\hline \hline
0 &   5.5174e-09 & 				&   6.5725e-10 &			 \\\hline
1 &   3.4488e-10 &   3.9998e+00	&   4.1470e-11 &   3.9863e+00\\\hline
2 &   2.1555e-11 &   4.0000e+00	&   2.6044e-12 &   3.9931e+00\\\hline
3 &   1.3474e-12 &   3.9998e+00	&   1.6298e-13 &   3.9982e+00\\\hline \hline
0 &   3.4257e-09 & 				&   4.6993e-08 &			 \\\hline
1 &   2.1598e-10 &   3.9874e+00	&   5.8667e-09 &   3.0018e+00\\\hline
2 &   1.3557e-11 &   3.9938e+00	&   7.3288e-10 &   3.0009e+00\\\hline
3 &   8.4910e-13 &   3.9970e+00	&   9.1581e-11 &   3.0005e+00\\\hline
\end{tabular}
\caption{The error ($E_k$) and the approximation order
  ($\log_2(E_{k}/E_{k-1})$) of $S_\epsilon$ when approximating $F_1$
  (left) and $F_2$ (right) after 7 iterations. The top tabular region
  corresponds to the monotone region $[a,b]=[-1,-0.3]$. The bottom
  tabular region to the non-monotone region $[a,b]=[-0.4,0.4]$. \label{tab:approx}}
\end{table}
The results are summarized in Table \ref{tab:approx}.  Observe that
$|F_1'|,|F_2'|>\theta>0$ in $[-1,-0.3]$, thus the approximation order
is (shown in Table \ref{tab:approx} top) is 4, as expected. 
On the other hand, $F_1'(0)=0=F_2'(0)$, and the approximation order is
4 and 3, respectively, in the interval $[-0.4,0.4]$ (see Table
\ref{tab:approx} bottom).  To explain the difference, we may observe
the Taylor expansion in the proof of Theorem
\ref{teo:approx}. Different results may be expected depending on the
values of $F'(0),F''(0),F^{(3)}(0)$. Here, $0 = F_1'''(0) \neq
F_1''(0)$ and we obtained order 4,  while $F_2''(0)\neq 0 \neq
F_2^{(3)}(0)$ and we get order 3. 

which reproduces exactly third order polynomials

\section{Conclusions} \label{sec:conclusions}

In \cite{DLL03}, the authors derived non-stationary versions of the 
 classical four point Deslauriers-Dubuc linear scheme $T_{2,2}$. These
 linear schemes 
have the capability to reproduce  exactly the space of exponential
polynomials $\espan{1,t, \exp(\gamma t), \exp(-\gamma t)}$, but the level-dependent rules depend explicitly on the value
of the parameter $\gamma$ that defines the space. In practice,  this
value needs to be estimated  from the
initial data provided by the user. Hence, curves composed of different
conic sections are hard to reproduce using these linear schemes. 

In this paper, we have constructed a family of
nonlinear  schemes based on a nonlinear rule that can be 
considered as {\em stationary representative}  of  the linear, non-stationary
4-point schemes introduced in \cite{DLL03}.
 We 
 show that the schemes in this family reproduce exactly  second order
 polynomials. It also reproduces
 trigonometric  and hyperbolic functions, provided that some
 easily 
verifiable  conditions are fulfilled. We remark that no previous
knowledge on the parameters defining the hyperbolic/trigonometric
functions is required: the same scheme is being applied at all locations
of a curve
composed of different conic sections, obtaining exact reconstruction
away from the transition zones between sections.

We show that the new  schemes can be
written as a nonlinear perturbation of a linear scheme, as  in
\cite{ADL11,ADLT06,ADS16,DL08,DRS04,DL-US17,HO10}. The analysis of convergence,
monotonicity preservation and  
stability of the new schemes uses some of the tools developed in these
references. 

 We remark that the proximity theory \cite{Grohs08,Grohs10general},
 usually applied on manifold data subdivision schemes, cannot be
 applied in our case, because  our schemes do not verify a proximity condition.

In addition, we have shown that, for strictly monotone data and for a certain range
of the parameter that defines the cut-off function $\Gamma_{\epsilon}$, the nonlinear
rules become independent of the  value of this parameter, and are
smooth positive functions. This allowed us to prove that the
corresponding limit functions are $\cC^1$ smooth, provided that a (non
restrictive) technical condition is verified. Some numerical
experiments were carried out to support and validate 
the theoretical results obtained in the paper. 

The setting in this paper is one-dimensional. We   plan to extend
these ideas to define 
a new subdivision scheme, able to reproduce trigonometric functions in a
bivariate setting and  on triangular meshes. 

%In order to prove the smoothness of the limit functions of $S_\epsilon$, our first approach was to consider the
%  proximity theory \cite{Grohs08,Grohs10general}, usually applied on
%  manifold data subdivision schemes. However, we realized that our
%  scheme does not verify a proximity condition. We succeeded applying
%  the ideas exposed in \cite{Kuijt98}, based on the computation of
%  some "ratios" along the subdivision process.

%We observed that the trigonometric scheme have $\cC^{2-}$ regularity except when the $T_{1,1}$ is applied, which means $S_\epsilon$ is being applied on a non-strictly monotone region. In that case, the smoothness is reduced to $\cC^{1.34}$. Observe that it is $\cC^1$ anyway, regardless we did not provide a general proof, but if a always $\cC^{2-}$ regularity is desired, we recommend to substitute $T_{1,1}$ by $T_{2,2}$ in the definition of the trigonometric scheme. However, the non-strictly monotonicity preservation is lost. 
  
%On the other hand, some monotonicity preserving subdivision schemes are non-oscillatory, such as PCHIP \cite{ADS16}, which means that when the initial data have some large gradient, the scheme avoids the generation of unnecessarily oscillations in the limit function, that linear schemes usually do. The trigonometric scheme may generate oscillations because $T_{2,2}$ is used in its piece-wise definition. If a non-oscillatory scheme is wished, we suggest to replace $T_{2,2}$ by $T_{1,1}$ in the definition.

\section*{Acknowledgments}
\noindent
The authors acknowledge support from Project MTM2014-54388 (MINECO, Spain) and the FPU14/02216 grant (MECD, Spain).
We would like to thank the suggestion of Professor Ulrich Reif about the definition in \eqref{eq:GammaE-def}.
%\section*{References}

\bibliography{biblio}{}
\bibliographystyle{plain}

\begin{appendix}
\section{Gradient computations}

This appendix describes  the computation of the gradients that appear in section \ref{sec:smoothness}. We  recall that we are assuming that the data is strictly positive and $\epsilon \in [0, \sqrt{2}]$, hence  the subdivision rules of  $S_\epsilon^{(1)}$ are
\begin{align*}
\Psi_j (x,y,z) &= y +(-1)^j 2\Gamma^{[1]}_\epsilon(x,y,z) (x-z), \qquad j=0,1,
\end{align*}
where $\Gamma^{[1]}_\epsilon$, which does not depend on $\epsilon$, is given in the first row of \eqref{eq:Gamma1+}.

Let us denote $\uno_n = (1,1,\ldots,1)\in\R^{n}$. An easy computation leads to
$$ \nabla \Psi_j (\uno_3) = (0,1,0) + (-1)^j 2\Gamma^{[1]}(\uno_3)(1,0,-1)= (0,1,0) + (-1)^j \frac18 (1,0,-1).$$
Then
$$\nabla \Psi_0(\uno_3) = \left (\frac18,1,-\frac18\right ), 
\qquad \nabla \Psi_1(\uno_3) = \left (-\frac18,1,\frac18\right ).$$

Since $\Psi_0,\Psi_1:\R^3_+ \rightarrow \R$, % whenever $x,y,z> 0$ (by the Remark \ref{rmk:pos_monotone}).
 $G_1$ and $G_2$, in  \eqref{eq:G1},\eqref{eq:G2} are
% composed by several $\Psi_0,\Psi_1$, then they are 
positive and smooth too. Hence their gradients are well-defined. Some details are provided below, and the results are summarized  in Table \ref{tab:appendix:gradient1}. In the computations below, we use that $\Psi_j(\uno_3)=1$, the chain rule and the values obtained for $\nabla \Psi_j(\uno_3)$.

For $ G_1$ we have,
\begin{equation} \label{eq:G1}
G_1(x,y)= \frac{\Psi_1(x,1,y)}{\Psi_0(x,1,y)} \qquad \text{for } x,y>0,  
% R_{2i}^{k+1} = \frac{\Psi_1(r_{i-1}^{k},1,R_i^{k})}{\Psi_0(r_{i-1}^{k},1,R_i^{k})} = G_1(r_{i-1}^{k},R_i^{k})
\qquad  \nabla (x,1,y) = \begin{pmatrix} 1& 0\\ 0& 0\\ 0& 1 \end{pmatrix},
\end{equation}
thus the chain rule leads to 
\begin{align*}
\nabla G_1(\uno_2) &= \frac{1}{\Psi_0(\uno_3)^2}\left (\Psi_0(\uno_3)\nabla \Psi_1(\uno_3)\begin{pmatrix} 1& 0\\ 0& 0\\ 0& 1 \end{pmatrix}- \Psi_1(\uno_3)\nabla \Psi_0(\uno_3)\begin{pmatrix} 1& 0\\ 0& 0\\ 0& 1 \end{pmatrix}\right ) \\
&= \left (\left (-\frac18,1,\frac18\right )- \left (\frac18,1,-\frac18\right )\right )\begin{pmatrix} 1& 0\\ 0& 0\\ 0& 1 \end{pmatrix}
%= (-\frac14,0,\frac14)\begin{pmatrix} 1& 0\\ 0& 0\\ 0& 1 \end{pmatrix}
= \left (-\frac14,\frac14\right ). % \quad \longrightarrow \quad \norma{\nabla G_1(\uno_2)} =\frac12.
\end{align*}
To compute $\nabla G_2(\uno_3)$, where
\begin{equation} \label{eq:G2}
%R_{2i+1}^{k+1} =  \frac{\Psi_0\left (1,\frac{d_{i+1}^k}{d_i^k},\frac{d_{i+2}^k}{d_{i+1}^k}\frac{d_{i+1}^k}{d_i^k}\right )}{\Psi_1\left (\frac{d_{i-1}^k}{d_i^k},1,\frac{d_{i+1}^k}{d_i^k}\right )}, \quad \longrightarrow \quad
 G_2(x,y,z) = \frac{\Psi_0(1,y,yz)}{\Psi_1(x,1,y)},
\end{equation}
we proceed analogously 
$$
 \nabla G_2(\uno_3)=\left (\frac18,1,-\frac18\right ) \begin{pmatrix} 0&0&0\\ 0&1&0\\ 0&1&1 \end{pmatrix} - \left (-\frac18,1,\frac18\right ) \begin{pmatrix} 1&0&0\\ 0&0&0\\ 0&1&0 \end{pmatrix}
 =\left (\frac18,\frac68,-\frac18\right ).
$$
\begin{table}[ht]
\centering
$$\arraycolsep=1pt\def\arraystretch{1.5} \begin{array}{|c|c|c|c|}\hline
 \, \|\Psi_0(\uno_3))\|_1  \, & \,  \|\Psi_1(\uno_3))\|_1  \, & \,  \|\nabla G_1(\uno_2)\|_1  \, & \,  \|\nabla G_2(\uno_3)\|_1\\\hline
5/4 & 5/4 & 1/2 & 1\\\hline
\end{array}$$
\caption{The 1-norms of the gradients of the subdivision rules $\Psi_0$, $\Psi_1$ and the functions $G_1$ and $G_2$. \label{tab:appendix:gradient1}}
\end{table}

To carry out the computations required in Lemma \ref{lem:contractivitat}, we use the following notation: 
The double application of $S_\epsilon$ is determined by
\begin{equation} \label{eq:Psij2}
\Psi^2_j(d_{i-2},d_{i-1},\ldots, d_{i+2}) := (S_\epsilon^{[1]}S_\epsilon^{[1]} d)_{4i+j}, \quad j=0,1,2,3,4,
\end{equation} where: 
\begin{align*}
(S_\epsilon^{[1]}S_\epsilon^{[1]} d)_{4i} &=\Psi_0\Big(\Psi_1(d_{i-2},d_{i-1},d_{i}),\Psi_0(d_{i-1},d_i,d_{i+1}),\Psi_1(d_{i-1},d_i,d_{i+1})\Big) ,\\
(S_\epsilon^{[1]}S_\epsilon^{[1]} d)_{4i+1} &=\Psi_1\Big(\Psi_1(d_{i-2},d_{i-1},d_{i}),\Psi_0(d_{i-1},d_i,d_{i+1}),\Psi_1(d_{i-1},d_i,d_{i+1})\Big) ,\\
(S_\epsilon^{[1]}S_\epsilon^{[1]} d)_{4i+2} &= \Psi_0\Big(\Psi_0(d_{i-1},d_i,d_{i+1}),\Psi_1(d_{i-1},d_i,d_{i+1}),\Psi_0(d_{i},d_{i+1},d_{i+2})\Big),\\
(S_\epsilon^{[1]}S_\epsilon^{[1]} d)_{4i+3} &= \Psi_1\Big(\Psi_0(d_{i-1},d_i,d_{i+1}),\Psi_1(d_{i-1},d_i,d_{i+1}),\Psi_0(d_{i},d_{i+1},d_{i+2})\Big).
\end{align*}
Applying the chain rule and the previous results, we get 
%can compute their gradients:
\begin{align*}
\nabla \Psi^2_0(\uno_5) &= 
%\nabla \Psi_0(\uno_3)\begin{pmatrix}
%\nabla \Psi_1(\uno_3) \,\,\,\, 0 & 0\\
%0  \,\,\,\, \nabla \Psi_0(\uno_3) & 0\\
%0  \,\,\,\, \nabla \Psi_1(\uno_3) & 0 \end{pmatrix} \\
 % &=
(\frac18,1,-\frac18)\begin{pmatrix}
-\frac18 & 1 & \frac18 & 0 & 0\\
 0 & \frac18 & 1 & -\frac18 & 0\\
  0 & -\frac18 & 1 & \frac18 & 0 \end{pmatrix} = (-\frac{1}{64},\frac{17}{64},\frac{57}{64},-\frac{9}{64},0), \\
%\end{align*}
%and analogously
%\begin{align*}
\nabla \Psi^2_1(\uno_5) &= (-\frac18,1,\frac18)\begin{pmatrix}
-\frac18 & 1 & \frac18 & 0 & 0\\
 0 & \frac18 & 1 & -\frac18 & 0\\
  0 & -\frac18 & 1 & \frac18 & 0 \end{pmatrix} = (\frac{1}{64},-\frac{1}{64},\frac{71}{64},-\frac{7}{64},0), \\
\nabla \Psi^2_2(\uno_5) &= (\frac18,1,-\frac18)\begin{pmatrix}
 0 & \frac18 & 1 & -\frac18 & 0\\
  0 & -\frac18 & 1 & \frac18 & 0\\
0 & 0 & \frac18 & 1 & -\frac18 \end{pmatrix} = (0,-\frac{7}{64},\frac{71}{64},-\frac{1}{64},\frac{1}{64}), \\
\nabla \Psi^2_3(\uno_5) &= (-\frac18,1,\frac18)\begin{pmatrix}
 0 & \frac18 & 1 & -\frac18 & 0\\
  0 & -\frac18 & 1 & \frac18 & 0\\
0 & 0 & \frac18 & 1 & -\frac18 \end{pmatrix}  = (0,-\frac{9}{64},\frac{57}{64},\frac{17}{64},-\frac{1}{64}), \\
\nabla \Psi^2_4(\uno_5) &= (\frac18,1,-\frac18)\begin{pmatrix}
0 & -\frac18 & 1 & \frac18 & 0 \\
0 & 0 & \frac18 & 1 & -\frac18 \\
0 & 0 & -\frac18 & 1 & \frac18 \end{pmatrix} = (0,-\frac{1}{64},\frac{17}{64},\frac{57}{64},-\frac{9}{64}).
\end{align*}
Then, for $j=0,1,2,3$, %it is straightforward to compute the gradients of
\begin{equation} \label{eq:Gsup2}
G_j^2(x,y,z,w):=\frac{\Psi^2_{j+1}(x y,y,1,z, z w)}{\Psi^2_j(x y,y,1,z , z w)}, 
\qquad  
\nabla G_j^2(\uno_4) = (\nabla \Psi^2_{j+1}(\uno_5) -\nabla \Psi^2_{j}(\uno_5) )
\begin{pmatrix}
1&1&0&0\\
0&1&0&0\\
0 & 0& 0& 0\\
0&0&1&0\\
0&0&1&1\\
\end{pmatrix} 
\end{equation}
%we can proceed analogously to $G_j$, $j=0,1$:
so that
\begin{align*}
\nabla G^2_{0} (\uno_4) &= (\frac{1}{32},-\frac14,\frac{1}{32},0), & \nabla G^2_{2}(\uno_4) &= (0,-\frac{1}{32},\frac14,-\frac{1}{32}),\\
\nabla G^2_{1}(\uno_4)&= (-\frac{1}{64},-\frac{7}{64},\frac{7}{64},\frac{1}{64}) , & \nabla G^2_{3}(\uno_4) &= (0,\frac18,-\frac12,-\frac18).
\end{align*}
%Hence, greatest gradient is $\|\nabla G^2_{3}(\uno_5)\|_1 = 3/4$.
The relevant results are summarized in  Table \ref{tab:appendix:gradient2}.
%\begin{align*}
%\|\nabla G^2_{0} (\uno_5)\|_1 &= \frac{5}{16}, & \|\nabla G^2_{2}(\uno_5)\|_1 &= \frac{5}{16}\\
%\|\nabla G^2_{1}(\uno_5)\|_1&= \frac14 , & \|\nabla G^2_{3}(\uno_5)\|_1 &= \frac34.
%\end{align*}
\begin{table}[!h]
\centering
\begin{tabular}{|c|c|c|c|c|} \hline
&&&&\\[-12pt]
$j$ & 0 & 1 & 2 & 3 \\\hline
&&&&\\[-10pt]
$\|\nabla G^2_{j}(\uno_4)\|_1$ &  \, 5/16 \, & \, 1/4 \, & \, 5/16 \, & \, 3/4 \, \\[2pt]\hline
\end{tabular}
\caption{The 1-norm of the ratio functions $G^2_j$ for $j=0,1,2,3$. \label{tab:appendix:gradient2}}
\end{table}
\end{appendix}%%

\end{document}